\theoremstyle{plain}
  \newtheorem{theorem}{Theorem}[section]
  \newtheorem{proposition}[theorem]{Proposition}
  \newtheorem{corollary}[theorem]{Corollary}
\theoremstyle{definition}
  \newtheorem{definition}[theorem]{Definition}
  \newtheorem{example}[theorem]{Example}
  \newtheorem{question}[theorem]{Question}
 \theoremstyle{remark}
  \newtheorem{remark}[theorem]{Remark}
\newcommand\qbin[3]{\left[\begin{matrix} #1 \\ #2 \end{matrix} \right]_{#3}}
\newcommand\Hilb{\operatorname{Hilb}}
\newcommand\Hom{\operatorname{Hom}}
\newcommand\Tor{\operatorname{Tor}}
\newcommand\Ind{\operatorname{Ind}}
\newcommand\sgn{\operatorname{sgn}}
\newcommand\wt{\operatorname{wt}}
\newcommand\dist{\operatorname{d}}
\newcommand\row{\operatorname{r}}
\newcommand\val{\operatorname{v}}
\newcommand\maj{\operatorname{maj}}
\newcommand\Des{\operatorname{Des}}
\newcommand\MacSchur{\operatorname{S}}
\newcommand\MacSchurZ{\operatorname{{\bf S}}}
\newcommand\HZ{\operatorname{{\bf H}}}
\newcommand\EZ{\operatorname{{\bf E}}}
\newcommand\AZ{\operatorname{{\bf A}}}
\newcommand\Symm{\mathfrak{S}}
\newcommand\xx{{\mathbf{x}}}
\newcommand\ttt{{\mathbf{t}}}
\newcommand\ZZ{{\mathbb Z}}
\newcommand\FF{{\mathbb F}}
\newcommand\QQ{{\mathbb Q}}
\newcommand\NN{{\mathbb N}}
\newcommand\CC{{\mathbb C}}
\newcommand\Qq{\hat{\mathbb Q}}
\numberwithin{equation}{section}
\title[$(q,t)$-analogues and $GL_n(\FF_q)$]
{$(q,t)$-analogues and $GL_n(\FF_q)$}
\author{Victor Reiner}
\email{reiner@math.umn.edu}
\author{Dennis Stanton}
\email{stanton@math.umn.edu}
\address{School of Mathematics\\
University of  Minnesota\\
Minneapolis, MN 55455}
\dedicatory{To Anders Bj\"orner on his $60^{th}$ birthday.}
\thanks{Authors supported by NSF grants DMS-0601010 and DMS-0503660, respectively.}
\subjclass{05A17}
\keywords{$q$-binomial, $q$-multinomial, finite field, Gaussian coefficient, invariant theory,
Coxeter complex, Tits building, Steinberg character, principal specialization}
\begin{document}

\begin{abstract}
We start with a $(q,t)$-generalization of a binomial coefficient.
It can be viewed as a polynomial in $t$ that depends upon an integer
$q$, with combinatorial interpretations when $q$ is a positive integer, and 
algebraic interpretations when $q$ is the order of a finite field.
These $(q,t)$-binomial coefficients and their interpretations generalize further in two directions, one
relating to column-strict tableaux and Macdonald's ``$7^{th}$ variation'' of
Schur functions, the other relating to permutation statistics and
Hilbert series from the invariant theory of $GL_n(\FF_q)$.
\end{abstract}

\maketitle

\section{Introduction, definition and main results}
\label{intro-section}

\subsection{Definition}

   The usual {\it $q$-binomial coefficient} may be defined by 
\begin{equation}
\label{q-bin-definition}
\qbin{n}{k}{q} := 
\frac{(q;q)_n}{(q;q)_k \cdot (q;q)_{n-k}}
\end{equation}
where $(x;q)_n:=(1-q^0 x)(1-q^1x)\cdots(1-q^{n-1}x)$.
It is a central object in combinatorics, with many 
algebraic and geometric interpretations.  We recall below 
some of these interpretations and informally explain how they generalize to our
main object of study, the {\it $(q,t)$-binomial coefficient}
\begin{equation}
\label{q-t-bin-definition}
\qbin{n}{k}{q,t}:= \frac{n!_{q,t}}{k!_{q,t} \cdot (n-k)!_{q,t^{q^k}}},
\end{equation}
where 
$
n!_{q,t}:=(1-t^{q^n-1})(1-t^{q^n-q})(1-t^{q^n-q^2}) \cdots (1-t^{q^n-q^{n-1}}).
$

If $q$ is a positive integer greater than $1$, the $(q,t)$-binomial coefficient
will be shown in Section~\ref{pascal-section} 
to be a polynomial in $t$ with nonnegative coefficients.
It is not hard to see that it specializes to the 
$q$-binomial coefficient in two limiting cases:
\begin{equation}
\label{two-limits}
\lim_{t \rightarrow 1} \qbin{n}{k}{q,t} = \qbin{n}{k}{q} \quad \text{ and } \quad
\lim_{q \rightarrow 1} \qbin{n}{k}{q,t^{\frac{1}{q-1}}} = \qbin{n}{k}{t}.
\end{equation}

\vskip.1in
\noindent
{\bf Warnings:} This $(q,t)$-binomial coefficient is {\it not} a polynomial
in $q$. The parameters $q$ and $t$ here play very different
roles, unlike the symmetric role played by the variables $q$ and $t$ in the
theory of Macdonald polynomials (see e.g. \cite[Chap. VI]{Macdonald-book}).  Also note that, 
unlike the $q$-binomial coefficient, the 
$(q,t)$-binomial coefficient is {\it not} symmetric in $k$ and $n-k$.

\subsection{A word about philosophy}
\label{philosophy-subsection}

Throughout this paper, there will be $(q,t)$-versions of various combinatorial
numbers having some meaning associated with the symmetric group $\Symm_n$.
These $(q,t)$-numbers will have two specializations to the {\it same} $q$-version or $t$-version,
as in \eqref{two-limits}.  The limit as $t \rightarrow 1$ will generally give
a $q$-version that {\it counts} some objects associated to $GL_n(\FF_q)$ when $q$ is a prime power.
The $q \rightarrow 1$ limit will generally give the {\it Hilbert series}, in the variable $t$,
for some graded vector space associated with the invariant theory
or representation theory of $\Symm_n$.  The unspecialized $(q,t)$-version will
generally be such a Hilbert series, in the variable $t$,
associated with $GL_n(\FF_q)$ when $q$ is a prime power.

One reason for our interest in such Hilbert series interpretations is that
they often give generating functions in $t$ with interesting properties.
One such property is the {\it cyclic sieving phenomenon} \cite{RSW} 
interpreting the specialization at $t$
equal to a $n^{th}$ root-of-unity for the $\Symm_n$ Hilbert series, and the
specialization at $t$ equal to a $(q^n-1)^{th}$ root-of-unity 
for the $GL_n(\FF_q)$ Hilbert series.  
%
As an example \cite[\S 9]{RSW}, the $q$-binomial coefficient in \eqref{q-bin-definition},
when $q$ is specialized to a root-of-unity of order $d$ dividing $n$, 
counts the number of $k$-element subsets of
$\{1,2,\ldots,n\}$ stable under the action of any power of the $n$-cycle $c=(1,2,\ldots,n)$
that shares the same multiplicative order $d$.  Analogously, the $(q,t)$-binomial 
coefficient in \eqref{q-t-bin-definition},
when $t$ is specialized to a root-of-unity of order $d$ 
dividing $q^n-1$, counts the number of $k$-dimensional
$\FF_q$-subspaces of $\FF_{q^n}$ stable under multiplication 
by any element of $\FF_{q^n}^\times$ that shares the same
multiplicative order $d$.

\subsection{Partitions inside a rectangle, and subspaces}

  The binomial coefficient $\binom{n}{k}$ counts $k$-element subsets of a set with $n$
elements, but also counts integer partitions $\lambda$ whose Ferrers diagram fits
inside a $k \times (n-k)$ rectangle.  The usual $q$-binomial coefficient 
$q$-counts the same set of partitions:
\begin{equation}
\label{q-bin-rectangle-interpretation}
\qbin{n}{k}{q} = \sum_{\lambda} q^{|\lambda|}
\end{equation}
where $|\lambda|$ denotes the number partitioned by $\lambda$.  
It will be shown in Section~\ref{combinatorial-interpretations-section}
that the $(q,t)$-binomial has a similar combinatorial interpretation:
\begin{equation}
\label{q-t-bin-rectangle-interpretation}
\qbin{n}{k}{q,t} = \sum_{\lambda} \wt(\lambda;q,t)
\end{equation}
where the sum runs over the same partitions $\lambda$ 
as in \eqref{q-bin-rectangle-interpretation}.  Here $\wt(\lambda; q,t)$ has
simple product expressions, showing that for integers $q \geq 2$
it is a polynomial in $t$ with nonnegative
coefficients, and that 
\begin{equation}
\lim_{t \rightarrow 1} \wt(\lambda; q,t) = q^{|\lambda|}
\quad \text{ and } \quad
\lim_{q \rightarrow 1} \wt(\lambda; q,t^{\frac{1}{q-1}}) = t^{|\lambda|}.
\end{equation}

  When $q$ is a prime power, the $q$-binomial coefficient also counts 
the $k$-dimensional $\FF_q$-subspaces $U$ of an $n$-dimensional $\FF_q$-vector space $V$.
It will be shown in Section~\ref{subspace-section} that \eqref{q-t-bin-rectangle-interpretation}
may be re-interpreted as follows:
\begin{equation}
\label{q-t-subspace-interpretation}
\qbin{n}{k}{q,t} = \sum_{U} t^{s(U)}
\end{equation}
where the summation runs over all such $k$-dimensional subspaces $U$ of $V$, and
$s(U)$ is a nonnegative integer depending upon $U$.

\subsection{Principal specialization of Schur functions}

The usual $q$-binomial coefficient has a known interpretation 
(see e.g. \cite[Exer. I.2.3]{Macdonald-book}, \cite[\S7.8]{Stanley-EC2}) as the
{\it principal specialization} of a {\it Schur function}
$s_\lambda(x_1,x_2,\ldots,x_N)$, in the special case where the partition $\lambda=(k)$
has only one part:
\begin{equation}
\label{q-bin-principal-specialization-interpretation}
\qbin{n}{k}{q} = s_{(k)}(1,q,q^2,\ldots,q^{n-k}).
\end{equation}

It will be shown in Section~\ref{schur-function-section} 
that the $(q,t)$-binomial coefficient is a special case of
a different sort of principal specialization.  In \cite{Macdonald}, Macdonald
defined several variations on Schur functions, and his $7^{th}$ variation is
a family of Schur polynomials $\MacSchur_{\lambda}(x_1,x_2,\ldots,x_n)$ 
that lie in $\FF_q[x_1,\ldots,x_n]$, and are invariant under the action of $G=GL_n(\FF_q)$.  
It turns out that the principal specializations $\MacSchur_{\lambda}(1,t,t^2,\ldots,t^{n-1})$
of his polynomials can be lifted in a natural way from $\FF_q[t]$ to $\ZZ[t]$, giving
a family of polynomials we will denote $\MacSchurZ_{\lambda}(1,t,t^2,\ldots,t^{n-1})$.
When $\lambda$ has a single part $(k)$, one has
\begin{equation}
\label{q-t-bin-principal-specialization-interpretation}
\qbin{n}{k}{q,t} = \MacSchurZ_{(k)}(1,t,t^2,\ldots,t^{n-k}).
\end{equation}

Recall also that Schur functions have a combinatorial interpretation in terms of tableaux, and
hence so do their principal specializations:
\begin{equation}
\label{Schur-principal-specialization-interpretation}
s_{\lambda}(1,q,q^2,\ldots,q^n) 
  = \sum_{T} q^{\sum_i T_i}
\end{equation}
where $T$ runs over all {\it (reverse) column-strict-tableau} $T$
of shape $\lambda$ with entries in $\{0,1,\ldots,n\}$.  
It will be shown in Section~\ref{lattice-paths-section} that 
\begin{equation}
\label{MacSchur-principal-specialization-interpretation}
\MacSchurZ_{\lambda}(1,t,t^2,\ldots,t^n) 
  = \sum_{T} \wt(T;q,t).
\end{equation}
Here $T$ runs over the same set of tableaux as in 
\eqref{Schur-principal-specialization-interpretation},
and $\wt(T; q,t)$ has a simple product expression,
showing that for integers $q \geq  2$ it is a polynomial in $t$ having nonnegative
coefficients, and that 
\begin{equation}
\lim_{t \rightarrow 1} \wt( T ; q,t) = q^{\sum_i T_i}
\quad \text{ and } \quad
\lim_{q \rightarrow 1} \wt( T ; q,t^{\frac{1}{q-1}}) = t^{\sum_i T_i}.
\end{equation}

\subsection{Hilbert series}
As mentioned above, when $q$ is a prime power, 
the $q$-binomial coefficient in \eqref{q-bin-definition} counts the 
points in the {\it Grassmannian} over the finite field $\FF_q$, that is,
the homogeneous space $G/P_k$ where $G:=GL_n(\FF_q)$ and $P_k$ is the parabolic subgroup 
stabilizing a typical $k$-dimensional $\FF_q$-subspace in $\FF_q^n$.  This is related to its alternate
interpretation as the {\it Hilbert series} for a graded ring arising in the invariant
theory of the symmetric group $W=\Symm_n$:
\begin{equation}
\label{q-bin-Hilb-interpretation}
\qbin{n}{k}{t} = \Hilb(\ZZ[\xx]^{W_k}/( \ZZ[\xx]^{W}_+ ),t). 
\end{equation}
Here $\ZZ[\xx]:=\ZZ[x_1,\ldots,x_n]$ is the polynomial algebra, with the usual
action of $W=\Symm_n$ and its {\it parabolic} or {\it Young subgroup} $W_k:=\Symm_k \times \Symm_{n-k}$,
and $( \ZZ[\xx]^{W}_+)$ denotes the ideal within the ring of $W_k$-invariants generated by
the $W$-invariants $\ZZ[\xx]^W_+$ having no constant term.  The original motivation for our
definition of the $(q,t)$-binomial came from its analogous interpretation in \cite[\S 9]{RSW} as
a Hilbert series:
\begin{equation}
\label{q-t-bin-Hilb-interpretation}
\qbin{n}{k}{q,t} = \Hilb(\FF_q[\xx]^{P_k}/( \FF_q[\xx]^{G}_+ ),t),
\end{equation}
where here the polynomial algebra $\FF_q[\xx]:=\FF_q[x_1,\ldots,x_n]$ carries the usual
action of $G=GL_n(\FF_q)$ and its parabolic subgroup $P_k$.

\subsection{Multinomial coefficients}

The above invariant theory interpretations extend naturally from binomial to {\it multinomial} coefficients.
Given an ordered composition $\alpha=(\alpha_1,\ldots,\alpha_\ell)$ of $n$ into nonnegative
parts, one has the {\it multinomial coefficient} $\binom{n}{\alpha}$ counting
cosets $W/W_\alpha$ where $W=\Symm_n$ and $W_\alpha$ is a parabolic/Young subgroup.
Alternatively, one can view $W$ as a Coxeter system with the adjacent transpositions
as generators, and this multinomial coefficient counts
the minimum-length coset representatives for $W/W_\alpha$.  These representatives
$w$ are characterized by the property that the composition $\alpha$ refines the
{\it descent composition} $\beta(w)$ that lists the lengths of the 
maximal increasing consecutive subsequences of the sequence $w=(w(1),w(2),\ldots,w(n))$.

Generalizing the multinomial coefficient is the usual 
{\it $q$-multinomial coefficient} which we recall in Section~\ref{multinomial-section}.  
It is a polynomial in $q$ with nonnegative integer
coefficients, that for prime powers $q$ counts points in the finite {\it partial flag manifold}
$G/P_\alpha$;  here $G=GL_n(\FF_q)$ and $P_\alpha$ is a parabolic subgroup.  One also has these two
interpretations, one algebraic, one combinatorial:
\begin{equation}
\label{q-multinomial-interpretations}
\begin{aligned}
 \qbin{n}{\alpha}{t} \,\, = \,\, \Hilb(\,\, \ZZ[\xx]^{W_\alpha}/( \ZZ[\xx]^{W}_+ )\,\, ,t) 
                      \,\, = \,\, \sum_{\substack{w \in W:\\ \alpha \text{ refines }\beta(w)}} t^{\ell(w)},\\
\end{aligned}
\end{equation}
where $\ell(w)$ denotes the {\it number of inversions} (or {\it Coxeter group length}) of $w$.
We will consider in Section~\ref{multinomial-section}
a {\it $(q,t)$-multinomial coefficient} with two interpretations $q$-analogous to 
\eqref{q-multinomial-interpretations}:
\begin{equation}
\label{q-t-multinomial-interpretations}
\qbin{n}{\alpha}{q,t}  \,\,=  \,\,\Hilb( \,\, \FF_q[\xx]^{P_\alpha}/( \FF_q[\xx]^{G}_+ ) \,\, ,t)
 \,\, = \,\,  \sum_{\substack{w \in W:\\ \alpha \text{ refines }\beta(w)}} \wt(w;q,t).
\end{equation}
Here $\wt(w; q,t)$ has a product expression,
showing that for integers $q \geq 2$ it is a polynomial in $t$ with nonnegative
coefficients, and that 
\begin{equation}
\lim_{t \rightarrow 1} \wt( w ; q,t) = q^{\ell(w)}
\quad \text{ and } \quad
\lim_{q \rightarrow 1} \wt( w ; q,t^{\frac{1}{q-1}}) = t^{\ell(w)}.
\end{equation}

\subsection{Homology representations and ribbon numbers}

The previous interpretations of multinomial coefficients are closely related, 
via {\it inclusion-exclusion}, to what we will 
call the {\it ribbon}, {\it $q$-ribbon}, and {\it $(q,t)$-ribbon numbers} 
for a composition $\alpha$ of $n$:
$$
\begin{aligned}
r_\alpha   &:=| \{w \in W:\alpha = \beta(w)\} |, \\
r_\alpha(q)&:= \sum_{\substack{w \in W:\\ \alpha = \beta(w)}} q^{\ell(w)},\\
r_\alpha(q,t)&:= \sum_{\substack{w \in W:\\ \alpha = \beta(w)}} \wt(w; q,t).\\
\end{aligned}
$$

It is known that the ribbon number $r_\alpha$  has an expression as a 
determinant involving factorials, going back to MacMahon.  
Stanley gave an analogous determinantal expression for the $q$-ribbon number $r_\alpha(q)$ 
involving $q$-factorials. Section~\ref{Macmahon-section} discusses an analogous 
determinantal expression for the $(q,t)$-ribbon number,
involving $(q,t)$-factorials.  

The ribbon number $r_\alpha$ can also be interpreted
as the rank of the only non-vanishing homology group in the $\alpha$-rank-selected subcomplex 
of the {\it Coxeter complex} for $W=\Symm_n$.  This homology carries an interesting and
well-studied $\ZZ W$-module structure that we will call $\chi^\alpha$.
This leads (see Theorem~\ref{hilbdet} and Remark~\ref{cell-repn-remark}) 
to the homological/algebraic interpretation
\begin{equation}
\label{q-multinomial-rep-interpretation}
r_\alpha(t) = \Hilb(\,\, M/\ZZ[\xx]^W_+M\,\, ,t)
\end{equation}
where $M$ is the graded $\ZZ[\xx]^W$-module $\Hom_{\ZZ W}( \chi^\alpha, \ZZ[\xx] )$ 
which is the $W$-intertwiner space between the homology $W$-representation $\chi^\alpha$
and the polynomial ring $\ZZ[\xx]$.

For prime powers $q$, Bj\"orner reinterpreted $r_\alpha(q)$ 
as the rank of the homology in the $\alpha$-rank-selected subcomplex 
of the {\it Tits building} for $G=GL_n(\FF_q)$.
Section~\ref{building-section}
then explains the analogous homological/algebraic interpretation:
\begin{equation}
\label{q-t-multinomial-building-interpretation}
r_\alpha(q,t) = \Hilb(\,\, M/\FF_q[\xx]^G_+M\,\, ,t)
\end{equation}
where $M$ is the graded $\FF_q[\xx]^G$-module $\Hom_{\FF_q G}( \chi_q^\alpha, \FF_q[\xx] )$ 
which is the $G$-intertwiner space between the homology $G$-representation $\chi^\alpha_q$ on
the $\alpha$-rank-selected subcomplex of the Tits building and the polynomial
ring $\FF_q[\xx]$. This last interpretation generalizes work of 
Kuhn and Mitchell \cite{KuhnMitchell}, who dealt with the case where $\alpha=1^n:=(1,1,\ldots,1)$
in order to determine the composition multiplicities of the {\it Steinberg character}
of $GL_n(\FF_q)$ within each graded component of the polynomial algebra $\FF_q[\xx]$.

\subsection{The coincidence for hooks}

In an important special case, there is a coincidence between 
the principal specializations $\MacSchurZ_{\lambda}(1,t,t^2,\ldots)$,
and the $(q,t)$-ribbon numbers  $r_\alpha(q,t)$. Specifically, 
when $\lambda=(m,1^k)$ (a {\it hook}) and $\alpha=(1^k,m)$ (the {\it reverse hook}),
we will show in Section~\ref{hook-section} that for $n \geq k$ one has
\begin{equation}
\label{hook-coincidence-formula}
\MacSchurZ_{\lambda}(1,t,t^2,\ldots,t^n) = \qbin{m+n}{n-k}{q,t} r_\alpha(q,t^{q^{n-k}}).
\end{equation}
In particular, when $k=0$, 
both sides revert to the $(q,t)$-multinomial $\qbin{m+n}{n}{q,t}$, and
when $n=k$, one has the coincidence 
$
\MacSchurZ_{\lambda}(1,t,t^2,\ldots,t^k) = r_\alpha(q,t),
$
generalizing the case $\alpha=1^k$ studied by Kuhn and Mitchell \cite{KuhnMitchell}.

\tableofcontents

\section{Making sense of the formal limits}
\label{limits-section}

Recall the definition of the $(q,t)$-multinomial from \eqref{q-t-bin-definition}:
\begin{equation}
\label{recalled-q-t-binomial-definition}
\qbin{n}{k}{q,t}:= \frac{n!_{q,t}}{k!_{q,t} \cdot (n-k)!_{q,t^{q^k}}} 
= \prod_{i=1}^{k} \frac{1-t^{q^n-q^{i-1}}}{1-t^{q^k-q^{i-1}}} ,
\end{equation}
where 
$
n!_{q,t}:=(1-t^{q^n-1})(1-t^{q^n-q})(1-t^{q^n-q^2}) \cdots (1-t^{q^n-q^{n-1}}).
$
We pause to define here carefully the ring where such generating functions
involving $q$ and $t$ live, and how the various formal limits in the Introduction will make sense.  

Let 
$$
\ttt:=(\ldots,t^{q^{-2}},t^{q^{-1}},t,t^{q^1},t^{q^2},\ldots)
$$
be a doubly-infinite sequence
of {\it algebraically independent} indeterminates, and let
$$
\Qq(\ttt):=\QQ(\ldots,t^{q^{-2}},t^{q^{-1}},t,t^{q^1},t^{q^2},\ldots)
$$
denote the field of rational functions in these indeterminates.
We emphasize that $q$ here is {\it not} an integer, and there is {\it no relation} between the
different variables $t^{q^r}$.  However, they are related by the 
{\it Frobenius operator} $\varphi$ acting invertibly via
$$
\begin{aligned}
\Qq(\ttt) & \overset{\varphi}{\longrightarrow} \Qq(\ttt) \\
  t^{q^r} & \overset{\varphi}{\longmapsto}  t^{q^{r+1}}.   
\end{aligned}
$$
We sometimes abbreviate this by saying $f(t) \overset{\varphi}{\longmapsto} f(t^q)$
and $f(t) \overset{\varphi^{-1}}{\longmapsto} f(t^{\frac{1}{q}})$.

Most generating functions considered in this paper are contained in the
subfield $\Qq(\ttt)_0$ of $\Qq(\ttt)$ generated by all quotients 
$\frac{t^{q^r}}{t^{q^s}}=: t^{q^r-q^s}$ of the indeterminates.  For example,
$$
n!_{q,t}=\left(1-\frac{t^{q^n}}{t}\right)\left(1-\frac{t^{q^n}}{t^{q^1}}\right) 
   \cdots \left(1-\frac{t^{q^n}}{t^{q^{n-1}}}\right)
$$
lies in this subfield $\Qq(\ttt)_0$, and hence so does the $(q,t)$-binomial coefficient.

Because the $t^{q^r}$ are algebraically independent, the homomorphism
$\Qq(\ttt) \rightarrow \QQ(t)$ that sends $t^{q^r} \mapsto t^r$ is well-defined,
and restricts to a homomorphism 
\begin{equation}
\label{q-limit-homomorphism}
\begin{aligned}
\Qq(\ttt)_0 & \longrightarrow \QQ(t) \\
t^{q^r-q^s} = \frac{t^{q^r}}{t^{q^s}} & \longmapsto  t^{r-s}.
\end{aligned}
\end{equation}
It is this homomorphism which makes sense of the
$q \rightarrow 1$ formal limits that appeared in the Introduction:  when we write
$
\lim_{q \rightarrow 1} \left[ F(q,t) \right]_{t \mapsto t^{\frac{1}{q-1}}}
$
for some element $F(q,t) \in \Qq(t)$, we mean by this the element in $\QQ(t)$ which is
the image of $F(q,t)$ under the homomorphism in \eqref{q-limit-homomorphism}.

To make sense of the $t \rightarrow 1$ formal limits, choose a 
positive integer $q \geq 2$.  The field of rational functions 
$\QQ(t)$ in a single variable $t$ lies at the bottom of a tower of field extensions
$$
\QQ(t) \subset \QQ(t^{q^{-1}}) \subset \QQ(t^{q^{-2}}) \subset \cdots 
$$
obtained by adjoining a $q^{th}$ root at each stage.
The union $\bigcup_{r \geq 0} \QQ(t^{q^{-r}})$ is a ring with a
specialization homomorphism from the ring $\Qq(\ttt)$ defined above:
\begin{equation}
\label{q-integer-homomorphism}
\begin{aligned}
\Qq(\ttt) & \longrightarrow \bigcup_{r \geq 0} \QQ(t^{q^{-r}})\\
t^{q^r} & \longmapsto t^{q^r}.
\end{aligned}
\end{equation}
Note that in \eqref{q-integer-homomorphism}, the symbol ``$t^{q^r}$" has two different meanings: 
on the left it is one of the doubly-indexed family
of indeterminates, and on the right it is the $(q^r)^{th}$ power of the variable $t$.
Many of our results will assert that various generating functions $F(q,t)$
in $\Qq(\ttt)$, when specialized as in  \eqref{q-integer-homomorphism}, have image lying in the subring
$\ZZ[t] \subset \bigcup_{r \geq 0} \QQ(t^{q^{-r}})$;  this is what is meant when we
say $F(q,t)$ in $\Qq(\ttt)$ ``is a polynomial in $t$ for integers $q \geq 2$''.
In this situation, we will write $\lim_{t \rightarrow 1} F(q,t)$ for the
result after applying the further evaluation homomorphism $\ZZ[t] \rightarrow \ZZ$
that sends $t$ to $1$.

In Section~\ref{schur-function-section}, we shall be interested in working with
$n$ variables $x_1,\ldots,x_n$, rather than just the single variable $t$.
To this end, define $\Qq(\xx)$ to be the $n$-fold tensor product
$\Qq(\ttt) \otimes \cdots \otimes \Qq(\ttt)$, renaming the variable
$t$ as $x_i$ in the $n^{th}$ tensor factor.
Here the Frobenius automorphism $\varphi$ acts by
$(\varphi f)(x_i)=f(x_1^q,\ldots,x_n^q)$.  There are various
specialization homomorphisms from $\Qq(\xx) \rightarrow \Qq(\ttt)$,
but the one that will be of interest here is the
{\it principal specialization} $\Qq(\xx) \rightarrow \Qq(\ttt)$ sending
$x_i^{q^r} \longmapsto (t^{q^r})^{i-1}$, and abbreviated by
$f(x_1,\ldots,x_n) \longmapsto f(1,t,t^2,\ldots,t^{n-1}).$

\section{Why call it a ``binomial coefficient''?}
\label{why-binomial-section}
We give two reasons for the name ``$(q,t)$-binomial coefficient''.

\subsection{Binomial theorem}


The elementary symmetric function $e_r(x_1,\ldots,x_n)$ can be
defined by the identity 
$
\prod_{i=1}^n (y + x_i)
= \sum_{s=0}^n y^{s} e_{n-s}(\xx)
$
in $\ZZ[y,x_1,\ldots,x_n]$.
When specialized to $x_i = t^{i-1}$ this gives the following
version of the {\it $t$-binomial theorem}:
\begin{equation}
\label{t-binomial-theorem}
\prod_{i=1}^n (y + t^{i-1})
= \sum_{s=0}^n y^{s} e_{n-s}(1,t,t^2,\ldots,t^{n-1})
= \sum_{s=0}^n y^{s} \qbin{n}{s}{t} t^{\binom{n-s}{2}}.
\end{equation}
On the other hand, a special case of Macdonald's 7th variation on Schur
functions, to be discussed in Section~\ref{schur-function-section},
are polynomials $E_r(x_1,\ldots,x_n)$ 
which can be defined by the following identity in
in $\FF_q[x_1,\ldots,x_n, y]$
(see \cite[Chap. I, \S2, Exer. 26, 27]{Macdonald-book}, \cite[\S 7]{Macdonald}):
\begin{equation}
\label{Dickson-definition}
\prod_{\ell(\xx) \in (\FF_q^n)^*} (y + \ell(\xx))
= \sum_{s=0}^n y^{q^s} E_{n-s}(\xx).
\end{equation}
Here the product runs over all $\FF_q$-linear functionals
$\ell(x_1,\ldots,x_n)$ on $\FF_q^n$.   We will later prove
a formula \eqref{qtelem} for the specialization
$x_i=t^{i-1}$ in $E_r(\xx)$, from which \eqref{Dickson-definition}
gives the following identity valid in $\FF_q[t,y]$:
\begin{equation}
\label{less-than-satisfactory-binomial}
\prod_{\ell(\xx) \in (\FF_q^n)^*} (y+\ell(1,t,t^2,\ldots,t^{n-1}))
= \sum_{s=0}^n y^{q^s} \qbin{n}{s}{q,t}
     \prod_{j=1}^{n-s} \frac{t^{q^n}-t^{q^{n-j}}}{t^{q^{s+j}}-t^{q^s}}.
\end{equation}
Although \eqref{less-than-satisfactory-binomial} does not have a 
rigorous limit as $q$ approaches $1$, it can viewed as a $q$-analogue of
the $t$-binomial theorem \eqref{t-binomial-theorem}.

\subsection{Binomial convolution}
Consider three algebras of generating functions.  The first
two are the power series rings $\QQ[[y]]$, $\QQ(q)[[y]]$ in a single variable 
$y$ with coefficients in the field $\QQ$ or in the rational function field 
$\QQ(q)$.  The third is an associative but noncommutative
algebra, isomorphic as $\Qq(\ttt)$-vector space to $\Qq(\ttt)[[y]]$
so that it has a $\Qq(\ttt)$-basis $\{1,y,y^2,\ldots\}$, but with its multiplication 
twisted\footnote{This is the {\it twisted semigroup algebra}
for the mulitplicative semigroup 
$\{1,y,y^2,\ldots\}$, in which the semigroup acts on the coefficients $\Qq(\ttt)$ by 
letting the generator $y$ act as $\varphi$.} as follows:
$$
f(t) y^k \cdot g(t) y^\ell := f(t) \,\, g(t^{q^k}) \,\, y^{k+\ell}.
$$
Each of these three algebras, has a {\it divided power} basis 
$\{ y^{(n)} \}_{n \geq 0}$ as a vector space over
$\QQ$ (resp. $\QQ(q), \Qq(\ttt)$),
defined by
$$
y^{(n)}  :=\frac{y^n}{n!} 
\left(  \text{ resp. }
\frac{y^n}{(q;q)_n}, \quad
\frac{y^n}{n!_{q,t}}
\right).
$$
One can readily check that the various binomials give
the structure constants for multiplication in this basis:
$$
y^{(k)} y^{(\ell)} = \binom{k+\ell}{k} y^{(k+\ell)}
\left( \text{ resp. } 
\qbin{k+\ell}{k}{q} y^{(k+\ell)}, \quad
\qbin{k+\ell}{k}{q,t} y^{(k+\ell)}
\right).
$$
Therefore one has a {\it binomial convolution formula}:
the product $A(y) B(y)$ of two exponential generating functions
$
A(y):=\sum_{k \geq 0} a_k \,\, y^{(k)},
B(y):=\sum_{\ell \geq 0} b_\ell \,\, y^{(\ell)}
$
has its coefficient of $y^{(n)}$ given by
$$
\sum_{k+\ell=n}  \binom{n}{k} a_k \,\, b_\ell
\left( \text{ resp. } 
\sum_{k+\ell=n} \qbin{n}{k}{q} a_k(q) \,\, b_\ell(q), \,\,\,\,
\sum_{k+\ell=n} \qbin{n}{k}{q,t} a_k(t) \,\,b_\ell(t^{q^k})
\right).
$$

\section{Pascal relations}
\label{pascal-section}

Our starting point will be the $(q,t)$-analogue of the two $q$-Pascal relations
for the $q$-binomial; see e.g. \cite[Prop. 6.1]{KacCheung},\cite[\S1.3]{Stanley-EC1}:
\begin{equation}
\label{q-Pascal-relations}
\begin{aligned}
\qbin{n}{k}{q}
&=          &\qbin{n-1}{k-1}{q}\,\,+ &q^k      & \qbin{n-1}{k}{q}\\
\qbin{n}{k}{q}
&=q^{n-k}   &\qbin{n-1}{k-1}{q}+  &         &\qbin{n-1}{k}{q}.
\end{aligned}
\end{equation}

\begin{proposition} 
\label{qtpascal}
If $0\le k\le n$,
\begin{equation}
\label{q-t-Pascal-relations}
\begin{aligned}
\qbin{n}{k}{q,t}
&=          &\qbin{n-1}{k-1}{q,t^q}\,\,+ &t^{q^k-1} &\frac{k!_{q,t^q}}{k!_{q,t}} \qbin{n-1}{k}{q,t^q}\\
\qbin{n}{k}{q,t}
&=t^{q^n-q^k} &\qbin{n-1}{k-1}{q,t^q}+ &        &\frac{k!_{q,t^q}}{k!_{q,t}} \qbin{n-1}{k}{q,t^q}.
\end{aligned}
\end{equation}
\end{proposition}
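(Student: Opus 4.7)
The plan is to verify both Pascal relations by direct manipulation of the product formula
$$\qbin{n}{k}{q,t}=\prod_{i=1}^{k}\frac{1-t^{q^n-q^{i-1}}}{1-t^{q^k-q^{i-1}}}$$
recorded in \eqref{recalled-q-t-binomial-definition}. The key point is that replacing $t$ by $t^q$ has the effect of shifting each internal exponent $q^r$ up by one, so the analogous product expressions for $\qbin{n-1}{k-1}{q,t^q}$ and $\qbin{n-1}{k}{q,t^q}$ sit naturally inside the same ring of rational functions as $\qbin{n}{k}{q,t}$ and can be compared term by term.

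First I would simplify the combination $\frac{k!_{q,t^q}}{k!_{q,t}}\,\qbin{n-1}{k}{q,t^q}$ that appears on the right of both identities. Writing out the factorials gives $k!_{q,t^q}=\prod_{j=1}^{k}(1-t^{q^{k+1}-q^j})$, whose factors cancel the denominator $\prod_{i=1}^{k}(1-t^{q^{k+1}-q^i})$ coming from $\qbin{n-1}{k}{q,t^q}$. The entire combination thus collapses to
$$\prod_{i=1}^{k}\frac{1-t^{q^n-q^i}}{1-t^{q^k-q^{i-1}}},$$
while the shifted-argument substitution directly yields
$$\qbin{n-1}{k-1}{q,t^q}=\prod_{i=1}^{k-1}\frac{1-t^{q^n-q^i}}{1-t^{q^k-q^i}}.$$

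Next I would put each identity over the common denominator $D:=\prod_{i=0}^{k-1}(1-t^{q^k-q^i})$, which is precisely the denominator of the left side. The two right-hand terms then share a factor $\prod_{i=1}^{k-1}(1-t^{q^n-q^i})$, and pulling it out leaves, inside a bracket, exactly
$$(1-t^{q^k-1})+t^{q^k-1}(1-t^{q^n-q^k})$$
for the first Pascal relation, and
$$t^{q^n-q^k}(1-t^{q^k-1})+(1-t^{q^n-q^k})$$
for the second. Both are instances of the elementary one-line identity $(1-a)+a(1-b)=1-ab$ with $a=t^{q^k-1}$ and $b=t^{q^n-q^k}$, so each collapses to $1-t^{q^n-1}$. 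Restoring the pulled-out factor turns the numerator into $\prod_{i=0}^{k-1}(1-t^{q^n-q^i})$, which together with the denominator $D$ reproduces $\qbin{n}{k}{q,t}$.

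The main obstacle is purely bookkeeping: one must keep careful track of how the Frobenius substitution $t\mapsto t^q$ reindexes each product (via $q^r\mapsto q^{r+1}$) and of the off-by-one in the ranges of the products for $\qbin{n-1}{k-1}{q,t^q}$ versus $\qbin{n-1}{k}{q,t^q}$. Once these are lined up correctly, the verification reduces to the single elementary identity above, so there is no conceptual difficulty.
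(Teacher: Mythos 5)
Your proof is correct and takes essentially the same route as the paper: a direct verification from the definition, reducing in the end to the single elementary cancellation $b(1-a)+(1-b)=1-ab=(1-a)+a(1-b)$ with $a=t^{q^k-1}$, $b=t^{q^n-q^k}$ (the paper reaches the same identity after pulling out common factorial factors rather than expanding the explicit product over $i$). You also verify both Pascal relations explicitly, whereas the paper only checks the second; that is a minor completeness improvement but not a different argument.
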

\begin{proof}
Both relations are straightforward to check;  we check here only the second.
We will make frequent use of the fact that 
\begin{equation}
n!_{q,t}= (1-t^{q^n-1}) \cdot (n-1)!_{q,t^q}.
\end{equation}

Starting with the right side of the second relation in \eqref{q-t-Pascal-relations}, one checks
$$
\begin{aligned}
& t^{q^n-q^k} \qbin{n-1}{k-1}{q,t^q}+ \frac{k!_{q,t^q}}{k!_{q,t}} \qbin{n-1}{k}{q,t^q} \\
&=t^{q^n-q^k} \frac{(n-1)!_{q,t^q}}{(k-1)!_{q,t^q}(n-k)!_{q,t^{q^k}}}
                  \quad + \quad \frac{k!_{q,t^q}}{k!_{q,t}}  \cdot 
                      \frac{(n-1)!_{q,t^q}}{k!_{q,t^q}(n-k-1)!_{q,t^{q^{k+1}}}}\\
&=\frac{(n-1)!_{q,t^q}}{(k-1)!_{q,t^q}(n-k-1)!_{q,t^{q^{k+1}}}}
     \left ( \frac{t^{q^n-q^k}}{1-t^{q^n-q^k}}   + \frac{1}{1-t^{q^k-1}} \right)\\
&=\frac{(n-1)!_{q,t^q}}{(k-1)!_{q,t^q}(n-k-1)!_{q,t^{q^{k+1}}}}
     \cdot \frac{1-t^{q^n-1}}{(1-t^{q^n-q^k})(1-t^{q^k-1})} 
=\qbin{n}{k}{q,t}
\end{aligned}
$$
\end{proof}

Note that the quotient $\frac{k!_{q,t^q}}{k!_{q,t}}$ appearing in both $(q,t)$-Pascal relations  
\eqref{q-t-Pascal-relations} factors as follows:
\begin{equation}
\label{factorial-quotient}
\frac{k!_{q,t^q}}{k!_{q,t}} = [q]_{t^{q^k-1}} [q]_{t^{q^k-q}}\cdots [q]_{t^{q^k-q^{k-1}}}
\end{equation}
where $[N]_t:=\frac{1-t^N}{1-t}=1+t+t^2+\cdots+t^{N-1}$.
Consequently, if $q$ is a positive integer, 
this quotient is a polynomial in $t$ with nonnegative coefficients.

\begin{corollary}
\label{qtbinpos}
If $q\ge 2$ is an integer, then $\qbin{n}{k}{q,t}$ is a polynomial in $t$ with 
nonnegative coefficients.
\end{corollary}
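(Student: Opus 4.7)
The plan is to prove this by induction on $n$, using either of the $(q,t)$-Pascal relations from Proposition~\ref{qtpascal}. The two boundary cases $k=0$ and $k=n$ both give $\qbin{n}{k}{q,t}=1$, immediately from the definition \eqref{recalled-q-t-binomial-definition}, so these serve as base cases (and the value $n=0$ is then covered). Adopt the standard convention that $\qbin{m}{j}{q,t}=0$ when $j<0$ or $j>m$, so that the Pascal relations remain valid at the edges.

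For the inductive step with $1\le k\le n-1$, I would use, say, the first Pascal relation
$$
\qbin{n}{k}{q,t}=\qbin{n-1}{k-1}{q,t^q}+t^{q^k-1}\,\frac{k!_{q,t^q}}{k!_{q,t}}\,\qbin{n-1}{k}{q,t^q}
$$
and argue that each term on the right lies in $\NN[t]$. By the inductive hypothesis, both $\qbin{n-1}{k-1}{q,s}$ and $\qbin{n-1}{k}{q,s}$ are polynomials in the variable $s$ with nonnegative integer coefficients. Substituting $s=t^q$ (where $q\ge 2$ is an integer) therefore produces polynomials in $t$ with nonnegative coefficients. The prefactor $t^{q^k-1}$ is just a nonnegative power of $t$, since $q^k\ge 1$.

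The remaining ingredient is the factor $\tfrac{k!_{q,t^q}}{k!_{q,t}}$, and here the observation made in the paper just before the corollary is exactly what is needed: by \eqref{factorial-quotient},
$$
\frac{k!_{q,t^q}}{k!_{q,t}}=[q]_{t^{q^k-1}}\,[q]_{t^{q^k-q}}\cdots[q]_{t^{q^k-q^{k-1}}},
$$
and each factor $[q]_{t^{q^k-q^i}}=1+t^{q^k-q^i}+t^{2(q^k-q^i)}+\cdots+t^{(q-1)(q^k-q^i)}$ is a polynomial in $t$ with nonnegative integer coefficients (the exponents $q^k-q^i$ are nonnegative integers for $0\le i\le k-1$). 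Products and sums of elements of $\NN[t]$ lie in $\NN[t]$, so both summands, and hence $\qbin{n}{k}{q,t}$, lie in $\NN[t]$.

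No step looks like a genuine obstacle; the only care needed is bookkeeping, namely (i) noting that the inductive hypothesis applied in the variable $s=t^q$ still yields a nonnegative polynomial in $t$ when $q\in\ZZ_{\ge 2}$, and (ii) invoking the explicit factorization \eqref{factorial-quotient} to see positivity of the quotient of factorials. Both are essentially already recorded in the preceding discussion, so the corollary follows immediately from Proposition~\ref{qtpascal}.
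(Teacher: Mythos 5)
Your proof is correct and follows exactly the route the paper takes: induction on $n$ via one of the $(q,t)$-Pascal relations of Proposition~\ref{qtpascal}, together with the factorization \eqref{factorial-quotient} of the quotient $k!_{q,t^q}/k!_{q,t}$ into the $q$-integers $[q]_{t^{q^k-q^i}}$. The paper states this in one line; you have simply filled in the base cases and the bookkeeping around the substitution $s\mapsto t^q$, both of which are straightforward and correct.
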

\begin{proof} This follows by induction on $n$ 
from either of the $(q,t)$-Pascal relations, using \eqref{factorial-quotient}.
\end{proof}

\begin{remark}
Note that taking the formal limit as $t \rightarrow 1$
in {\it either} of the $(q,t)$-Pascal 
relations \eqref{q-t-Pascal-relations} leads to the {\it same} $q$-Pascal relation,
namely the first one in \eqref{q-Pascal-relations}. 
On the other hand, replacing $t$ with $t^{\frac{1}{q-1}}$ and
then taking the other formal limit as $q \rightarrow 1$ 
in the two different $(q,t)$-Pascal relations \eqref{q-t-Pascal-relations} leads to the
{\it two different} $q$-Pascal relations in \eqref{q-Pascal-relations}.
\end{remark}

\section{Combinatorial interpretations}
\label{combinatorial-interpretations-section}

Our goal here is to explain the combinatorial interpretation
for the $(q,t)$-binomial described in \eqref{q-t-bin-rectangle-interpretation}:
$$
\qbin{n}{k}{q,t} = \sum_{\lambda} \wt(\lambda;q,t)
$$
where the sum runs over partitions $\lambda$ whose Ferrers diagram fits inside
a $k \times (n-k)$ rectangle, that is, $\lambda$ has at most $k$ parts,
each of size at most $n-k$.

\subsection{A product formula for $\wt(\lambda;q,t)$}

The weight $\wt(\lambda; q,t)$ actually depends upon the number of rows $k$ of the
rectangle in which $\lambda$ is confined, or alternatively, upon the number of
parts of $\lambda$ if one counts parts of size $0$.  To emphasize this dependence
upon $k$, we redefine the notation $\wt(\lambda;q,t):=\wt(\lambda,k;q,t)$.
This weight will be defined as a product over the cells $x$ in the Ferrers diagram for $\lambda$
of a contribution $\wt(x,\lambda, k;q,t)$ for each cell.  To this end, first define an exponent 
$$
e_k(x):=q^{\row(x)+\dist(x)}-q^{\dist(x)}
$$ 
where $\row(x)$ is the index of the row containing $x$, and 
$\dist(x)$ is the {\it taxicab} (or {\it Manhattan}, or $L^1$-) distance
from $x$ to the bottom left cell of the rectangle, that is, the cell
in the $k^{th}$ row and first column. Alternatively, 
$\dist(x)={\mathrm{content}}(x)+k-1$, 
where the content of a cell $x$ is $j-i$ if $x$ lies in row $i$ and column $j$.
Then
\begin{equation}
\label{lambda-weight-definition}
\begin{aligned}
\wt(x,\lambda,k;q,t) &:=
\begin{cases}
t^{e_k(x)} [q]_{t^{e_k(x)}} 
    & \text{ if }x\text{ is the bottom cell of }\lambda\text{ in its column}.\\
[q]_{t^{e_k(x)}}
    & \text{ otherwise.}\\
\end{cases}\\
\wt(\lambda,k; q,t) &:= 
 \prod_{x \in \lambda} \wt(x,\lambda, k;q,t).
\end{aligned}
\end{equation}

\begin{example}
\label{partition-weight-example}
Let $n=10$ and $k=4$, so $n-k=6$.  Let $\lambda=(4,3,1,0)$ inside the $4 \times 6$ rectangle.
The cells $x$ of the Ferrers diagram for $\lambda$ are labelled
with the value $d(x)$ below
$$
\begin{array}{rrrrrr}
3     & 4     & 5     & 6     & \cdot & \cdot \\
2     & 3     & 4     & \cdot & \cdot & \cdot\\
1     & \cdot & \cdot & \cdot & \cdot & \cdot\\
\cdot & \cdot & \cdot & \cdot & \cdot & \cdot\\
\end{array}
$$
and $\wt(\lambda,k;q,t)$ is the product of the corresponding factors 
$\wt(x,\lambda,k;q,t)$ shown below:
$$
\begin{array}{rrrrrrr}
&[q]_{t^{q^4-q^3}}             & [q]_{t^{q^5-q^4}}              & [q]_{t^{q^6-q^5}}          & t^{q^7-q^6} [q]_{t^{q^7-q^6}}  \\
&[q]_{t^{q^4-q^2}}             &  t^{q^5-q^3} [q]_{t^{q^5-q^3}} & t^{q^6-q^4} [q]_{t^{q^6-q^4}} & \cdot \\
&t^{q^4-q^1} [q]_{t^{q^4-q^1}} & \cdot                          & \cdot & \cdot \\
&\cdot & \cdot & \cdot & \cdot 
\end{array}
$$
\end{example}
One can alternatively define $\wt(\lambda, k; q,t)$ recursively.
Say that $\lambda$ fitting inside a $k \times (n-k)$ rectangle
has a {\it full first column} if $\lambda$ has $k$ nonzero parts; in this case,
let $\hat\lambda$ denote the partition inside a $k \times (n-k-1)$ rectangle
obtained by removing this full first column from $\lambda$.  

\begin{proposition}
\label{lambda-weight-recurrence}
One can characterize $\wt(\lambda, k; q,t)$ by the recurrence
$$
\wt(\lambda, k; q,t) =
\begin{cases}
t^{q^k-1}\frac{k!_{q,t^q}}{k!_{q,t}} 
  \wt(\hat\lambda, k; q,t^q) & \text{ if }\lambda\text{ has a full first column,}\\
\wt(\lambda, k-1; q,t^q)   & \text{ otherwise,}
\end{cases}
$$ 
together with the initial condition $\wt(\varnothing,k;q,t):=1.$
\end{proposition}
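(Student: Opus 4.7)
The plan is to verify the recurrence directly from the product definition \eqref{lambda-weight-definition}, splitting into the two cases and tracking how the exponent $e_k(x) = q^{\row(x)+\dist(x)} - q^{\dist(x)}$ transforms under the relevant operations. The initial condition $\wt(\varnothing, k; q, t) = 1$ is immediate from the empty product, so establishing the two recursive cases suffices, since they determine $\wt(\lambda, k; q, t)$ inductively on the number of columns of $\lambda$.

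The key observation, using $\dist(x) = \mathrm{content}(x) + k - 1$, is that both operations (i) removing a full first column from $\lambda$, and (ii) decrementing $k$ to $k-1$ (when $\lambda$ has at most $k-1$ nonzero parts), preserve $\row(x)$ while decreasing $\dist(x)$ by exactly one: in (i) because $\mathrm{content}(x)$ decreases by $1$, and in (ii) because the additive constant $k-1$ does. Consequently, in both settings $e_k^\lambda(x)$ gets rescaled to $e_k^\lambda(x)/q$, so the substitution $t \mapsto t^q$ restores the same value of $t^{e_k(x)}$, and the per-cell weights match. The property of being the bottom cell of its column is likewise preserved by both operations.

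Case 2 (no full first column) is then immediate: observation (ii) applied cell by cell gives $\wt(\lambda, k; q, t) = \wt(\lambda, k-1; q, t^q)$. For Case 1 (full first column), observation (i) identifies the product over cells outside the first column with $\wt(\hat\lambda, k; q, t^q)$. The first column itself consists of the cells $(i, 1)$ for $i = 1, \ldots, k$, with exponents $e_k(x) = q^k - q^{k-i}$; only the bottom cell $(k, 1)$ contributes the extra $t^{e_k(x)}$ factor, namely $t^{q^k - 1}$. Re-indexing the remaining product $\prod_{i=1}^{k} [q]_{t^{q^k - q^{k-i}}}$ and invoking the factorization \eqref{factorial-quotient} identifies it with $\frac{k!_{q, t^q}}{k!_{q, t}}$, yielding the stated formula.

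The main obstacle is purely bookkeeping: one must carefully verify that both the column-removal and $k$-decrement operations rescale $e_k^\lambda(x)$ by exactly the factor $1/q$, so that $t \mapsto t^q$ is precisely the correction needed to match the per-cell weights. Once this scaling is pinned down, both cases reduce to a routine identification of products.
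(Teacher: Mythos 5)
Your proof is correct and is exactly the routine verification the paper has in mind when it says the proposition is "straightforward from the definition and equation \eqref{factorial-quotient}." The key scaling observation---that removing the first column or decrementing $k$ each lower $\dist(x)$ by $1$ (with $\row(x)$ fixed), hence divide $e_k(x)$ by $q$, which the substitution $t \mapsto t^q$ undoes---is the right bookkeeping, and the identification of the first-column product with $t^{q^k-1}\,k!_{q,t^q}/k!_{q,t}$ via \eqref{factorial-quotient} completes the argument.
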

\begin{proof}
This is straightforward from the definition \eqref{lambda-weight-definition} and
equation \eqref{factorial-quotient}.
\end{proof}

\begin{theorem}
\label{box-interpretation-theorem}
With the $(q,t)$-binomial defined as in \eqref{q-t-bin-definition}
and $\wt(\lambda,k; q,t)$ defined as above, one has
$$
\qbin{n}{k}{q,t} = \sum_{\lambda} \wt(\lambda , k;q,t)
$$
where the sum ranges over all partitions $\lambda$ whose Ferrers diagram fits inside
a $k \times (n-k)$ rectangle.

Furthermore,
\begin{equation}
\lim_{t \rightarrow 1} \wt(\lambda, k; q,t) = q^{|\lambda|}
\quad \text{ and } \quad
\lim_{q \rightarrow 1} \wt(\lambda, k; q,t^{\frac{1}{q-1}}) = t^{|\lambda|}.
\end{equation}
\end{theorem}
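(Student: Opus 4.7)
The plan is to prove the identity $\qbin{n}{k}{q,t} = \sum_\lambda \wt(\lambda, k; q, t)$ by induction on $n$, using the recursion for $\wt(\lambda, k; q, t)$ from Proposition~\ref{lambda-weight-recurrence} together with the first $(q,t)$-Pascal relation from Proposition~\ref{qtpascal}. The two limit assertions will then follow directly from the product formula \eqref{lambda-weight-definition}.

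For the induction, the base case is $k=0$ (or $k=n$), where only $\lambda=\varnothing$ fits in the rectangle and both sides equal $1$. For the inductive step, I split the partitions $\lambda$ in the $k\times(n-k)$ rectangle into two classes according to whether $\lambda$ has a full first column. Those with a full first column biject with partitions $\hat\lambda$ in the $k\times((n-1)-k)$ rectangle via column-stripping $\lambda\mapsto\hat\lambda$, and by Proposition~\ref{lambda-weight-recurrence} they contribute
$$
t^{q^k-1}\frac{k!_{q,t^q}}{k!_{q,t}}\sum_{\hat\lambda}\wt(\hat\lambda,k;q,t^q)
\;=\; t^{q^k-1}\frac{k!_{q,t^q}}{k!_{q,t}}\,\qbin{n-1}{k}{q,t^q}
$$
by the inductive hypothesis. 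Those without a full first column have at most $k-1$ nonzero parts, hence fit in a $(k-1)\times((n-1)-(k-1))$ rectangle, and the other branch of the recursion yields $\sum_\lambda \wt(\lambda,k-1;q,t^q)=\qbin{n-1}{k-1}{q,t^q}$. Adding the two contributions gives precisely the right-hand side of the first $(q,t)$-Pascal relation in Proposition~\ref{qtpascal}, which is $\qbin{n}{k}{q,t}$.

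For the two limits, use \eqref{lambda-weight-definition} directly. For the $t\to 1$ limit with a fixed integer $q\geq 2$, every factor $[q]_{t^{e_k(x)}}$ specializes to $[q]_1=q$ and every factor $t^{e_k(x)}$ specializes to $1$, so each of the $|\lambda|$ cells contributes a factor of $q$. For the $q\to 1$ limit, apply the homomorphism \eqref{q-limit-homomorphism}: writing $r=\row(x)$, $d=\dist(x)$, and $e_k(x)=q^{r+d}-q^d$, the factor
$$
[q]_{t^{e_k(x)}}=\frac{1-t^{q^{r+d+1}-q^{d+1}}}{1-t^{q^{r+d}-q^d}}
$$
maps to $(1-t^r)/(1-t^r)=1$, while $t^{e_k(x)}\mapsto t^r$. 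Thus only the bottom-of-column cells contribute a surviving factor $t^r$, and summing $r$ over these cells gives $\sum_j \lambda'_j=|\lambda|$.

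The one step to be careful about is matching the partition split against the correct Pascal relation: there are two versions in Proposition~\ref{qtpascal}, and only the first aligns with the column-based recursion of Proposition~\ref{lambda-weight-recurrence}. Once that alignment is noticed, everything else is mechanical bookkeeping, so I do not expect a serious obstacle.
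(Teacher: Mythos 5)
Your proof is correct and follows essentially the same strategy as the paper's: induction on $n$, classifying partitions according to whether the first column is full, and matching the two branches of Proposition~\ref{lambda-weight-recurrence} against the two terms of the first $(q,t)$-Pascal relation. The only (minor) deviation is in the limit computations: the paper derives them by induction on $n$ alongside the basic limits, whereas you read them off directly from the product formula \eqref{lambda-weight-definition}, observing that each cell's factor $[q]_{t^{e_k(x)}}$ specializes to $q$ (resp.\ to $1$) and that only the bottom-of-column factors $t^{e_k(x)}$ survive the $q\to 1$ specialization, summing to $t^{|\lambda|}$. Your direct argument is correct and arguably a touch cleaner for the limits, but it is not a materially different route.
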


\noindent
Note that this gives a second proof of Corollary~\ref{qtbinpos}.

\begin{proof}
The first assertion of the theorem then follows by induction on $n$, comparing
the first $(q,t)$-Pascal relation in \eqref{q-t-Pascal-relations} with
Proposition~\ref{lambda-weight-recurrence};  classify
the terms in the sum $\sum_{\lambda} \wt(\lambda , k;q,t)$ according to whether
$\lambda$ does not or does have a full first column.

The limit evaluations in the theorem also follow by induction on $n$, using
the following basic limits:
\begin{equation}
\label{basic-limits}
\begin{aligned}
\lim_{t \rightarrow 1} [q]_{t^{q^{r+d}-q^d}} = q, 
& \qquad
\lim_{t \rightarrow 1} t^{q^{r+d}-q^d} = 1\\
\lim_{q \rightarrow 1} \left( [q]_{t^{q^{r+d}-q^d}}  \right)_{t \mapsto t^{\frac{1}{q-1}}} = 1,
& \qquad
\lim_{q \rightarrow 1}  \left( t^{q^{r+d}-q^d}\right)_{t \mapsto t^{\frac{1}{q-1}}} = t^r,
\end{aligned}
\end{equation}
and hence
\begin{equation}
\begin{aligned}
\lim_{t \rightarrow 1} \frac{k!_{q,t^q}}{k!_{q,t}} &= q^k,\\
\lim_{q \rightarrow 1} 
   \left( \frac{k!_{q,t^q}}{k!_{q,t}} \right)_{t \mapsto t^{\frac{1}{q-1}}} &= t^k.
\end{aligned}
\end{equation}
\end{proof}

In the remainder of this section, we reformulate Theorem~\ref{box-interpretation-theorem} 
in two other ways, each with their own advantages.

\subsection{A different partition interpretation}

First, note that \eqref{recalled-q-t-binomial-definition} implies that
for fixed $k \geq 0$ one has 
$$
\lim_{n \rightarrow \infty} \qbin{n}{k}{q,t}
=\prod_{i=1}^{k} \frac{1}{1-t^{q^{k}-q^{k-i}}}.
$$
For integers $q \geq 2$, this is the generating function in $t$ counting
integer partitions $\mu$ whose part sizes are restricted to the set
$\{ q^k-1, q^k-q, \ldots, q^k-q^{k-1} \}$.
One might ask whether there is an analogous result for the case where $n$ is finite, perhaps
by imposing some restriction on the multiplicities of the above parts in $\mu$.  This turns
out indeed to be the case, as we now explain.

\begin{definition}
Given $\lambda$ a partition with at most $k$ nonzero parts, set $\lambda_{k+1}=0$, and
define for $i=1,2,\ldots,k$
$$
\delta_i(\lambda):= \,\, \sum_{j=\lambda_{i+1}}^{\lambda_i-1} q^j 
=q^{\lambda_{i+1}}[\lambda_i-\lambda_{i+1}]_q.
$$
For an integer $q \geq 2$, say that a partition $\mu$
is {\it $q$-compatible with $\lambda$} if  the parts of $\mu$ are restricted to
the set $\{q^k-1, q^k-q, \ldots, q^k-q^{k-1}\}$, and for each $i=1,2,\ldots,k$, the
multiplicity of the part $q^k-q^{k-i}$ lies in the semi-open interval 
$[\delta_i(\lambda),\delta_i(\lambda)+q^{\lambda_i})$.
\end{definition}

For example, if $n=5$, $k=2$ and $\lambda=31$, the partitions $\mu$ 
which are $q$-compatible with $\lambda$ are of the form $\mu=(q^2-q)^{m_1} (q^2-1)^{m_2}$
where
$$
q^1+q^2\le m_1\le q^1+q^2+q^3-1, \quad
1\le m_2\le q.
$$

It turns out that $\mu$ determines $\lambda$ in this situation:
\begin{proposition}
\label{compatibility-uniqueness}
Given a partition $\mu$ with parts restricted to the set $\{q^k-q^{k-i}\}_{i=1}^k$,
there is at most one partition $\lambda$ having $k$ nonzero parts
or less which can be $q$-compatible with $\mu$.
\end{proposition}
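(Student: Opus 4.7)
The plan is to recover $\lambda$ from $\mu$ uniquely by reading off its parts from smallest to largest. Suppose $\lambda$ and $\lambda'$ are two partitions, each with at most $k$ nonzero parts, that are both $q$-compatible with the same $\mu$. Write $m_i$ for the multiplicity of the part $q^k - q^{k-i}$ in $\mu$, and set $\lambda_{k+1} = \lambda'_{k+1} = 0$. I would prove by downward induction on $i = k, k-1, \ldots, 1$ that $\lambda_i = \lambda'_i$.

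The engine of the induction is the following arithmetic observation. For a fixed exponent $e \geq 0$ and each integer $d \geq 0$, consider the interval
\[
I_d \;:=\; \bigl[\, q^{e}[d]_q, \; q^{e}[d+1]_q \,\bigr)
        \;=\; \bigl[\, q^{e}[d]_q, \; q^{e}[d]_q + q^{e+d} \,\bigr).
\]
Thus $I_d$ has length $q^{e+d}$, and the right endpoint of $I_d$ equals the left endpoint of $I_{d+1}$. Consequently $\bigsqcup_{d \geq 0} I_d = \mathbb{Z}_{\geq 0}$, so every nonnegative integer lies in exactly one $I_d$.

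For the inductive step, assume $\lambda_{i+1} = \lambda'_{i+1}$, and apply the observation with $e = \lambda_{i+1}$. The definition of $q$-compatibility forces the multiplicity $m_i$ to lie in the interval $[\delta_i(\lambda), \delta_i(\lambda) + q^{\lambda_i})$, which by the formula $\delta_i(\lambda) = q^{\lambda_{i+1}}[\lambda_i - \lambda_{i+1}]_q$ is exactly $I_{\lambda_i - \lambda_{i+1}}$; similarly for $\lambda'$, the multiplicity $m_i$ lies in $I_{\lambda'_i - \lambda'_{i+1}}$. Uniqueness of the $I_d$ containing $m_i$ then forces $\lambda_i - \lambda_{i+1} = \lambda'_i - \lambda'_{i+1}$, whence $\lambda_i = \lambda'_i$. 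The base case $i = k$ uses $\lambda_{k+1} = 0 = \lambda'_{k+1}$ and the same argument to recover $\lambda_k$ from $m_k$ alone.

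I do not anticipate any serious obstacle. The only step with real content is verifying that the intervals $I_d$ tile $\mathbb{Z}_{\geq 0}$, which is immediate from the telescoping identity $q^{e}[d+1]_q - q^{e}[d]_q = q^{e+d}$; the entire argument is essentially a greedy base-$q$ inversion of the map $\lambda \mapsto (\delta_1(\lambda), \ldots, \delta_k(\lambda))$.
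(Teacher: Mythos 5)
Your proof is correct and follows exactly the strategy the paper outlines: downward induction on $i$ from $k$ to $1$, showing at each step that the multiplicity $m_i$ (together with the already-recovered $\lambda_{i+1}$) pins down $\lambda_i$. The paper leaves the verification as an unexplained ``check''; your interval-tiling observation $\bigsqcup_{d \geq 0}\bigl[q^e[d]_q,\,q^e[d+1]_q\bigr) = \mathbb{Z}_{\geq 0}$ supplies precisely the arithmetic that makes the check go through, and matching $[\delta_i(\lambda),\delta_i(\lambda)+q^{\lambda_i})$ to $I_{\lambda_i-\lambda_{i+1}}$ with $e=\lambda_{i+1}$ is exactly the right identification.
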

\begin{proof}
Recall that $\lambda_{k+1}=0$.  Then for 
$i=k,k-1,\ldots,2,1$, check that the multiplicity $m_i$ of the part $q^k-q^{k-i}$
in $\mu$ determines $\lambda_i$ uniquely, by downward induction on $i$.
\end{proof}

\begin{theorem} 
\label{compat} 
\begin{equation}
\label{collated-rows-formula}
\qbin{n}{k}{q,t} = 
  \sum_\lambda \qquad \prod_{i=1}^k (t^{q^k-q^{k-i}})^{\delta_i(\lambda)} \,\, [q^{\lambda_i}]_{t^{q^k-q^{k-i}}}
\end{equation}
where the sum ranges over all partitions $\lambda$ fitting inside a $k \times (n-k)$ rectangle.

Consequently, for integers $q \geq 2$,
\begin{equation}
\label{compatible-partitions-formula}
\qbin{n}{k}{q,t} =
  \sum_\mu t^{|\mu|}
\end{equation}
where the sum runs over partitions $\mu$ which are $q$-compatible with a $\lambda$ that fits
inside a $k \times (n-k)$ rectangle.
\end{theorem}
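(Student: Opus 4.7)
My plan is to start from Theorem~\ref{box-interpretation-theorem}, namely $\qbin{n}{k}{q,t} = \sum_\lambda \wt(\lambda,k;q,t)$, and reorganize the cell-product $\wt(\lambda,k;q,t) = \prod_{x \in \lambda} \wt(x,\lambda,k;q,t)$ by rows of $\lambda$. In row $i$, every cell $(i,j)$ contributes a $[q]_{t^{e_k(i,j)}}$ factor, and the cells with $\lambda_{i+1} < j \le \lambda_i$ are exactly the bottom-of-column cells in $\lambda$ falling in row $i$, each contributing an additional $t^{e_k(i,j)}$. Using $\dist(x) = j - i + k - 1$ and $\row(x) = i$, the exponent simplifies to
$$
e_k(i,j) \;=\; q^{j+k-1} - q^{j-i+k-1} \;=\; q^{j-1}(q^k - q^{k-i}),
$$
so that all exponents appearing in row $i$ are integer multiples of $q^k - q^{k-i}$.

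For the $[q]$-factors in row $i$, I use the telescoping identity
$$
\prod_{j=0}^{r-1} [q]_{t^{q^j B}}
\;=\; \prod_{j=0}^{r-1} \frac{1 - t^{q^{j+1}B}}{1 - t^{q^j B}}
\;=\; [q^r]_{t^B},
$$
applied with $B = q^k - q^{k-i}$ and $r = \lambda_i$, to collapse $\prod_{j=1}^{\lambda_i}[q]_{t^{e_k(i,j)}}$ into $[q^{\lambda_i}]_{t^{q^k-q^{k-i}}}$. For the extra $t$-factors, I just add exponents:
$$
\sum_{j=\lambda_{i+1}+1}^{\lambda_i} e_k(i,j)
\;=\; (q^k - q^{k-i}) \sum_{j = \lambda_{i+1}}^{\lambda_i - 1} q^{j}
\;=\; (q^k - q^{k-i}) \, \delta_i(\lambda).
$$
Multiplying the resulting row contributions over $i = 1, \ldots, k$ yields \eqref{collated-rows-formula}.

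For \eqref{compatible-partitions-formula}, I expand $[q^{\lambda_i}]_{t^{q^k - q^{k-i}}} = \sum_{m=0}^{q^{\lambda_i}-1} t^{m(q^k-q^{k-i})}$, and after shifting by $\delta_i(\lambda)$ the $i$-th factor of \eqref{collated-rows-formula} becomes $\sum_{m_i} t^{m_i(q^k-q^{k-i})}$ with $m_i$ ranging over the semi-open interval $[\delta_i(\lambda), \delta_i(\lambda) + q^{\lambda_i})$. Distributing these sums realizes the product as a sum of $t^{|\mu|}$ over partitions $\mu$ that are $q$-compatible with $\lambda$. Summing over $\lambda$ inside the $k \times (n-k)$ rectangle, Proposition~\ref{compatibility-uniqueness} guarantees that no $\mu$ is counted twice, producing the generating function in \eqref{compatible-partitions-formula}.

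The argument is essentially bookkeeping once one spots the row-by-row factorization; the step that needs care is the boundary convention for bottom-of-column cells (cells in row $i$ with $\lambda_{i+1} < j \le \lambda_i$), which correctly gives $\delta_i(\lambda) = 0$ and no extra $t$-factor whenever $\lambda_{i+1} = \lambda_i$. Recognizing the telescoping structure that packages the $\lambda_i$ row-$i$ factors $[q]_{t^{q^{j-1}(q^k-q^{k-i})}}$ into a single $[q^{\lambda_i}]_{t^{q^k-q^{k-i}}}$ is the main mechanical insight driving the proof.
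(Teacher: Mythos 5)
Your proof is correct and follows essentially the same route as the paper's: start from Theorem~\ref{box-interpretation-theorem}, factor $\wt(\lambda,k;q,t)$ row by row, use the computation $e_k(i,j)=q^{j-1}(q^k-q^{k-i})$ and the telescoping identity to collapse each row's product into $(t^{q^k-q^{k-i}})^{\delta_i(\lambda)}\,[q^{\lambda_i}]_{t^{q^k-q^{k-i}}}$, then expand each $[q^{\lambda_i}]$ as a geometric series to obtain the $q$-compatible partitions $\mu$. The only difference is cosmetic: you spell out the geometric-series expansion and the appeal to Proposition~\ref{compatibility-uniqueness} for the second identity, which the paper compresses to ``follows immediately.''
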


\begin{proof}
Equation \eqref{compatible-partitions-formula} follows immediately from
equation \eqref{collated-rows-formula}.  The latter follows from Theorem~\ref{box-interpretation-theorem}
if one can show that
$$
\wt(\lambda,k;q,t)=
\prod_{i=1}^k (t^{q^k-q^{k-i}})^{\delta_i(\lambda)} \,\, [q^{\lambda_i}]_{t^{q^k-q^{k-i}}}.
$$
This would follow from showing that for each row $i=1,2,\ldots,k$ in $\lambda$,
one has 
\begin{equation}
\label{row-telescoping}
\prod_{\substack{x \in \lambda:\\ \row(x)=i}} \wt(x,\lambda,k;q,t) 
  = (t^{q^k-q^{k-i}})^{\delta_i(\lambda)} \,\, [q^{\lambda_i}]_{t^{q^k-q^{k-i}}}.
\end{equation}
This is not hard.  The left side of \eqref{row-telescoping} is easily checked to equal
$$
(t^{q^k-q^{k-i}})^{\delta_i(\lambda)} [q]_{t^{q^0(q^k-q^{k-i})}} [q]_{t^{q^1(q^k-q^{k-i})}} 
  \cdots [q]_{t^{q^{\lambda_i-1}(q^k-q^{k-i})}}.
$$
Repeated apply to this expression the identity
$$
[M]_Q \cdot [N]_{Q^M} = [MN]_{Q},
$$
with $Q=t^{q^k-q^{k-i}}$ and $N=q$ each time, and the result is the right side of
\eqref{row-telescoping}.
\end{proof}

Note that by setting $t=1$ in \eqref{compatible-partitions-formula},
one obtains a new interpretation for the usual $q$-binomial coefficient,
as the cardinality of a set of integer partitions, rather than as a generating function in $q$:

\begin{corollary} For integers $q \geq 2$, the
the integer $\qbin{n}{k}{q}$ is the number of partitions 
$q$-compatible with a $\lambda$ that fits inside a $k \times (n-k)$ rectangle.
\end{corollary}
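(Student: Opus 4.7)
The plan is to specialize $t \mapsto 1$ directly in Theorem~\ref{compat}. First I would invoke equation~\eqref{compatible-partitions-formula}, which applies for integer $q \geq 2$:
$$
\qbin{n}{k}{q,t} = \sum_\mu t^{|\mu|},
$$
the sum ranging over partitions $\mu$ that are $q$-compatible with some partition $\lambda$ fitting inside the $k\times(n-k)$ rectangle. By Corollary~\ref{qtbinpos} the left side is a polynomial in $t$, so the specialization is unambiguous. Setting $t=1$, the left side becomes $\qbin{n}{k}{q}$ by the first limit in \eqref{two-limits}, while each summand $t^{|\mu|}$ on the right becomes $1$, turning the right side into the cardinality of the index set.

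The one point needing verification is that the indexing set on the right really is a set of partitions $\mu$, rather than a set of pairs $(\lambda,\mu)$; otherwise the cardinality interpretation could overcount. This is exactly what Proposition~\ref{compatibility-uniqueness} supplies: a partition $\mu$ with parts drawn from $\{q^k-q^{k-i}\}_{i=1}^k$ can be $q$-compatible with at most one $\lambda$ having at most $k$ parts. Consequently the set of eligible $\mu$ is in bijection with the set of compatible pairs, and the specialization $t=1$ genuinely counts these partitions. There is no real obstacle here: the corollary is an immediate specialization of Theorem~\ref{compat}, with Proposition~\ref{compatibility-uniqueness} ensuring that the enumeration is well-posed.
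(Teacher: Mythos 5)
Your proof is correct and is essentially the paper's own argument: the corollary is obtained immediately by setting $t=1$ in \eqref{compatible-partitions-formula} of Theorem~\ref{compat}. The added remark about Proposition~\ref{compatibility-uniqueness} (so the sum is indexed by the $\mu$'s alone, not by pairs $(\lambda,\mu)$) is a sensible point to make explicit, though it is already implicit in how the paper phrases the summation.
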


%
%
%
%
%

\subsection{A subspace interpretation}
\label{subspace-section}

The $q$-binomial coefficient is the number of $k$-spaces of an 
$n$-space over a finite field with $q$ elements; 
see e.g. \cite[\S 7]{KacCheung}, \cite[Prop. 1.3.18]{Stanley-EC1}. 
We give in Theorem \ref{qtsubspace} 
a statistic on these subspaces whose generating function in $t$ is the $(q,t)$-binomial coefficient.

Fix a basis for the $n$-dimensional space $V$ over $\mathbb{F}_q$.
Relative to this basis, any $k$-dimensional subspace $U$ of $V$
is the row-space of a unique matrix $k \times n$ matrix $A$
over $\FF_q$ in {\it row-reduced echelon form}.  This matrix $A$ will have
exactly $k$ pivot columns, and the sparsity pattern for the (possibly) nonzero entries in its nonpivot columns
have an obvious bijection to the cells $x$ in a partition $\lambda$ inside a $k \times (n-k)$
rectangle; call these $|\lambda|$ entries $a_{ij}$ of $A$ the {\it parametrization} entries for $U$.

\begin{example}
If $n=10$ and $k=4$ one possible such row-reduced echelon form could be
$$
U=\left[
\begin{matrix}
0 & * &  0  & * & * & 0 & * & 1 & 0 & 0 \\
0 & * &  0  & * & * & 1 & 0 & 0 & 0 & 0 \\
0 & * &  1  & 0 & 0 & 0 & 0 & 0 & 0 & 0 \\
1 & 0 &  0  & 0 & 0 & 0 & 0 & 0 & 0 & 0 
\end{matrix}
\right]
$$     
where each $*$ is a parametrization entry representing an element of the field $\FF_q$.
The associated partition $\lambda=(4,3,1,0)$ fits inside a $4 \times 6$ rectangle, and 
is the one considered in Example~\ref{partition-weight-example} above:
$$
\begin{matrix}
*     &  * & * & * & \cdot & \cdot  \\
*     &  * & * & \cdot  & \cdot  & \cdot    \\
*     &  \cdot  & \cdot  & \cdot  & \cdot  & \cdot   \\
\cdot      &  \cdot  & \cdot  & \cdot  & \cdot  & \cdot  
\end{matrix}
$$ 
\end{example}

To define the statistic $s(U)$, first fix two bijections $\phi_0, \phi_1$:
$$
\begin{aligned}
\FF_q &\overset{\phi_0}{\rightarrow}\{0,1,\ldots,q-1\}\\
\FF_q &\overset{\phi_1}{\rightarrow}\{1,2,\dots,q\}.
\end{aligned}
$$
For each parametrization entry $a_{ij}$ of $A$,
define the integer value $\val_U(a_{ij})$ to be $\phi_1(a_{ij})$
or $\phi_0(a_{ij})$, depending on whether or not $(i,j)$ is the
lowest parametrization entry in its column of $A$.
Then define
$$
\begin{aligned}
\dist_U(i,j) &:=k-i+j - |\{\text{pivot columns left of }j\}|-1\\
s(U)&:= \sum_{(i,j)} \val_U(a_{ij}) (q^{i+\dist_U(i,j)}-q^{\dist_U(i,j)}) 
\end{aligned}
$$
where the summation has $(i,j)$ ranging over all parametrization positions in the row-reduced
echelon form $A$ for $U$.

\begin{theorem} 
\label{qtsubspace}
If $q$ is a prime power then
\begin{equation}
\qbin{n}{k}{q,t} = \sum_{U} t^{s(U)},
\end{equation}
in which the summation runs over all $k$-dimensional $\FF_q$ subspaces $U$ of the $n$-dimensional
$\FF_q$-vector space $V$.
\end{theorem}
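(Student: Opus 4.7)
The plan is to stratify the sum $\sum_U t^{s(U)}$ according to the ``shape'' partition $\lambda$ determined by the pivot columns of the row-reduced echelon form of $U$, and reduce the theorem to showing
\begin{equation*}
\sum_{U : \mathrm{shape}(U) = \lambda} t^{s(U)} = \wt(\lambda, k; q, t)
\end{equation*}
for each partition $\lambda$ inside the $k \times (n-k)$ rectangle. Combined with Theorem~\ref{box-interpretation-theorem}, this identity finishes the proof.

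First I would set up the identification between parametrization positions in $A$ and cells of the Ferrers diagram of $\lambda$. With pivots in columns $p_1 > p_2 > \cdots > p_k$ (read from row $1$ down to row $k$), the parametrization positions of row $i$ occupy the non-pivot columns strictly left of $p_i$, in increasing order $c^{(i)}_1 < c^{(i)}_2 < \cdots < c^{(i)}_{\lambda_i}$. Identifying the matrix position $(i, c^{(i)}_j)$ with the cell $x=(i, j)$ of $\lambda$, the number of pivots strictly left of $c^{(i)}_j$ is $c^{(i)}_j - j$, so
\begin{equation*}
\dist_U(i, c^{(i)}_j) = k - i + c^{(i)}_j - (c^{(i)}_j - j) - 1 = k - i + j - 1 = \dist(x),
\end{equation*}
which matches the per-cell exponent $q^{i + \dist_U} - q^{\dist_U}$ with $e_k(x)$.

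The main subtle point, which I would address next, is that the parametrization entry at $(i, c^{(i)}_j)$ is the lowest parametrization entry in its matrix column if and only if $(i, j)$ is the bottom cell of $\lambda$ in its partition column. Because pivot columns decrease down the rows, for $i' > i$ one has $p_{i'} < p_i$, so the parametrization columns of row $i'$ form an initial segment of those of row $i$; in particular the $j$-th such column of row $i'$, when it exists, still equals $c^{(i)}_j$. Hence the matrix cell $(i', c^{(i)}_j)$ is a parametrization entry exactly when $\lambda_{i'} \geq j$, and is $0$ otherwise. So $(i, c^{(i)}_j)$ is the lowest parametrization entry in its matrix column iff $\lambda_{i'} < j$ for all $i' > i$, iff $(i, j)$ is the bottom cell of $\lambda$ in its partition column.

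With these observations, $s(U) = \sum_{x \in \lambda} \val_U(a_x) \cdot e_k(x)$, and the parametrization entries $a_x \in \FF_q$ are independent, so the sum over subspaces $U$ of shape $\lambda$ factors as
\begin{equation*}
\sum_{U : \mathrm{shape}(U) = \lambda} t^{s(U)} = \prod_{x \in \lambda} \left( \sum_{a \in \FF_q} t^{\val_U(a) \cdot e_k(x)} \right).
\end{equation*}
For cells $x$ that are not the bottom of their column, $\val_U$ takes each value in $\{0, 1, \ldots, q-1\}$ once via $\phi_0$, giving the geometric sum $[q]_{t^{e_k(x)}}$; for bottom cells it takes each value in $\{1, 2, \ldots, q\}$ once via $\phi_1$, giving $t^{e_k(x)} [q]_{t^{e_k(x)}}$. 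Comparison with \eqref{lambda-weight-definition} shows the product equals $\wt(\lambda, k; q, t)$, completing the reduction to Theorem~\ref{box-interpretation-theorem}.
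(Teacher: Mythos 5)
Your proof is correct and follows essentially the same route as the paper's, which treats Theorem~\ref{qtsubspace} as a direct reformulation of Theorem~\ref{box-interpretation-theorem}: stratify subspaces by the echelon-form partition $\lambda$, match $\dist_U(i,j)$ with $\dist(x)$ and the ``lowest parametrization entry in its matrix column'' condition with the ``bottom cell of its partition column'' condition, and then sum over the independent parametrization entries. You fill in verifications (e.g.\ that the parametrization columns of a lower row form an initial segment of those of a higher row, hence the lowest-entry/bottom-cell correspondence) that the paper leaves implicit; no substantive difference in method.
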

\begin{proof}
This is simply a reformulation of Theorem~\ref{box-interpretation-theorem}.
When the parametrization entry $a_{ij}$ of $A$ corresponds bijectively to the cell $x$ of
$\lambda$, then one has $i=\row(x)$, $\dist_U(i,j)=\dist(x)$, and
$(i,j)$ is the lowest parametrization entry in its column of $A$
if and only if $x$ lies at the bottom of its column of $\lambda$.
From this and the definition \eqref{lambda-weight-definition},
it easily follows that  $\wt(\lambda,k;q,t)=\sum_{U} t^{s(U)}$ as $U$ ranges over all
subspaces whose echelon form corresponds to $\lambda$.
\end{proof}

\section{Generalization 1:  principally specialized Schur functions}
\label{schur-function-section}

We review here some of the definition and properties of Macdonald's {\it $7{th}$ variation on Schur
functions}, and then lift them from polynomials with $\FF_q$ coefficients
to elements of the ring $\Qq(\xx)$. We then show (Corollary~\ref{skewschurpos}) 
that,
for integers $q \geq 2$, the principal specializations of these rational functions are actually polynomials in 
a variable $t$ with nonnegative integer coefficients, generalizing the $(q,t)$-binomials.

\subsection{Lifting Macdonald's finite field Schur polynomials}

 Macdonald's $7^{th}$ variation on Schur functions from \cite[\S 7]{Macdonald} are
elements of $\FF_q[\xx]:=\FF_q[x_1,\ldots,x_n]$ defined as follows.
For each $\alpha=(\alpha_1,\ldots,\alpha_n) \in \NN^n$, first define antisymmetric 
polynomials
$$
A_\alpha(\xx):= \det( x_i^{q^{\alpha_j}} )_{i,j=1}^n.
$$
Let $\delta_n:=(n-1,n-2,\ldots,1,0)$.  Given a partition 
$\lambda=(\lambda_1 \geq \cdots \geq \lambda_n)$, define
$$
\MacSchur_\lambda(\xx):= \frac{A_{\lambda+\delta_n}(\xx)}{A_{\delta_n}(\xx)},
$$
which is a priori only a rational function in $\FF_q(\xx)$, but
which Macdonald shows is actually a polynomial lying in $\FF_q[\xx]$.
Since both $A_{\lambda+\delta_n}(\xx)$ and $A_{\delta_n}(\xx)$ are antisymmetric polynomials,
their quotient $\MacSchur_\lambda(\xx)$ is a symmetric polynomial in the
$x_i$.  Macdonald shows that $\MacSchur_\lambda(\xx)$ enjoys the stronger property
of being invariant under the entire general linear group $G=GL_n(\FF_q)$.

\begin{definition}
Recall from Section~\ref{limits-section} that
$\Qq(\xx)=\Qq(\ttt)^{\otimes n}$ where $\Qq(\ttt)$ was defined there,
with Frobenius automorphism $\varphi$ acting by 
$(\varphi f)(x_i)=f(x_1^q,\ldots,x_n^q)$.
First define  
$$
\AZ_\alpha(\xx):= \det( x_i^{q^{\alpha_j}} )_{i,j=1}^n
$$
regarded as an element of $\Qq(\xx)$, and then define
$$
\MacSchurZ_\lambda(\xx):= \frac{\AZ_{\lambda+\delta_n}(\xx)}{\AZ_{\delta_n}(\xx)}
$$
in $\Qq(\xx)$.  Even after specializing to integers $q \geq 2$
this will turn out {\it not} to be a polynomial in general.  
We will be interested later in its {\it principal specialization}, lying
in $\Qq(\ttt)$:
\begin{equation}
\label{bialternant-product-formula}
\begin{aligned}
\MacSchurZ_\lambda(1,t,t^2,\ldots,t^{n-1})
 & = \frac{ \det \left( (t^{i-1})^{q^{\lambda_j+n-j}} \right)_{i,j=1}^n }{ \det \left( (t^{i-1})^{q^{n-j}} \right)_{i,j=1}^n }\\
 & = \frac{ \det \left( (t^{q^{\lambda_j+n-j}})^{i-1} \right)_{i,j=1}^n }{ \det \left( (t^{q^{n-j}})^{i-1} \right)_{i,j=1}^n }\\
 & = \prod_{0 \leq i < j \leq n-1}
       \frac{t^{q^{\lambda_{n-j}+j}}-t^{q^{\lambda_{n-i}+i}}}{t^{q^j}-t^{q^i}}.
\end{aligned}
\end{equation}
Here the last equality used the Vandermonde determinant formula.
\end{definition}

Define the analogue of the complete homogeneous symmetric function $h_r$ by
\begin{equation}
\label{H-definition}
\HZ_r(\xx)=\MacSchurZ_{(r)}(\xx)
\end{equation}
for $r \geq 0$, and $\HZ_r(\xx)=0$ for $r < 0$.

\begin{theorem} 
\label{qthomo}
For integers $n \geq k \geq 0$,
$$
\HZ_{(n-k)}(1,t,\cdots,t^k)=\qbin{n}{k}{q,t}.
$$
\end{theorem}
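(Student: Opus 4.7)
The plan is to simply specialize the bialternant product formula \eqref{bialternant-product-formula} at the one-row partition $\lambda=(n-k)$ with $k+1$ variables, then observe that almost every factor collapses to $1$.

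Concretely, I would take $N := k+1$ (the number of variables) and $\lambda=(n-k,0,0,\ldots,0)$ with $N$ parts, so that $\lambda_1=n-k$ and $\lambda_2=\cdots=\lambda_N=0$. Substituting into \eqref{bialternant-product-formula} gives
$$
\HZ_{n-k}(1,t,\ldots,t^{k}) = \MacSchurZ_{\lambda}(1,t,\ldots,t^{N-1})
= \prod_{0\le i<j\le N-1} \frac{t^{q^{\lambda_{N-j}+j}}-t^{q^{\lambda_{N-i}+i}}}{t^{q^{j}}-t^{q^{i}}}.
$$
The key observation is that $\lambda_{N-j}$ is nonzero only when $N-j=1$, i.e.\ when $j=k$. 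Thus for pairs with $j<k$ both $\lambda_{N-j}$ and $\lambda_{N-i}$ vanish and the corresponding factor is identically $1$. Only the $k$ factors with $j=k$, $i=0,1,\ldots,k-1$ survive, and in each such factor $\lambda_{N-i}=0$ while $\lambda_{N-j}+j=(n-k)+k=n$.

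After this collapse, the remaining product is
$$
\prod_{i=0}^{k-1} \frac{t^{q^{n}}-t^{q^{i}}}{t^{q^{k}}-t^{q^{i}}}
= \prod_{i=0}^{k-1} \frac{1-t^{q^{n}-q^{i}}}{1-t^{q^{k}-q^{i}}},
$$
where the second equality comes from factoring $t^{q^i}$ out of numerator and denominator (both sides making sense in $\Qq(\ttt)_0$). Reindexing $i\mapsto i-1$, this matches exactly the product expression for $\qbin{n}{k}{q,t}$ recorded in \eqref{recalled-q-t-binomial-definition}, completing the proof.

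The argument is essentially a bookkeeping exercise; the only mild subtlety to be careful about is the direction of the indexing convention (the formula uses $\lambda_{N-j}$ rather than $\lambda_j$), which must be correctly tracked to see that it is $j=k$ rather than $j=0$ which selects the surviving factors. No induction, Pascal recursion, or tableau interpretation is required.
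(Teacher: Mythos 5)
Your proof is correct and follows essentially the same route as the paper: specialize the bialternant product formula \eqref{bialternant-product-formula} at $\lambda=(n-k,0,\ldots,0)$ with $k+1$ variables, observe that all factors with $j<k$ reduce to $1$, and identify the surviving $j=k$ factors with the product form of $\qbin{n}{k}{q,t}$ in \eqref{recalled-q-t-binomial-definition}.
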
 
\begin{proof} 
When $\lambda=(\lambda_0,\ldots,\lambda_k)=(n-k,0,\ldots,0)$, 
the last product in \eqref{bialternant-product-formula} 
$$
\MacSchurZ_\lambda(1,t,\ldots,t^k) 
  = \prod_{0 \leq i < j \leq k}     
       \frac{t^{q^{\lambda_{k+1-j}+j}}-t^{q^{\lambda_{k+1-i}+i}}}{t^{q^j}-t^{q^i}}
$$
will have the numerator exactly matching the denominator in all of its factors except those with $j=k$.
This leaves only these factors:
$$
\prod_{0 \leq i < k}     
       \frac{t^{q^{(n-k)+k}}-t^{q^{0+i}}}{t^{q^k}-t^{q^i}}\\
  = \qbin{n}{k}{q,t}.
$$
\end{proof}

\subsection{Jacobi-Trudi formulae}

Macdonald also proved Jacobi-Trudi-style determinantal formulae 
for his polynomials $\MacSchur_\lambda$, allowing him to generalize
them to skew shapes $\lambda/\mu$.  We review/adapt
his proof here so 
that it lifts to $\Qq(\xx)$, giving the same formula 
for $\MacSchurZ_\lambda(\xx)$.  

\begin{theorem} (Jacobi-Trudi) 
\label{qtJT}
For any partition $\lambda=(\lambda_1,\ldots,\lambda_n)$
$$
\MacSchurZ_{\lambda}(\xx)
 =det(\varphi^{1-j} \HZ_{\lambda_i-i+j}(\xx)_{i,j=1}^n )
$$
\end{theorem}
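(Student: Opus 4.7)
The plan is to factor the Jacobi-Trudi matrix $(\varphi^{1-j}\HZ_{\lambda_i-i+j})_{i,j}$ as a product $X\cdot W$ whose two factors have directly computable determinants. The starting observation would be a ``partial-fractions'' identity for $\HZ_r$: Laplace-expand the bialternant
$$\AZ_{(r)+\delta_n}(\xx) \;=\; \det\bigl(x_i^{q^{\alpha_j}}\bigr)_{i,j=1}^n, \qquad \alpha=(r+n-1,\,n-2,\,\ldots,\,1,\,0),$$
along its first column to obtain
$$\AZ_{(r)+\delta_n}(\xx) \;=\; \sum_{k=1}^n (-1)^{k+1}\, x_k^{q^{r+n-1}}\,\AZ_{\delta_{n-1}}\bigl(\xx^{(k)}\bigr),$$
where $\xx^{(k)}$ denotes the $(n-1)$-tuple of variables with $x_k$ omitted. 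Dividing by $\AZ_{\delta_n}(\xx)$ yields
$$\HZ_r(\xx) \;=\; \sum_{k=1}^n x_k^{q^{r+n-1}}\, w_k, \qquad w_k \;:=\; (-1)^{k+1}\,\AZ_{\delta_{n-1}}\bigl(\xx^{(k)}\bigr)/\AZ_{\delta_n}(\xx),$$
so each $\HZ_r$ is linear in the indeterminates $x_k^{q^{r+n-1}}$ with $r$-independent coefficients $w_k\in\Qq(\xx)$.

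Next I would apply $\varphi^{1-j}$ to both sides and substitute $r=\lambda_i-i+j$. The crucial point is that the exponent $q^{r+n-j}$ then collapses to $q^{\lambda_i+n-i}$, which is independent of $j$. This shows
$$\bigl(\varphi^{1-j}\HZ_{\lambda_i-i+j}\bigr)(\xx) \;=\; \sum_{k=1}^n x_k^{q^{\lambda_i+n-i}}\,\varphi^{1-j} w_k,$$
which identifies the Jacobi-Trudi matrix as the matrix product $X\cdot W$ with $X_{ik}:=x_k^{q^{\lambda_i+n-i}}$ and $W_{kj}:=\varphi^{1-j} w_k$. The transpose of $X$ is visibly $\AZ_{\lambda+\delta_n}(\xx)=\det(x_i^{q^{\lambda_j+n-j}})_{i,j}$, so $\det X=\AZ_{\lambda+\delta_n}(\xx)$; it remains to compute $\det W$.

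Since $W$ does not involve $\lambda$ at all, I would pin down $\det W$ by specializing everything to $\lambda=0$. In that case $(\varphi^{1-j}\HZ_{-i+j})_{i,j}$ is upper triangular with unit diagonal (using $\HZ_0=1$ and $\HZ_r=0$ for $r<0$), so its determinant is $1$. But the same factorization $X\cdot W$ now gives $\AZ_{\delta_n}(\xx)\cdot \det W=1$, hence $\det W=1/\AZ_{\delta_n}(\xx)$. Combining,
$$\det\bigl(\varphi^{1-j}\HZ_{\lambda_i-i+j}\bigr)_{i,j} \;=\; \AZ_{\lambda+\delta_n}(\xx)/\AZ_{\delta_n}(\xx) \;=\; \MacSchurZ_\lambda(\xx).$$

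The main step to get right is verifying that the exponent in $\varphi^{1-j}\HZ_r$ becomes independent of $j$ after the substitution $r=\lambda_i-i+j$---this is exactly what makes the twist $\varphi^{1-j}$ in the statement the natural one and enables the clean matrix factorization. Everything else is formal manipulation of bialternants in $\Qq(\xx)$, justified by the algebraic independence of the indeterminates $x_i^{q^r}$ which ensures that the Laplace expansion and the matrix product identity hold rigorously in this lifted setting.
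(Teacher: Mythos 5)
Your argument is correct and is essentially the paper's own proof: expand the bialternant defining $\HZ_r$ along its first column to write $\HZ_r = \sum_k \varphi^{n+r-1}(x_k)\,w_k$ with $r$-independent coefficients, apply $\varphi^{1-j}$ to realize the Jacobi--Trudi matrix as a product $X\cdot W$ with $\det X = \AZ_{\lambda+\delta_n}$, and pin down $\det W = \AZ_{\delta_n}^{-1}$ by the specialization $\lambda=0$ (which makes the matrix upper unitriangular). The paper phrases the substitution as $r=\alpha_i-n+j$ for general $\alpha$ and then sets $\alpha=\lambda+\delta_n$, but that is just a cosmetic reparametrization of the same computation.
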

\begin{proof} (cf. \cite[\S 7]{Macdonald}) 
For convenience of notation, 
we omit the variable set $\xx$ from the notation for $\HZ, \AZ, \MacSchurZ_\lambda$ etc.
The definition of $\HZ_r$
says that
\begin{equation}
\begin{aligned}
\HZ_r := \MacSchurZ_{(r)} 
         &=\AZ_{\delta_n}^{-1} \cdot \det \left[ \begin{matrix}
                        x_1^{q^{n+r-1}} & x_1^{q^{n-2}} & x_1^{q^{n-3}}& \cdots & x_1^{q^1} & x_1 \\
                        x_2^{q^{n+r-1}} & x_2^{q^{n-2}} & x_2^{q^{n-3}}& \cdots & x_2^{q^1} & x_2 \\
                        \vdots      & \vdots    & \vdots   & \ddots & \vdots & \vdots \\
                        x_n^{q^{n+r-1}} & x_n^{q^{n-2}} & x_n^{q^{n-3}}& \cdots & x_n^{q^1} & x_n \\
                 \end{matrix} \right] \\
\end{aligned}
\end{equation}
Expanding the determinant along the first column shows that
\begin{equation}
\label{indep-of-r-equation}
\HZ_r(\xx)  =\sum_{k=1}^n  \varphi^{n+r-1}(x_k) \cdot u_k
\end{equation}
where $u_k$ in $\Qq(\xx)$ do not depend on $r$. For
$\alpha=(\alpha_1,\ldots,\alpha_n)$, write down equation \eqref{indep-of-r-equation}
for $r=\alpha_i - n +j$ for $i=1,2,\ldots,n$, and apply $\varphi^{1-j}$ for
$j=1,2,\ldots,n$, giving the $n \times n$ system of equations
$$
\varphi^{1-j} \HZ_{\alpha_i - n + j} = \sum_{k=1}^n \varphi^{\alpha_i}(x_k) \cdot \varphi^{1-j} u_k.
$$ 
Reinterpret this as a matrix equality:
$$
\left( \varphi^{1-j} \HZ_{\alpha_i - n + j} \right)_{i,j=1}^n
  = \left( \varphi^{\alpha_i}(x_k) \right)_{i,k=1}^n
      \cdot  \left( \varphi^{1-j} u_k \right)_{k,j=1}^n
$$
Taking the determinant of both sides yields
\begin{equation}
\label{det-evaluation-trick}
\det \left( \varphi^{1-j} \HZ_{\alpha_i - n + j} \right)_{i,j=1}^n
= \AZ_\alpha \cdot B
\end{equation}
where $B:= \det \left( \varphi^{1-j} u_k \right)_{k,j=1}^n$.
One pins down the value of $B$ by choosing
$\alpha=\delta_n$ in \eqref{det-evaluation-trick}:
then $\alpha_i -n +j=j-i$, so the left side becomes the determinant
of an upper unitriangular matrix, giving
$$
1 = \AZ_{\delta_n} B
$$
and hence $B=\AZ_{\delta_n}^{-1}$.  Thus \eqref{det-evaluation-trick} becomes
$$
\det \left( \varphi^{1-j} \HZ_{\alpha_i - n + j} \right)_{i,j=1}^n = \frac{\AZ_\alpha}{\AZ_{\delta_n}}.
$$
Now taking $\alpha=\lambda+\delta_n$ yields the theorem.
\end{proof}

Theorem~\ref{qtJT} 
shows that $\MacSchurZ_{\lambda}(\xx)$ is the special case when $\mu = \varnothing$ of
the following ``skew'' construction.

\begin{definition}
Given two partitions 
$$
\begin{aligned}
\lambda&=(\lambda_1,\ldots,\lambda_{\ell})\\
\mu&=(\mu_1,\ldots,\mu_\ell)
\end{aligned}
$$
with $\mu_i \leq \lambda_i$ for $i=1,2,\ldots,\ell$,
the set difference
of their Ferrers diagrams $\lambda/\mu$ is called a {\it skew shape}.
Define
\begin{equation}
\label{skew-definition}
\MacSchurZ_{\lambda/\mu}(\xx):= \det\left( \varphi^{\mu_j-(j-1)} \HZ_{\lambda_i-\mu_j-i+j} \right)_{i,j=1,2,\ldots,\ell}
\end{equation}

In the special case where  $\mu=\varnothing$ and 
$\lambda$ is a single row of size $r$,
note that $\MacSchurZ_{(r)}(\xx)=\HZ_r(\xx)$, consistent
with \eqref{H-definition}.

In the special case where $\mu=\varnothing$ and 
$\lambda=1^r$ is a single column of size $r$, define 
$$
\EZ_r(\xx):=\MacSchurZ_{(1^r)}(\xx),
$$
and set $\EZ_r(\xx):=0$ for $r < 0$.  The following analogue of Theorem~\ref{qthomo}
is proven analogously:
\begin{equation}
\label{qtelem}
\EZ_r(1,t,t^2,\ldots,t^{n-1}) 
  = \qbin{n}{n-r}{q,t} 
      \prod_{i=1}^r \frac{ t^{q^n}-t^{q^{n-r+i-1}} }{ t^{q^{n-r+i}}-t^{q^{n-r}}  }.
\end{equation}

\end{definition}

In \cite[\S9]{Macdonald} Macdonald explains how in a more general setting, one can write
down a dual Jacobi-Trudi determinantal formula for $\MacSchurZ_{\lambda/\mu}(\xx)$, involving
the {\it conjugate} or {\it transpose} partitions $\lambda', \mu'$ and the $\EZ_r$ instead of
the $\HZ_r$.

\begin{proposition}
\label{dual-Jacobi-Trudi}
$$
\MacSchurZ_{\lambda/\mu}(\xx):= 
 \det\left( \varphi^{-\mu'_j+j-1} \EZ_{\lambda'_i-\mu'_j-i+j}(\xx) \right)_{i,j=1,2,\ldots,\ell}.
$$

\end{proposition}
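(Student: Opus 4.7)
The plan is to lift Macdonald's classical derivation of the dual Jacobi-Trudi formula from $\FF_q[\xx]$ to $\Qq(\xx)$, mirroring the lifting already performed for the ordinary Jacobi-Trudi in the proof of Theorem~\ref{qtJT}. The strategy has three ingredients: a Frobenius-twisted $\HZ$–$\EZ$ convolution identity, a matrix-inversion interpretation of this identity, and the classical complementary-minor identity.

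First I would establish the key identity, a Frobenius-twisted analog of the classical $\sum_{k}(-1)^k h_k e_{n-k} = \delta_{n,0}$, of the form
$$
\sum_{k=0}^{n} (-1)^k \varphi^{a(k)}\bigl(\HZ_k(\xx)\bigr)\cdot \varphi^{b(k)}\bigl(\EZ_{n-k}(\xx)\bigr) \,=\, \delta_{n,0}
$$
for explicit integer exponents $a(k), b(k)$. One route is to substitute the Jacobi-Trudi expression $\EZ_r = \MacSchurZ_{(1^r)} = \det(\varphi^{1-j}\HZ_{1-i+j})_{i,j=1}^r$ from Theorem~\ref{qtJT} into the sum and simplify via cofactor expansion along the first row; another is to work with a generating function in $\Qq(\xx)[[y]]$ parallel to \eqref{Dickson-definition}, whose reciprocal encodes the $\HZ_r$'s.

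Next I would package this identity as a matrix inversion. Define doubly infinite matrices $\mathbf{H} = (\mathbf{H}_{rs})_{r,s \in \ZZ}$ and $\mathbf{E} = (\mathbf{E}_{rs})_{r,s \in \ZZ}$ with entries $\mathbf{H}_{rs} = \varphi^{f(r)}\HZ_{s-r}(\xx)$ and $\mathbf{E}_{rs} = (-1)^{s-r}\varphi^{g(r)}\EZ_{s-r}(\xx)$, where $f, g$ are chosen so that the convolution identity becomes the entrywise statement $\mathbf{H}\mathbf{E} = I$. With this setup, the skew determinant \eqref{skew-definition} defining $\MacSchurZ_{\lambda/\mu}$ is exactly a finite minor of $\mathbf{H}$, obtained by selecting row indices on the $\mu$-boundary $\{j-1-\mu_j\}_{j=1}^\ell$ and column indices on the $\lambda$-boundary (the precise bijection is bookkeeping).

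Finally I would invoke the classical complementary-minor identity: if $\mathbf{H}\mathbf{E} = I$, then the determinant of any finite minor of $\mathbf{H}$ equals, up to an explicit sign, the determinant of the complementary-row-and-column minor of $\mathbf{E}$. Under the standard involution carrying a partition border over to its conjugate, this complementary minor of $\mathbf{E}$ is exactly $\det(\varphi^{-\mu'_j + j - 1}\EZ_{\lambda'_i - \mu'_j - i + j})$, which is the expression in the proposition. The main obstacle is the detailed bookkeeping of Frobenius exponents in Steps 1 and 2: in the classical (untwisted) case the inversion $HE=I$ and the complementary-minor bijection are essentially automatic, but here each index shift $i \leadsto j$ may carry a power of $\varphi$, so the exponents $a(k),b(k),f(r),g(r)$ must be calibrated so that all twists conspire to produce exactly the $\varphi^{-\mu'_j+j-1}$ factor in the statement. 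Once that calibration is done, the rest of the argument is formal.
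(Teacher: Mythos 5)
Your plan is essentially the same as the paper's (and Macdonald's) proof: establish the Frobenius-twisted $\HZ$--$\EZ$ convolution identity by expanding the Jacobi-Trudi determinant for $\EZ_r$ along its top row, package it as the statement that two unitriangular matrices built from $\HZ$'s and $\EZ$'s are mutually inverse, and apply Jacobi's complementary-minor theorem. One small correction to your Step 2 parameterization: the Frobenius twist on $\mathbf{H}$ must depend on the \emph{column} index (the paper uses $H_I=(\varphi^{1-j}\HZ_{j-i})_{i,j\in I}$ and $E_I=((-1)^{j-i}\varphi^{-i}\EZ_{j-i})_{i,j\in I}$), since in the product $(\mathbf{H}\mathbf{E})_{ik}=\sum_j\mathbf{H}_{ij}\mathbf{E}_{jk}$ the summation index $j$ is the column of $\mathbf{H}$ and the row of $\mathbf{E}$, and the basic convolution identity twists $\HZ_j$ by $\varphi^{1-j}$ in terms of that intermediate index; a twist $\varphi^{f(r)}$ depending only on the row $r$ cannot reproduce this.
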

\begin{proof}
We sketch Macdonald's proof from \cite[\S9]{Macdonald} 
for completeness, and again, omit the variables $\xx$ from the notation for convenience.
We first prove that, for any interval $I$ in $\ZZ$, the two matrices
$$
\begin{aligned}
H_I &:= \left( \varphi^{1-j} \HZ_{j-i} \right)_{i,j \in I} \\
E_I &:= \left( (-1)^{j-i} \varphi^{-i} \EZ_{j-i} \right)_{i,j \in I}
\end{aligned}
$$
are inverses of each other.  Since both $H_I, E_I$ are upper unitriangular, this will
follow if one checks that for $i,k \in I$ with $i<k$ one has
$$
\sum_{j=i}^k \varphi^{1-j} \HZ_{j-i} \cdot (-1)^{k-j} \varphi^{-j} \EZ_{k-j} = 0,
$$
or equivalently, after applying $\varphi^i$ and re-indexing $k-i=:r$,
$$
\sum_{j=0}^r (-1)^j \varphi^{1-j} \HZ_{j} \cdot \varphi^{-j} \EZ_{r-j} = 0.
$$
But this is exactly what one gets from expanding along its top row the determinant in this definition:
$$
\EZ_r= \det \left( \phi^{1-j} \HZ_{1-i+j} \right)_{i,j=1}^r.
$$

Once one knows that $H_I, E_I$ are inverses of each other, and since they have
determinant $1$ by their unitriangularity, it follows that each minor subdeterminant
of $H_I$ equals the complementary cofactor of the transpose of $E_I$.  One can
check that for each $\lambda/\mu$ there is a choice of the interval $I$ and the appropriate
subdeterminant so that this equality is the one asserted in the proposition;
see \cite[Chap. 1, eqn. (2.9)]{Macdonald-book}.
\end{proof}

Our goal in the next section is to describe a combinatorial
interpretation for $\MacSchurZ_{\lambda}(1,t,\cdots,t^n)$ in terms of tableaux, which
will imply that for integers $q \geq 2$ this polynomial in $t$ has nonnegative coefficients.

\subsection{Nonintersecting lattice paths, and the weight of a tableau}
\label{lattice-paths-section}

  The usual skew Schur function $s_{\lambda/\mu}(\xx)$ has the following
well-known combinatorial interpretation (see e.g.\cite[\S I.5]{Macdonald}, 
\cite[\S7.10]{Stanley-EC2}):
\begin{equation}
\label{usual-Schur-function-interpretation}
s_{\lambda/\mu}(x_1,\ldots,x_n)=
 \sum_{T} \xx^T
\end{equation}
where $T$ runs over all reverse column-strict tableaux of shape $\lambda/\mu$ with
entries in $\{1,2,\ldots,n\}$.  Here a {\it reverse column-strict tableaux $T$} is an
assignment of an entry to each cell of the skew shape $\lambda/\mu$ in such a way
that the entries decrease weakly left-to-right in each row, and decrease strictly from
top-to-bottom in each column.  The monomial $\xx^T:=\prod_i x_{T_i}$ as $i$ ranges over
the cells of $\lambda/\mu$.  

There is a well-known combinatorial proof of this formula 
(see \cite[Theorem 2.7.1]{Stanley-EC1} and \cite[\S7.16]{Stanley-EC2}) 
due to Gessel and Viennot.  This proof begins with the 
Jacobi-Trudi determinantal expression for $s_{\lambda/\mu}$,
reinterprets this as a signed sum over tuples of lattice paths, cancels this sum
down to the nonintersecting tuples of lattice paths, and then shows how these biject
with the tableaux.

Note that \eqref{usual-Schur-function-interpretation} implies the following
interpretation for the principal specialization of $s_{\lambda/\mu}$:
$$
s_{\lambda/\mu}(1,t,\ldots,t^k)= \sum_{T} t^{\sum_i T_i}
$$
where $T$ ranges over the reverse column-strict tableaux of shape $\lambda/\mu$ with
entries in $\{0,1,\ldots,k\}$. 
Our goal is to generalize this to $\MacSchurZ_{\lambda/\mu}(1,t,\ldots,t^k)$,
using the Gessel-Viennot proof mentioned above.

We begin by recalling how lattice paths biject with partitions and
tableaux, in order to put the appropriate weight on the tuples
of lattice paths.  We start with the easy bijections between these three objects:
\begin{enumerate}
\item[(i)] Partitions $\nu$ inside a $k \times r$ rectangle.
\item[(ii)] Lattice paths $P$ taking unit steps north $(N)$ and east $(E)$
from $(x,y)$ to $(x+r,y+k)$.
\item[(iii)] Reverse column-strict tableaux of the single row shape $(r)$
and entries in $\{0,1,\ldots,k\}$.
\end{enumerate}
The bijection between (i) and (ii) sends the lattice path $P$ to the Ferrers diagram $\nu(P)$ 
(in English notation) having $P$ as its outer boundary and northwest corner at $(x,y+k)$.  The
bijection between (ii) and (iii) sends the lattice path $P$ to the tableau whose entries
give the depths below the line $y=k$ of the horizontal steps in the path $P$.  For example, if $k=4, r=5$
then the partition $\nu=(4,4,1,0)$ corresponds to the path $P$ whose unit steps form the
sequence $(N,E,N,E,E,E,N,N,E)$, which corresponds to the single-row tableau $T=32220$;  
see Figure~\ref{tableau-figure}(a).

\begin{figure}
\label{tableau-figure}
\epsfysize = 3.0 in 
\centerline{\epsffile{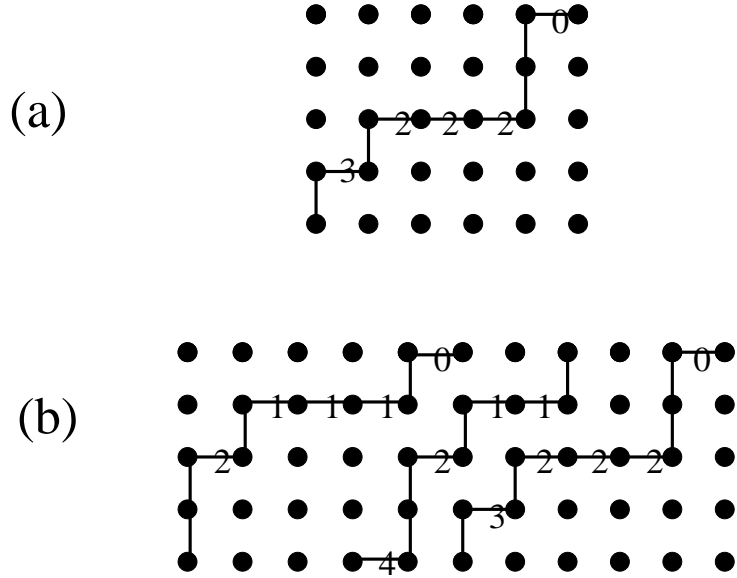}} 
\caption{
(a) The lattice path corresponding to the partition 
$\nu=(4,4,1,0)$ inside a $4 \times 5$ rectangle,
or to the single-row tableau $T=32220$ with entries in $\{0,1,2,3,4\}$.  (b) From right to left, 
the $3$-tuple $(P_1,P_2,P_3)$ of nonintersecting lattice paths corresponding to the
tableau $T$ described in the text.}
\end{figure}

Given any skew shape $\lambda/\mu$, the bijection between (ii) and (iii) above
generalizes to one between these two sets:
\begin{enumerate}
\item[$\bullet$] All $\ell$-tuples $(P_1,\ldots,P_\ell)$ of lattice paths,
where $P_i$ goes from $(\mu_i-(i-1),0)$  to $(\lambda_i-(i-1),k)$,
and no pair $P_i,P_j$ of paths touches, that is, the paths are {\it nonintersecting}.
\item[$\bullet$] Reverse column strict tableaux of shape $\lambda/\mu$ with
entries in $\{0,1,\ldots,k\}$.
\end{enumerate}
Here the bijection has the $i^{th}$ row of the tableaux giving the depths below the line
$y=k$ of the horizontal steps in the $i^{th}$ path $P_i$.  For example, if $k=4$ and $\ell=3$ with
$\lambda/\mu=(8,6,5)/(3,2,0)$, then the tableau $T$ of shape $\lambda/\mu$ having
entries in $\{0,1,2,3,4\}$ given by
$$
T=
\begin{matrix}
\cdot & \cdot & \cdot & 3 & 2 & 2 & 2& 0\\
\cdot & \cdot & 4     & 2 & 1& 1 &  &  \\
2     & 1     & 1     & 1 & 0&   &  & 
\end{matrix}
$$
corresponds to the $3$-tuple $(P_1,P_2,P_3)$ where 
$$
\begin{aligned}
&P_1\text{ is the path from }(3,0)\text{ to }(8,4)\text{ with steps }(N,E,N,E,E,E,N,N,E)\\
&P_2\text{ is the path from }(1,0)\text{ to }(5,4)\text{ with steps }(E,N,N,E,N,E,E,N)\\
&P_3\text{ is the path from }(-2,0)\text{ to }(3,4)\text{ with steps }(N,N,E,N,E,E,E,N,E)
\end{aligned}
$$
as depicted from right to left in Figure~\ref{tableau-figure}(b).

Given such a tableau $T$ corresponding to the tuple of paths 
$(P_1,\ldots,P_\ell)$, and letting $\nu(P_i)$ be the partition
within a rectangle of width $k$ that has $P_i$ as its outer boundary as before, 
define
\begin{equation}
\label{tableau-weight-definition}
\wt(T;q,t):=
   \prod_{i=1}^\ell \varphi^{\mu_i-(i-1)} \wt(\nu(P_i),k;q,t).
\end{equation}
In the next proof, we will use the fact that for a lattice path $P$ and a
cell $x$ of $\nu(P)$, the distance $\dist_P(x)$ which
appears in the formula \eqref{lambda-weight-definition} 
for $\wt(x,\nu(P),k;q,t)$ is the
taxicab distance from the starting point of path $P$ to the cell $x$.

\begin{theorem} 
\label{Schurpos} 
$$
\MacSchurZ_{\lambda/\mu}(1,t,\cdots,t^k)
  = \sum_T \wt(T;q,t)
$$
where the sum ranges over all reverse column-strict tableaux $T$ of shape $\lambda/\mu$ with
entries in $\{0,1,\ldots,k\}$.

Furthermore,
\begin{equation}
\lim_{t \rightarrow 1} \wt(T; q,t) = q^{\sum_i T_i}
\quad \text{ and } \quad
\lim_{q \rightarrow 1} \wt(T; q,t^{\frac{1}{q-1}}) = t^{\sum_i T_i}.                                                                                           
\end{equation}
\end{theorem}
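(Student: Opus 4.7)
The plan is to prove this via a Gessel--Viennot sign-reversing involution on weighted tuples of lattice paths, starting from the skew Jacobi--Trudi formula \eqref{skew-definition}. After principally specializing at $\xx=(1,t,\ldots,t^k)$, Theorem~\ref{qthomo} combined with Theorem~\ref{box-interpretation-theorem} expands each matrix entry $\varphi^{\mu_j-(j-1)}\HZ_{\lambda_i-\mu_j-i+j}(1,t,\ldots,t^k)$ as a sum of shifted weights $\varphi^{\mu_j-(j-1)}\wt(\nu,k;q,t)$ over partitions $\nu$ in a $k\times(\lambda_i-\mu_j-i+j)$ rectangle. Via the standard bijection reviewed in Section~\ref{lattice-paths-section}, these partitions parametrize lattice paths from $(\mu_j-(j-1),0)$ to $(\lambda_i-(i-1),k)$. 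Expanding the determinant therefore realizes $\MacSchurZ_{\lambda/\mu}(1,t,\ldots,t^k)$ as a signed sum over permutations $\sigma\in\Symm_\ell$ and $\ell$-tuples $(P_1,\ldots,P_\ell)$ with $P_i$ running from $(\mu_{\sigma(i)}-(\sigma(i)-1),0)$ to $(\lambda_i-(i-1),k)$.

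The technical heart is to rewrite the shifted weight of each path in plane coordinates. For $P$ starting at $(a_0,0)$, change variables to $a=j-1+a_0$ (plane column) and $h'=k-i$ (plane height of cell bottom) in \eqref{lambda-weight-definition}. A direct check shows that after $\varphi^{a_0}$, the exponent $e_k(x)$ for the cell $x=(i,j)\in\nu(P)$ becomes $q^{k+a}-q^{h'+a}$, depending only on the plane position $(a,h')$. Moreover $x$ is the bottom cell of its column in $\nu(P)$ precisely when the east-step $(a,h')\to(a+1,h')$ lies on $P$. Consequently
\[
\varphi^{a_0}\wt(\nu(P),k;q,t)=\prod_{(a,h')\in\nu(P)}[q]_{t^{q^{k+a}-q^{h'+a}}}\cdot\prod_{\substack{(a,h')\to(a+1,h')\\\text{east-step of }P}}t^{q^{k+a}-q^{h'+a}},
\]
in which each factor depends only on the plane position of the corresponding cell or east-step, not on which path produced it.

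Now apply the Gessel--Viennot involution, pairing each intersecting tuple with the one obtained by swapping tails at the earliest intersection point, which flips $\sgn(\sigma)$. The east-step product is trivially invariant because the multiset of all east-steps across the tuple is unchanged by tail-swap. For the cell product one must verify that the multiplicity of each plane cell $(a,h')$ in $\bigsqcup_i\nu(P_i)$ is preserved. For cells strictly below the intersection height the two membership indicators simply switch labels, so multiplicities agree. For cells above the intersection height the argument uses that any continuation of either path past the intersection point $(x_X,y_X)$ satisfies $c_P(h)\geq x_X\geq\max(\alpha,\beta)$, where $\alpha<\beta$ are the two relevant source $x$-coordinates; this forces both the original and the swapped multiplicity differences to collapse to $\mathbf{1}[\alpha\leq a<\beta]$, giving equality. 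The surviving tuples are nonintersecting, and since both sources and sinks are strictly decreasing in $x$-coordinate, they force $\sigma=\mathrm{id}$; Section~\ref{lattice-paths-section} identifies such tuples bijectively with reverse column-strict tableaux $T$ of shape $\lambda/\mu$ with entries in $\{0,\ldots,k\}$, and their unsigned weight is $\wt(T;q,t)$ by \eqref{tableau-weight-definition}.

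The limit evaluations follow term-by-term from the basic limits \eqref{basic-limits}. As $t\to 1$ every $[q]_{t^{\cdots}}\to q$ and every $t^{\cdots}\to 1$, so the cell product contributes $q^{|\nu(P_i)|}$ for each path and the total becomes $q^{\sum_i T_i}$, since row $i$ of the tableau records the east-step depths of $P_i$, which sum to $|\nu(P_i)|$. Under $t\mapsto t^{1/(q-1)}$ with $q\to 1$, each $[q]$-factor tends to $1$, while each east-step factor at plane position $(a,h')$ tends to $t^{k-h'}$, which equals $t^{T_{i,j}}$ for the $j$-th east-step of $P_i$ at depth $T_{i,j}=k-h'$, so the product gives $t^{\sum_i T_i}$. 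The hard part is the weight-invariance verification above the intersection height, where the underlying rectangles for $\nu(P_i)$ genuinely change under tail-swap and the cell-multiplicity argument is most delicate.
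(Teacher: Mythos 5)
Your proposal follows essentially the same route as the paper: a Gessel--Viennot sign-reversing involution on weighted lattice-path tuples coming from the skew Jacobi--Trudi determinant, expanding each entry via Theorems~\ref{qthomo} and~\ref{box-interpretation-theorem}. The one point where your bookkeeping differs is the weight-invariance under tail-swapping: the paper traces each cell through the swap and checks that the Frobenius shift $\varphi^{\mu_{w(i)}-(w(i)-1)}$ exactly compensates the change in $\dist(x)$, whereas you rewrite everything in absolute plane coordinates so that each cell contributes $[q]_{t^{q^{k+a}-q^{h'+a}}}$ and each east-step contributes $t^{q^{k+a}-q^{h'+a}}$, reducing invariance to the statement that the combined multisets of cell positions and east-step positions across the tuple are unchanged. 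That multiset statement is correct (it uses exactly that $c_P(h)\geq x_X\geq\max(\alpha_i,\alpha_j)$ for heights above the crossing, as you say), though your ``multiplicity differences collapse to $\mathbf{1}[\alpha\leq a<\beta]$'' phrasing is vague and would benefit from an explicit case check; the underlying insight --- that the effective exponent depends only on the absolute plane position of the cell --- is the same one the paper exploits, just aggregated rather than applied cell-by-cell.
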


\begin{proof} 
We recapitulate and adapt the usual Gessel-Viennot proof alluded to above,
being careful to ensure that the weights behave correctly in this context.

Starting with the definition \eqref{skew-definition}, and using
Theorems~\ref{qthomo} and \ref{box-interpretation-theorem}
to replace occurrences of $\HZ_r$, one has 
$$
\begin{aligned}
\MacSchurZ_{\lambda/\mu}(1,t,\cdots,t^k)
  &= \det( \varphi^{\mu_j-(j-1)} \HZ_{\lambda_i-\mu_j-i+j} )_{i,j=1,2,\ldots,\ell} \\
  &= \sum_{w \in \Symm_\ell} \sgn(w) \prod_{i=1}^\ell  
       \varphi^{\mu_{w(i)}-(w(i)-1)} \HZ_{\lambda_i-\mu_{w(i)}-i+w(i)}(1,t,\ldots,t^k) \\
  &= \sum_{(w,(P_1,\ldots,P_\ell))} \sgn(w) \prod_{i=1}^\ell 
       \varphi^{\mu_{w(i)}-(w(i)-1)} \wt(\nu(P_i),k;q,t).
\end{aligned}
$$
The last summation runs over pairs $(w,(P_1,\ldots,P_\ell))$, where $w$ is a
permutation in $\Symm_\ell$, and $(P_1,\ldots,P_\ell)$ is an $\ell$-tuple of lattice paths
where $P_i$ goes from $(\mu_{w(i)}-(w(i)-1),0)$ to $(\lambda_i-(i-1),k)$.

One wants to cancel all the terms in the last sum above that have at least one pair $(P_i,P_j)$ of 
intersecting lattice paths.  In particular, this occurs if $w$ is not the 
identity permutation.  Therefore all terms remaining will have $w$ equal to the
identity, and the paths $(P_1,\ldots,P_\ell)$ nonintersecting.  Note that these
leftover paths have the correct weight $\wt(T;q,t)$ for their corresponding
tableau $T$, as given in the theorem.

The Gessel-Viennot cancellation argument involves tail-swapping.  
One cancels a term $(w,(P_1,\ldots,P_\ell))$ 
with another term having equal weight and opposite sign.  
One finds this other term, say by choosing
the southeasternmost intersection point $p$ for any pair of the paths, and choosing
the lexicographically smallest pair of indices $i<j$ of paths $P_i,P_j$ which touch at $p$.
Then replace $w$ by $w':=w \cdot (i,j)$, and replace the pair of paths $(P_i,P_j)$ with the pair
of paths $(P'_j,P'_i)$, in which $P'_i$ (resp. $P'_j$) follows the path $P_i$ up until it reaches $p$,
but then follows $P_j$ (resp. $P_i$) from that point onward.  

One must check that this replacement does not change the weight, that is,
\begin{equation}
\label{tail-swapping-equation}
\begin{aligned}
&\varphi^{\mu_{w(i)}-(w(i)-1)} \wt(\nu(P_i),k;q,t) 
 \cdot \varphi^{\mu_{w(j)}-(w(j)-1)} \wt(\nu(P_j),k;q,t) \\
& =
  \varphi^{\mu_{w'(i)}-(w'(i)-1)} \wt(\nu(P'_j),k;q,t) 
  \cdot \varphi^{\mu_{w'(j)}-(w'(j)-1)} \wt(\nu(P'_i),k;q,t) \\
& \quad =
  \varphi^{\mu_{w(i)}-(w(i)-1)} \wt(\nu(P'_i),k;q,t) 
   \cdot \varphi^{\mu_{w(j)}-(w(j)-1)} \wt(\nu(P'_j),k;q,t).
\end{aligned}
\end{equation}
This follows by comparing how each cell $x$ of $\nu(P_i)$
(or of $\nu(P_j)$) contributes a factor of the form
$\varphi^m \wt(x,\nu(P),k;q,t)$ to the products on the left
and right sides of \eqref{tail-swapping-equation}.
There are two cases. If $x$ is a cell of either $\nu(P_i)$
or $\nu(P_j)$ lying to the left of the point $p$, then it will
contribute the same factor on both sides.  
If $x$ is a cell of $\nu(P_i)$ lying to the right of $p$, then it contributes 
$$
\begin{aligned}
&\varphi^{\mu_{w(i)}-(w(i)-1)} \wt(x,\nu(P_i),k;q,t) 
  \text{ on the leftmost side of }\eqref{tail-swapping-equation}\text{ and }\\
&\varphi^{\mu_{w(j)}-(w(j)-1)} \wt(x,\nu(P'_j),k;q,t) 
  \text{ on the rightmost side of }\eqref{tail-swapping-equation}.\\
\end{aligned}
$$
However, we claim there is an equality
$$
\varphi^{\mu_{w(i)}-(w(i)-1)} \wt(x,\nu(P_i),k;q,t)
= \varphi^{\mu_{w(j)}-(w(j)-1)} \wt(x,\nu(P'_j),k;q,t).
$$
Definition \eqref{lambda-weight-definition} shows
that the exponent $e_k(x):=q^{\row(x)+\dist_P(x)}-q^{\dist_P(x)}$
determining the powers of $t$ appearing in $\wt(x,\nu(P),k;q,t)$
will be computed with the {\it same} row index $\row(x)$,
whether one considers $x$ inside $\nu(P_i)$ or inside $\nu(P'_j)$.
However, the distance $\dist_{P_i}(x)$ from $x$ to the start of $P_i$
is $\mu_{w(j)}-\mu_{w(i)}-w(i)+w(j)$ {\it less} than its distance $\dist_{P'_j}(x)$ to the start of 
$P'_j.$ This difference is exactly compensated by the difference in 
the Frobenius power which will be applied:
$\varphi^{\mu_{w(i)}-(w(i)-1)}$ versus $\varphi^{\mu_{w(j)}-(w(j)-1)}$.
The argument is similar for a cell of $\nu(P_j)$ lying to the right of $p$.
Thus the two terms have the same weight, and opposite signs: $\sgn(w')=-\sgn(w)$.

One can check that this bijection between terms is an involution, and
hence it provides the necessary cancellation.
\end{proof}

\begin{corollary}
\label{skewschurpos}
If $q\ge 2$ is an integer, then $\MacSchurZ_{\lambda}(1,t,\cdots,t^k)$ 
is a polynomial in $t$ with 
nonnegative coefficients.
\end{corollary}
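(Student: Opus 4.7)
My plan is to reduce the statement to Theorem~\ref{Schurpos} and then verify nonnegativity at the level of a single tableau weight. Specializing $\mu=\varnothing$ in Theorem~\ref{Schurpos} gives
\begin{equation*}
\MacSchurZ_\lambda(1,t,\ldots,t^k) = \sum_T \wt(T;q,t),
\end{equation*}
with $T$ ranging over reverse column-strict tableaux of shape $\lambda$ with entries in $\{0,1,\ldots,k\}$. Since polynomials in $t$ with nonnegative coefficients are closed under sums and products, it will suffice to show that each $\wt(T;q,t)$ is such a polynomial for any integer $q\ge 2$.

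By definition~\eqref{tableau-weight-definition}, $\wt(T;q,t)=\prod_{i=1}^{\ell}\varphi^{-(i-1)}\wt(\nu(P_i),k;q,t)$, and by~\eqref{lambda-weight-definition} each factor $\wt(\nu(P_i),k;q,t)$ is itself a product over cells $x\in\nu(P_i)$ of terms of the form $[q]_{t^{e_k(x)}}$ or $t^{e_k(x)}[q]_{t^{e_k(x)}}$, where $e_k(x)=q^{\row(x)+\dist_{P_i}(x)}-q^{\dist_{P_i}(x)}$. Applying $\varphi^{-(i-1)}$ cell-by-cell transforms $e_k(x)$ into $q^{\row(x)+\dist_{P_i}(x)-(i-1)}-q^{\dist_{P_i}(x)-(i-1)}$. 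Once $q$ is specialized to an integer $\ge 2$, this is a difference of two nonnegative integer powers of $q$ precisely when $\dist_{P_i}(x)\ge i-1$; granting this, each factor becomes $[q]_{t^{q^a-q^b}}$ (possibly times $t^{q^a-q^b}$) with $a>b\ge 0$, which is a polynomial in $t$ with nonnegative coefficients since $[q]_{t^N}=1+t^N+t^{2N}+\cdots+t^{(q-1)N}$.

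The key inequality $\dist_{P_i}(x)\ge i-1$ follows from the reverse column-strictness of $T$. Under the bijection between nonintersecting path tuples and tableaux, $T_{ic}$ equals the height of the $c$-th column of $\nu(P_i)$. The strict decrease $k\ge T_{1c}>T_{2c}>\cdots>T_{ic}\ge 0$ down column $c$ of $T$ forces $T_{ic}\le k-(i-1)$. Therefore every cell $x\in\nu(P_i)$ at position (row $r$, column $c$) has $r\le T_{ic}\le k-(i-1)$, and so
\begin{equation*}
\dist_{P_i}(x) = (k-r)+(c-1) \ge (i-1)+0 = i-1,
\end{equation*}
as required. The main, and essentially only, obstacle is recognizing that this distance bound is precisely what is needed for the Frobenius shifts $\varphi^{-(i-1)}$ to preserve polynomiality with nonnegative coefficients; once identified, it is an immediate consequence of column-strictness of $T$ together with the entry bound $T_{ij}\le k$.
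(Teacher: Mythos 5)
Your proof is correct and follows essentially the same strategy as the paper: reduce via Theorem~\ref{Schurpos} to showing each $\wt(T;q,t)$ is a nonnegative polynomial in $t$, then observe that when $\mu=\varnothing$ every column of $\nu(P_i)$ has height at most $k-(i-1)$, so the Frobenius shift $\varphi^{-(i-1)}$ keeps each cell's contribution in the form $[q]_{t^{q^a-q^b}}$ with $a>b\ge 0$. The only (cosmetic) difference is that you derive the height bound directly from the column-strictness $T_{1c}>T_{2c}>\cdots$ of the tableau, whereas the paper derives the equivalent fact that $P_i$ must begin with $i-1$ north steps from the non-intersection of the paths, and then concludes by iterating the second case of the recurrence in Proposition~\ref{lambda-weight-recurrence} rather than by examining the exponents cell-by-cell as you do.
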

\begin{proof}
Use Theorem~\ref{Schurpos}.  We wish to show that
when $\mu=\varnothing$,  for every column-strict tableau $T$ of
shape $\lambda=\lambda/\mu$, every
factor in the product formula for $\wt(T;q,t)$ given by
\eqref{tableau-weight-definition} is a polynomial in $t$
with nonnegative coefficients for $q \geq 2$ an integer.

The key point is that $\mu=\varnothing$ means the sequence
of nonintersecting lattice paths $(P_1,P_2,\ldots,P_\ell)$
corresponding to $T$ will start in the {\it consecutive} 
positions 
$$
(0,0),(-1,0),(-2,0), \ldots, (-\ell+1,0).
$$
For each $i$, this forces the path $P_i$ 
to start with at least $i-1$ vertical steps, in order to
avoid intersecting the next path $P_{i-1}$.  Hence the
corresponding partition shape $\nu(P_i)$ will be
missing at least $i-1$ boxes in its first
column.  Consequently, in the product formula for $\wt(T;q,t)$ given by
\eqref{tableau-weight-definition}, one can rewrite each factor
$$
\begin{aligned}
\varphi^{\mu-(i-1)} \wt(\nu(P_i),k;q,t) 
&= \varphi^{-(i-1)} \wt(\nu(P_i),k;q,t)\\
&=\wt(\nu(P_i),k-(i-1);q,t)
\end{aligned}
$$
where the last equality uses $i-1$ times repeatedly 
the second case of the recurrence in 
Proposition~\ref{lambda-weight-recurrence}.
The formula for $\wt(\nu(P_i),k-(i-1);q,t)$
given in \eqref{lambda-weight-definition} then
shows that it is a polynomial in $t$ with
nonnegative coefficients whenever $q \geq 2$ is an integer.
\end{proof}

\begin{example}
The two skew shapes
$$
(2,1)=(2,1)/(0,0) =   \begin{matrix} \times & \times \\
                   \times &     
    \end{matrix} 
\qquad \text{ and }\qquad
(2,2)/(1,0)=    \begin{matrix} \cdot & \times \\
                   \times     & \times 
    \end{matrix} \qquad
$$
have the same {\it ordinary} Schur functions, and hence the same principal specializations
$$
s_{(2,1)}(1,q) = s_{(2,2)/(1,0)}(1,q)=q^1+q^2
$$
corresponding to either the 
two reverse column-strict tableaux
$$
T_1=\begin{matrix} 1 & 0 \\
                   0 &    
    \end{matrix} \qquad
T_2=\begin{matrix} 1 & 1 \\
                   0 &    
    \end{matrix} \qquad \text{ of shape }(2,1)
$$
or the tableaux
$$
T'_1=\begin{matrix} \cdot & 1 \\
                   0     & 0 
    \end{matrix} \qquad
T'_2=\begin{matrix} \cdot & 1 \\
                   1     & 0   
    \end{matrix} \qquad \text{ of shape }(2,2)/(1,0).
$$

We compare here what the preceding results say for
$$
\begin{aligned}
\MacSchurZ_{(2,1)/(0,0)}(1,t) = \MacSchurZ_{(2,1)}(1,t)
&=
\det \left[ 
\begin{matrix}
\varphi^0 \HZ_{2}(1,t) & \varphi^{-1}\HZ_{3}(1,t) \\
\varphi^0 \HZ_{0}(1,t) & \varphi^{-1}\HZ_{1}(1,t) 
\end{matrix}
\right]  \\
&=[1+q+q^2]_{t^{q-1}} \cdot [1+q]_{t^\frac{q-1}{q}} -
    [1+q+q^2+q^3]_{t^\frac{q-1}{q}} \\
&=t+t^2+t^3+t^4+t^5+t^6 \text{ when }q=2,
\end{aligned}
$$
versus
$$
\begin{aligned}
\MacSchurZ_{(2,2)/(1,0)}(1,t) 
&=
\det \left[ 
\begin{matrix}
\varphi^{1} \HZ_{1}(1,t) & \varphi^{-1}\HZ_{3}(1,t) \\
\varphi^{1} \HZ_{0}(1,t) & \varphi^{-1}\HZ_{2}(1,t) 
\end{matrix}
\right]  \\
&=[1+q]_{t^{q(q-1)}} \cdot [1+q+q^2]_{t^\frac{q-1}{q}} -
    [1+q+q^2+q^3]_{t^\frac{q-1}{q}} \\
&=t^2+t^{\frac{5}{2}}+t^3+t^4+t^{\frac{9}{2}}+t^5 \text{ when }q=2.
\end{aligned}
$$
Note that $\MacSchurZ_{(2,1)}(1,t) \neq \MacSchurZ_{(2,2)/(1,0)}(1,t)$.
Both are elements of $\Qq(\ttt)$ and can be rewritten as weighted sums over
tableaux, according to Theorem~\ref{Schurpos}:
$$
\begin{aligned}
\MacSchurZ_{(2,1)/(0,0)}(1,t) 
&=\wt(T_1;q,t) + \wt(T_2;q,t)\\
&=t^{q-1}[q]_{t^{q-1}} + t^{q-1}[q]_{t^{q-1}} \cdot t^{q^2-q}[q]_{t^{q^2-q}}\\
&=(t+t^2) + (t^3+t^4+t^5+t^6)\text{ when }q=2,\\
 & \text{ versus } \\
\MacSchurZ_{(2,2)/(1,0)}(1,t) 
&=\wt(T'_1;q,t) + \wt(T'_2;q,t)\\
&= t^{q^2-q}[q]_{t^{q^2-q}} + t^{q^2-q}[q]_{t^{q^2-q}} \cdot t^{1-\frac{1}{q}}[q]_{t^{1-\frac{1}{q}}} \\
&=(t^2+t^4) + (t^{\frac{5}{2}}+t^3+t^{\frac{9}{2}}+t^5)\text{ when }q=2.
\end{aligned}
$$
Lastly, note that that $\MacSchurZ_{(2,1)}(1,t)$ is a polynomial in $t$ 
(with nonnegative coefficients) for integers $q \geq 2$, as predicted by
Proposition~\ref{dual-Jacobi-Trudi}, but this is not true for the skew
example $\MacSchurZ_{(2,2)/(1,0)}(1,t)$. 
\end{example}

\section{Generalization 2: multinomial coefficients}
\label{multinomial-section}

  We explore here a different generalization of the $(q,t)$-binomial coefficient, this
time to a multinomial coefficient that appears naturally within the invariant theory
of $GL_n(\FF_q)$.

\subsection{Definition}

Given a composition $\alpha=(\alpha_1,\ldots,\alpha_\ell)$ of $n$,
define its partial sums $\sigma_s:=\alpha_1+\alpha_2+\cdots+\alpha_s$, so that $\sigma_0=0$ and
let
$$
\qbin{n}{\alpha}{q,t}:= \qbin{n}{\alpha_1,\ldots,\alpha_\ell}{q,t} 
:=\frac{n!_{q,t}}{\alpha !_{q,t}}
 = \frac{ \prod_{i=1}^n (1-t^{q^n-q^{n-i}}) }
     { \prod_{s=1}^{\ell} \prod_{i=1}^{\alpha_s} (1-t^{q^{\sigma_s}-q^{\sigma_s-i}}) },
$$
where
$$
\alpha!_{q,t}:=\alpha_1!_{q,t} \, \cdot \, \alpha_2!_{q,t^{q^{\sigma_1}}}  \, \cdot \, 
                \alpha_3!_{q,t^{q^{\sigma_2}}}  \, \cdots  \, \alpha_\ell!_{q,t^{q^{\sigma_{\ell-1}}}}.
$$
Note its relation to the $(q,t)$-binomial
$$
\qbin{n}{k}{q,t}=\qbin{n}{k,n-k}{q,t},
$$
as well as these formulae:
\begin{equation}
\label{multinomial-in-terms-of-binomials}
\begin{aligned}
\qbin{n}{\alpha}{q,t}
&=\qbin{n}{\alpha_1}{q,t}
  \varphi^{\alpha_1} \qbin{n-\alpha_1}{\alpha_2,\ldots,\alpha_\ell}{q,t}\\
&=\qbin{n}{\alpha_1}{q,t}
  \varphi^{\sigma_1} \qbin{n-\alpha_1}{\alpha_2}{q,t}
  \varphi^{\sigma_2} \qbin{n-\alpha_1-\alpha_2}{\alpha_3}{q,t} 
\cdots
\end{aligned}
\end{equation}
Equations~\ref{multinomial-in-terms-of-binomials} along with Corollary~\ref{qtbinpos} imply
that for integers $q \geq 2$,
the $(q,t)$-multinomial is a polynomial in $t$ with nonnegative coefficients.  
As with the $(q,t)$-binomial, it has two limiting values
given by the usual {\it $q$-multinomial coefficient}:
$$
\begin{aligned}
\lim_{t\rightarrow 1}\qbin{n}{\alpha}{q,t} &=\qbin{n}{\alpha}{q}
  := \frac{ \prod_{i=1}^n \left( q^n-q^{n-i} \right) }
     { \prod_{s=1}^{\ell} \prod_{i=1}^{\alpha_s}\left( q^{\sigma_s}-q^{\sigma_s-i } \right)}\\
\lim_{q\rightarrow 1}\qbin{n}{\alpha}{q,t^{\frac{1}{q-1}}}&=\qbin{n}{\alpha}{t}.
\end{aligned}
$$

\subsection{Algebraic interpretation of multinomials}

We recall here two algebraic interpretations of the usual multinomial and
$q$-multinomial that were mentioned in the Introduction, and then give the
analogue for the $(q,t)$-multinomial.

The symmetric group $W=\Symm_n$ acts transitively on the collection of all flags of subsets 
$$
\varnothing=:S_0 \subset S_1 \subset \cdots \subset S_{\ell-1} \subset S_\ell:=\{1,2,\ldots,n\}
$$
in which $|S_i|= \sigma_i$.  The stabilizer of one such flag 
is the {\it Young} or {\it parabolic subgroup} $W_\alpha$ which
permutes separately the first $\alpha_1$ integers, the next $\alpha_2$ integers, etc.
Thus the coset space $W/W_\alpha$ is identified with the collection of these flags, and
hence has cardinality $[W:W_\alpha]=\binom{n}{\alpha}$.
When $q$ is a prime power, the $q$-multinomial analogously 
gives the cardinality of the finite partial flag manifold $G/P_\alpha$ where the group $G=GL_n(\FF_q)$
and $P_\alpha$ is the parabolic subgroup that stabilizes one of the flags of $\FF_q$-subspaces
$$
\{0\}=:V_0 \subset V_1 \subset \cdots \subset V_{\ell-1} \subset V_\ell:=\FF_q^n
$$ 
in which $\dim_{\FF_q} V_i = \sigma_i$.

On the other hand, one also has parallel Hilbert series interpretations
arising from the invariant theory of these groups acting on appropriate polynomial
algebras.  In the case of the $(q,t)$-multinomial this is where
its definition arose initially in work of the authors with D. White \cite[\S9]{RSW}.

Let $\ZZ[\xx]:=\ZZ[x_1,\ldots,x_n]$ carry its usual action of $W=\Symm_n$ by permutations 
of the variables, and let $\FF_q[\xx]:=\FF_q[x_1,\ldots,x_n]$ carry its usual action
of $G=GL_n(\mathbb{F}_q)$ by linear substitution of variables.  The {\it fundamental theorem of
symmetric functions} states that the invariant subring $\ZZ[\xx]^W$ is a polynomial algebra
generated by the elementary symmetric functions $e_1(\xx),\ldots,e_n(\xx)$.
A well-known theorem of Dickson (see e.g. \cite[\S 8.1]{Benson}) 
asserts that the invariant subring $\FF_q[\xx]^G$ is 
a polynomial algebra;  its generators can be chosen to be the {\it Dickson polynomials},
which are the same as Macdonald's polynomials $E_1(\xx),\ldots,E_n(\xx)$ 
discussed in Section~\ref{schur-function-section} above.  
It is also not hard to see that, for any composition $\alpha$ of $n$,
the invariant subring  $\ZZ[\xx]^{W_\alpha}$ is a polynomial algebra, whose
generators may be chosen as the elementary symmetric functions in the
first $\alpha_1$ variables, then those in the next $\alpha_2$ variables, etc.
The following result of Mui \cite{Mui} (see also Hewett \cite{Hewett})
is less obvious.

\begin{theorem} 
For every composition $\alpha$ of $n$,
the parabolic subgroup $P_\alpha$ 
has invariant subring $\FF_q[\xx]^{P_\alpha}$ isomorphic to a polynomial algebra 
$\mathbb{F}_q[f_1,\ldots,f_n]$.  Furthermore, the generators 
$f_1,\ldots,f_n$ may be chosen homogeneous with
degrees $q^{\sigma_s}-q^{\sigma_s-i}$ for
$s=1,\ldots,\ell$ and $i=1,\ldots, \alpha_s$.
\end{theorem}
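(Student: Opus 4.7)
My plan is to construct explicit generators of $\FF_q[\xx]^{P_\alpha}$ of the claimed degrees by iterating the Dickson construction along the $P_\alpha$-stable flag $V_1 \subset V_2 \subset \cdots \subset V_\ell = \FF_q[\xx]_1$, where $V_s$ is the $\sigma_s$-dimensional $\FF_q$-subspace stabilized by $P_\alpha$ (e.g.\ $V_s := \operatorname{span}_{\FF_q}(x_1,\ldots,x_{\sigma_s})$ in appropriate coordinates). At each stage Dickson's theorem produces the additive polynomial
\begin{equation*}
L_{s-1}(y) := \prod_{v \in V_{s-1}}(y - v),
\end{equation*}
whose coefficients in $y$ lie in $\FF_q[\xx]^{P_\alpha}$ since $P_\alpha$ preserves $V_{s-1}$ setwise. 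The image $W_s := L_{s-1}(V_s)$ is then an $\alpha_s$-dimensional $\FF_q$-subspace of $\FF_q[\xx]_{q^{\sigma_{s-1}}}$ on which $P_\alpha$ acts through the Levi quotient $GL(V_s/V_{s-1}) \cong GL_{\alpha_s}(\FF_q)$. A second application of Dickson's theorem to $W_s$ then yields
\begin{equation*}
\prod_{w \in W_s}(y - w) = \sum_{j=0}^{\alpha_s}(-1)^{\alpha_s - j}\, f_{s,\alpha_s - j}\, y^{q^j},
\end{equation*}
whose non-leading coefficients $f_{s,1},\ldots,f_{s,\alpha_s}$ are $P_\alpha$-invariant and homogeneous, with $f_{s,i}$ of degree $(q^{\alpha_s}-q^{\alpha_s-i})\cdot q^{\sigma_{s-1}} = q^{\sigma_s} - q^{\sigma_s - i}$, matching the claim.

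\textbf{Applying the polynomial-invariant criterion.} Having produced $n = \sum_s \alpha_s$ homogeneous $P_\alpha$-invariants $\{f_{s,i}\}$, a direct check gives $\prod_{s,i}(q^{\sigma_s} - q^{\sigma_s - i}) = |P_\alpha|$, by comparing the formula for $\qbin{n}{\alpha}{q} = |G/P_\alpha|$ with $|G| = \prod_{i=1}^n (q^n - q^{n-i})$. The classical invariant-theory criterion then reduces the theorem to showing that the $f_{s,i}$ form a homogeneous system of parameters in $\FF_q[\xx]$: once verified, the Cohen--Macaulayness of $\FF_q[\xx]$ forces the $f_{s,i}$ to be algebraically independent and $\FF_q[\xx]$ to be a free module of rank $|P_\alpha|$ over $\FF_q[f_{s,i}]$. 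Combined with the inclusion $\FF_q[f_{s,i}] \subseteq \FF_q[\xx]^{P_\alpha}$ and the fact $[\FF_q(\xx) : \FF_q(\xx)^{P_\alpha}] = |P_\alpha|$, this forces $\FF_q[f_{s,i}] = \FF_q[\xx]^{P_\alpha}$.

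\textbf{Main obstacle.} The technical heart of the argument is the system-of-parameters property: the common zero locus of the $f_{s,i}$ over $\overline{\FF_q}$ reduces to the origin. I would prove this by induction on $s$, with the inductive claim that if $f_{s',i}(p) = 0$ for all $s' \leq s$ and all $i$, then $v(p) = 0$ for every $v \in V_s$ (equivalently $x_j(p) = 0$ for $j \leq \sigma_s$). In the inductive step, the vanishing of $f_{s,1},\ldots,f_{s,\alpha_s}$ at $p$ collapses $\prod_{w \in W_s}(y - w)$ evaluated at $\xx = p$ to $y^{q^{\alpha_s}}$, forcing $w(p) = 0$ for every $w \in W_s$, i.e.\ $L_{s-1}(u)(p) = 0$ for every $u \in V_s$. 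The inductive hypothesis gives $v(p) = 0$ for all $v \in V_{s-1}$, so $L_{s-1}(u)(p) = \prod_{v \in V_{s-1}}(u(p) - v(p))$ collapses to $u(p)^{q^{\sigma_{s-1}}}$, forcing $u(p) = 0$ for every $u \in V_s$ and completing the induction (and hence the proof upon taking $s = \ell$).
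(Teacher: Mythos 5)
The paper does not prove this theorem; it quotes the result from Mui and Hewett without proof. Your argument is nonetheless correct and is essentially the standard one found in those references (Hewett's in particular follows this iterated Dickson construction): use the $P_\alpha$-stable flag $V_1 \subset \cdots \subset V_\ell$ in the degree-one part of $\FF_q[\xx]$, let $L_{s-1}(y) = \prod_{v\in V_{s-1}}(y-v)$, take the coefficients of the Dickson polynomial of the $\alpha_s$-dimensional image $W_s = L_{s-1}(V_s)$ as the generators indexed by $s$, and then invoke the classical criterion that $n$ homogeneous invariants forming an h.s.o.p.\ whose degrees multiply to $|P_\alpha|$ must generate the invariant ring. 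Your verification that each ingredient holds is sound: the degree count $(q^{\alpha_s}-q^{\alpha_s-i})\cdot q^{\sigma_{s-1}} = q^{\sigma_s}-q^{\sigma_s-i}$ is correct, the identity $\prod_{s,i}(q^{\sigma_s}-q^{\sigma_s-i}) = |P_\alpha|$ follows by comparing $|G|$ with $|G/P_\alpha| = \qbin{n}{\alpha}{q}$, and the system-of-parameters argument by downward propagation of vanishing along the flag (collapsing each Dickson polynomial to $y^{q^{\alpha_s}}$ and then $L_{s-1}(u)$ to $u^{q^{\sigma_{s-1}}}$ at a common zero) is exactly right. Two small points you glossed over but which are easy to supply: one should observe explicitly that $g\cdot L_{s-1}(v)=L_{s-1}(g\cdot v)$ for $g\in P_\alpha$ (because $g$ permutes the factors $y-u$, $u\in V_{s-1}$), which is what makes $W_s$ setwise $P_\alpha$-stable and hence makes the coefficients of $\prod_{w\in W_s}(y-w)$ invariant; and the flag $V_s$ you use sits in $\FF_q[\xx]_1$, which is the dual of the $\FF_q^n$ on which $P_\alpha$ is defined as a flag stabilizer, so "in appropriate coordinates" is doing real work -- the convention is consistent with the degree labeling in the statement, but the reader should be warned that the flag in $\FF_q[\xx]_1$ is the annihilator flag.
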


\begin{corollary}
\label{free}
For every composition $\alpha$ of $n$,
\begin{equation}
\label{quotients}
\begin{array}{rcl}
\qbin{n}{\alpha}{t} &= \frac{\Hilb(\ZZ[\xx]^{W_\alpha},t)}{\Hilb(\ZZ[\xx]^W,t)} \\
\qbin{n}{\alpha}{q,t} &= \frac{\Hilb(\FF_q[\xx]^{P_\alpha},t)}{\Hilb(\FF_q[\xx]^G,t)}.
\end{array}
\end{equation}
Furthermore,
$\ZZ[\xx]^{W_\alpha}, \FF_q[\xx]^{P_\alpha}$ are free as modules over
over $\ZZ[\xx]^W, \FF_q[\xx]^{GL_n}$, respectively, and hence
\begin{equation}
\label{re-intepretations}
\begin{array}{rcl}
\qbin{n}{\alpha}{t} &= \Hilb(\ZZ[\xx]^{W_\alpha}/(\ZZ[\xx]^W_+),t) \\
\qbin{n}{\alpha}{q,t} &= \Hilb(\FF_q[\xx]^{P_\alpha}/( \FF_q[\xx]^G_+), t). 
\end{array}
\end{equation}
\end{corollary}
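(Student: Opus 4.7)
The plan is to deduce both equations in \eqref{quotients} from explicit Hilbert series computations and then use a Cohen--Macaulay freeness argument to pass to \eqref{re-intepretations}.

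First, I would compute the Hilbert series of each of the four rings directly from its polynomial structure. The fundamental theorem of symmetric functions gives $\Hilb(\ZZ[\xx]^W, t) = \prod_{i=1}^{n}(1-t^i)^{-1}$, while $\ZZ[\xx]^{W_\alpha}$ is polynomial on the elementary symmetric functions in each of the $\ell$ blocks of variables, contributing factors $(1-t^i)^{-1}$ for $i=1,\ldots,\alpha_s$ and $s=1,\ldots,\ell$. For the modular case, Dickson's theorem and Mui's theorem (cited just above in the preceding theorem) supply generator degrees $q^n-q^{n-i}$ and $q^{\sigma_s}-q^{\sigma_s-i}$ respectively. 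Taking the ratios, the factors in $\Hilb(\ZZ[\xx]^{W_\alpha},t)/\Hilb(\ZZ[\xx]^W,t)$ and $\Hilb(\FF_q[\xx]^{P_\alpha},t)/\Hilb(\FF_q[\xx]^G,t)$ collapse to exactly the product formulas defining $\qbin{n}{\alpha}{t}$ and $\qbin{n}{\alpha}{q,t}$, which proves \eqref{quotients}.

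Next, I would establish the freeness assertions. In each pair the smaller ring is a graded polynomial algebra in $n$ generators, hence regular, and the larger ring is also polynomial in $n$ generators, hence Cohen--Macaulay of the same Krull dimension. Since $\ZZ[\xx]$ is a finite module over $\ZZ[\xx]^W$ (because $W=\Symm_n$ is a finite group acting on a polynomial ring), and $\ZZ[\xx]^{W_\alpha}$ is a graded $\ZZ[\xx]^W$-submodule of $\ZZ[\xx]$, the ring $\ZZ[\xx]^{W_\alpha}$ is finitely generated over $\ZZ[\xx]^W$; the parallel argument works for $\FF_q[\xx]^{P_\alpha}$ over $\FF_q[\xx]^G$. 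A finitely generated graded Cohen--Macaulay module of maximal dimension over a graded regular ring is free, by the Auslander--Buchsbaum formula (or equivalently, the graded local criterion of flatness). This gives freeness in both settings.

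Finally, freeness immediately yields \eqref{re-intepretations}: if $B$ is a graded free module over a graded polynomial subring $A$, then any homogeneous $A$-basis of $B$ projects to a homogeneous basis of $B/A_+ B$ over the base ring, and hence
\[
\Hilb(B, t) \;=\; \Hilb(A, t) \cdot \Hilb(B/A_+ B, t).
\]
Combining this with \eqref{quotients} yields the Hilbert series identities in \eqref{re-intepretations}. The main subtlety I expect is the freeness step in the modular setting, since Chevalley--Shephard--Todd does not apply directly to $P_\alpha$ acting on $\FF_q[\xx]$ in characteristic dividing $|P_\alpha|$; what rescues the argument is that the polynomial structure of both $\FF_q[\xx]^G$ and $\FF_q[\xx]^{P_\alpha}$ is already in hand from Dickson and Mui, so that the standard Cohen--Macaulay/Auslander--Buchsbaum argument applies without recourse to reflection-group techniques.
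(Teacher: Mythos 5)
Your proposal is correct and follows essentially the same route as the paper: compute the Hilbert series from the known polynomial generator degrees (fundamental theorem of symmetric functions, Dickson, Mui), then observe that a polynomial invariant subring is Cohen--Macaulay and finitely generated over the smaller polynomial invariant subring, so it is free over it, and the quotient Hilbert series factor accordingly. The paper phrases the freeness step via ``CM ring is free over the polynomial subalgebra generated by any homogeneous system of parameters'' rather than via Auslander--Buchsbaum, but these are the same argument in slightly different dress; your remark that the modular case is rescued by Mui's theorem supplying the polynomial structure (rather than by any reflection-group theorem) matches the paper's reliance on Mui exactly.
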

\begin{proof}
The Hilbert series for a graded polynomial algebra with generators in degrees
$d_1,\ldots,d_n$ is $\prod_{i=1}^n\frac{1}{1-t^{d_i}}$.  Hence the multinomial expressions
\eqref{quotients} for the quotient of Hilbert series follows in each case from consideration of the
degrees of the generators for $\ZZ[\xx]^W, \ZZ[\xx]^{W_\alpha}, \FF_q[\xx]^G, \FF_q[\xx]^{P_\alpha}$.

For the re-interpretations in \eqref{re-intepretations}, one needs a little
invariant theory and commutative algebra, such as can be found in
the book by Benson \cite{Benson} or the survey by Stanley \cite{Stanley-invariants}.
When two nested finite groups $H \subset G$ act on a Noetherian ring $R$, the invariant subring
$R^H$ is finitely generated as a module over $R^G$.  If $R^G$ is a polynomial
subalgebra, this means its generators will form a system of parameters for $R^H$.  When
$R^H$ is also polynomial, it is Cohen-Macaulay, and hence a free module over
the polynomial subalgebra generated by any system of parameters.  In this situation,
when the rings and group actions are all graded, a free basis for $R^H$ as an $R^G$-module 
can be obtained by lifting any basis
for $R^H/(R^G_+)$ as a module over the ring $R^G/R^G_+$ (which equals $\FF_q$ or $\ZZ$
in our setting).  Therefore
$$
\frac{\Hilb(R^H,t)}{\Hilb(R^G,t)}=\Hilb(R^H/(R^G_+),t).
$$
\end{proof}

\begin{remark}
It should be clear that the above Hilbert series interpretation of the $(q,t)$-multinomial
generalizes the ``$q=1$'' interpretation of the $t$-multinomial.  It turns out
that it also generalizes the interpretation of the $q$-multinomial as $[G:P_\alpha]=|G/P_\alpha|$ 
when $t=1$,
for the following reason:  when two nested finite subgroups $H \subset G \subset GL_n(k)$ act on $k[\xx]$,
one always has (see \cite[\S 2.5]{Benson})
$$
\lim_{t \rightarrow 1} \frac{\Hilb(k[\xx]^H,t)}{\Hilb(k[\xx]^G,t)} = [G:H].
$$
\end{remark}

\section{The $(q,t)$-analogues of $q^{\ell(w)}$}
\label{permutations-section}

  Corollary~\ref{free} interprets the $(q,t)$-multinomial algebraically when one specializes
$q$ to be a prime power.  Our goal here is a combinatorial interpretation valid in
general, generalizing Theorem~\ref{box-interpretation-theorem}.

  Given a composition $\alpha$ of $n$, recall that $W=\Symm_n$ has a parabolic subgroup $W_\alpha$
whose index $[W:W_\alpha]$ is given by the multinomial coefficient $\binom{n}{\alpha}$.
Viewing $W$ as a Coxeter group with the adjacent transpositions $(i,i+1)$ as
its usual set of Coxeter generators, one has its {\it length function}
$\ell(w)$ defined as the minimum length of $w$ as a product of these generators.
This is well-known to be the {\it inversion number}, counting pairs $1 \leq i < j \leq n$ with
$w(i) > w(j)$.  

  There are distinguished {\it minimum-length coset representatives}
$W^\alpha$ for $W/W_\alpha$:  a permutation $w$ lies in $W^\alpha$
if and only if 
$\alpha$ refines its {\it descent composition} 
$\beta(w)=(\beta_1,\ldots,\beta_\ell)$.
Recall that $\beta(w)$ is the composition of $n$ 
defined by the property that
$$
w(i) > w(i+1) \text{ if and only if }  
i \in 
\{\beta_1,\beta_1+\beta_2,\ldots,\beta_1+\beta_2+\cdots+\beta_{\ell-1}\}.
$$
Rephrased, $\beta(w)$ lists the lengths of the maximal increasing 
consecutive subsequences of  $w=(w(1),w(2),\ldots,w(n))$.

  It is well-known (see \cite[Prop. 1.3.17]{Stanley-EC1}) that 
\begin{equation}
\label{multinomial-interps}
\begin{aligned}
\binom{n}{\alpha} &= |W^\alpha| \\
\qbin{n}{\alpha}{q} &= \sum_{\substack{w \in W^\alpha}} q^{\ell(w)}.
\end{aligned}
\end{equation}

We wish to similarly express the $(q,t)$-multinomial as a sum over $w$ in 
$W^\alpha$ of a weight $\wt(w;q,t)$, simultaneously generalizing 
\eqref{multinomial-interps} and Theorem~\ref{compat}.  
The weight $\wt(w;q,t)$  will be defined recursively in a way that
generalizes the defining recurrence for $\wt(\lambda,k;q,t)$ in
Proposition~\ref{lambda-weight-recurrence} .  

Recall that when $\alpha=(k,n-k)$ has only two parts, there is a bijection 
$\lambda \leftrightarrow u_\lambda$ between partitions $\lambda$ inside a $k \times (n-k)$ rectangle and 
the minimum length coset representatives $u_\lambda$ in $W^\alpha$, determined by
$$
u_\lambda(i)=\lambda_{k+1-i}+i-1 \text{ for }i=1,2,\ldots,k.
$$
Now given any permutation $w$ in $W=\Symm_n$, if $k$ is defined by $w^{-1}(1)=k+1$, 
then taking $\alpha=(k,1,n-k-1)$, one can uniquely express
$$
w = u_\lambda \cdot a \cdot e \cdot b
$$
with 
$$
\ell(w) = \ell(u_\lambda) + \ell(a) + \ell(b)
$$ 
where 
\begin{enumerate}
\item[$\bullet$]
$u_\lambda \in W^{(k,n-k)}=\Symm_n^{(k,n-k)}$, 
\item[$\bullet$]
$a \in \Symm_{\{1,2,\ldots,k\}} \cong \Symm_k$,
\item[$\bullet$]
$e \in \Symm_{\{k+1\}} \cong \Symm_1$ (so $e$ is the identity permutation of $\{k+1\}$),
\item[$\bullet$]
$b \in \Symm_{\{k+2,k+3,\ldots,n\}} \cong \Symm_{n-k-1}$.
\end{enumerate}
Note also that in the above factorization $w=u_\lambda aeb$, by the definition of $k$,
one knows that $\lambda$ has its first column {\it full} of length $k$.  Let
$\hat\lambda$ denote the partition inside a 
$k \times (n-1-k)$ rectangle obtained from $\lambda$ by removing this first column,
so that $u_{\hat{\lambda}}$ lies in $\Symm_{n-1}^{(k,n-1-k)}$.

\begin{definition}
For $w \in \Symm_n$, define $\wt(w;q,t)$ in $\Qq(\ttt)$ {\it recursively} 
to be $1$ if $n=1$, and otherwise if $w^{-1}(1)=k+1$ set 
\begin{equation}
\label{first-w-recursion}
\wt(w;q,t) 
:= t^{q^k-1} \frac{k!_{q,t^q}}{k!_{q,t}} 
  \cdot \wt(u_{\hat{\lambda}}; q,t^q)  \wt(a;q,t)  \wt(b;q,t^{q^{k+1}})
\end{equation}
\end{definition}

\begin{example}
Let $n=8$ and choose
$$
w=\left( 
\begin{matrix} 
1 & 2 & 3 & 4 & 5 & 6 & 7 & 8 \\
5 & 2 & 7 & 4 & 1 & 3 & 8 & 6
\end{matrix}
\right)
$$
Then $k+1=w^{-1}(1)=5$, so that $k=4$, and the above factorization is
$$
\begin{aligned}
 w&=u_\lambda \cdot  a \cdot e \cdot  b\\
 &=\left(
\begin{matrix} 
1 & 2 & 3 & 4 & |&5 &|& 6 & 7 & 8 \\
2 & 4 & 5 & 7 & |&1 &|& 3 & 6 & 8
\end{matrix}
\right)
\cdot
\left(
\begin{matrix} 
1 & 2 & 3 & 4 \\
3 & 1 & 4 & 2  
\end{matrix}
\right)
\cdot
\left(
\begin{matrix} 
5 \\
5  
\end{matrix}
\right) 
\cdot
\left(
\begin{matrix} 
6 & 7 & 8 \\
6 & 8 & 7  
\end{matrix}
\right).
\end{aligned}
$$
Here $\lambda=(3,2,2,1)$, so that $\hat\lambda=(2,1,1,0)$ and
$$
u_{\hat\lambda}=
\left(
\begin{matrix} 
1 & 2 & 3 & 4 & |& 5 & 6 & 7 \\
1 & 3 & 4 & 6 & |& 2 & 5 & 7
\end{matrix}
\right).
$$
Then the recursive definition says
$$
\begin{aligned}
&\wt(w;q,t) \\
&:= t^{q^4-1} \frac{4!_{q,t^q}}{4!_{q,t}} 
  \cdot \wt(u_{\hat\lambda}; q,t^q)  \wt(a;q,t)  \wt(b;q,t^{q^{5}})\\
&:= t^{q^4-1} [q]_{t^{q^4-1}} [q]_{t^{q^4-q}} [q]_{t^{q^4-q^2}} [q]_{t^{q^4-q^3}} 
  \cdot \wt(u_{\hat\lambda}; q,t^q)  \wt(a;q,t)  \wt(b;q,t^{q^{5}}).
\end{aligned}
$$
where we regard $b$ as an element of $\Symm_3$.
\end{example}

\begin{proposition}
\label{form-of-w(q,t)}
Given any $w$ in $\Symm_n$, the weight $\wt(w;q,t)$ for integers $q \geq 2$ 
is a polynomial in $t$ with nonnegative coefficients, taking the following form
$$
\wt(w;q,t) = t^{x} \prod_{i=1}^{\ell(w)} [q]_{t^{q^{y_i}-q^{z_i}}}
$$
for some nonnegative integers $x, y_i, z_i$ with $y_i > z_i$ for all $i$.
Furthermore,
\begin{equation}
\lim_{t \rightarrow 1} \wt(w; q,t) = q^{\ell(w)}
\quad \text{ and } \quad
\lim_{q \rightarrow 1} \wt(w; q,t^{\frac{1}{q-1}}) = t^{\ell(w)}.
\end{equation}

\end{proposition}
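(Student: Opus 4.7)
The proof proceeds by induction on $n$, using the recursive definition \eqref{first-w-recursion} as the basic tool. The base case $n=1$ is immediate: $\wt(w;q,t)=1$ is the empty product, $\ell(w)=0$, and both limits give $1$ trivially.

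For the inductive step with $n \geq 2$ and $k$ defined by $w^{-1}(1)=k+1$, I would analyze each of the four factors in the recurrence in turn. The leading $t^{q^k-1}$ is a pure $t$-power of the form $t^{q^y-q^z}$ with $y=k$ and $z=0$. The factorial quotient $k!_{q,t^q}/k!_{q,t}$, by \eqref{factorial-quotient}, equals $\prod_{i=1}^{k}[q]_{t^{q^k-q^{i-1}}}$, which contributes exactly $k$ bracket factors of the required form. The three recursive factors $\wt(u_{\hat\lambda};q,t^q)$, $\wt(a;q,t)$, and $\wt(b;q,t^{q^{k+1}})$ are, by the inductive hypothesis applied to $\Symm_{n-1}$, $\Symm_{k}$, and $\Symm_{n-k-1}$ respectively, each already in the asserted form; the Frobenius substitutions $t\mapsto t^{q}$ and $t\mapsto t^{q^{k+1}}$ simply shift the exponents uniformly in every $t^{q^{y_i}-q^{z_i}}$ and $[q]_{t^{q^{y_i}-q^{z_i}}}$, preserving both the product structure and the condition $y_i>z_i\geq 0$.

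Summing up the bracket factors gives $k + \ell(u_{\hat\lambda})+\ell(a)+\ell(b)=k+|\hat\lambda|+\ell(a)+\ell(b)$. Because $\lambda$ has a full first column of height $k$, we have $|\lambda|=k+|\hat\lambda|$, and since $\ell(u_\lambda)=|\lambda|$, the length additivity $\ell(w)=\ell(u_\lambda)+\ell(a)+\ell(b)$ from the factorization makes the total count exactly $\ell(w)$, as required. Positivity for integer $q\geq 2$ is then automatic, since each $[q]_{t^N}$ and each $t^N$ with $N\geq 0$ is a polynomial in $t$ with nonnegative coefficients; the condition $y_i>z_i$ ensures the exponent $q^{y_i}-q^{z_i}$ is a positive integer for integer $q\geq 2$.

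For the two limits I apply the basic evaluations collected in \eqref{basic-limits} factor-by-factor. Under $t\to 1$, every $[q]_{t^{q^y-q^z}}$ tends to $q$ and every $t^{q^y-q^z}$ tends to $1$; inductively the four recursive factors contribute $q^{k}$, $q^{|\hat\lambda|}$, $q^{\ell(a)}$, $q^{\ell(b)}$, multiplying out to $q^{\ell(w)}$. The $q\to 1$ limit after $t\mapsto t^{1/(q-1)}$ is the more delicate case, since there each $[q]_{t^{q^y-q^z}}$ contributes $1$ and the final $t$-power must be assembled entirely from the pure $t$-factors. The main obstacle, and the technical heart of the proof, is seeing that the Frobenius twists do not disturb this accounting: under the homomorphism of Section~\ref{limits-section} sending $t^{q^r}\mapsto t^r$, the substitutions $t\mapsto t^{q}$ and $t\mapsto t^{q^{k+1}}$ shift both $y$ and $z$ in any $t^{q^y-q^z}$ by the same integer, leaving the difference $y-z$ invariant in the limit. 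Thus $t^{q^k-1}\mapsto t^k$, and the three recursive pieces contribute $t^{|\hat\lambda|}$, $t^{\ell(a)}$, $t^{\ell(b)}$ by induction, summing to $t^{\ell(w)}$.
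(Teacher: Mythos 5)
Your proof is correct and is essentially the paper's argument, which only sketches the induction using \eqref{first-w-recursion}, \eqref{factorial-quotient}, the length additivity $\ell(w)=k+\ell(u_{\hat\lambda})+\ell(a)+\ell(b)$, and \eqref{basic-limits}; you carry out the factor-by-factor details explicitly. One small point in your favor: inducting on $n$ as you do is slightly cleaner than inducting on $\ell(w)$ as the paper states, since when $k=0$ (i.e.\ $w(1)=1$) the recursion passes from $w$ to $b\in\Symm_{n-1}$ with $\ell(b)=\ell(w)$, so $\ell(w)$ does not strictly decrease, whereas $n$ does.
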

\begin{proof}
For all of these assertions, induct on $\ell(w)$, using
the fact that 
$$
\begin{aligned}
\ell(w) & = \ell(u_\lambda) + \ell(a) + \ell(b) \\
        & = k + \ell(u_{\hat\lambda}) + \ell(a) + \ell(b)
\end{aligned}
$$
along with the recursive definition \eqref{first-w-recursion}, 
equation \eqref{factorial-quotient}, and the limits in \eqref{basic-limits}.
\end{proof}

%
%

We first show that this recursively defined $\wt(w;q,t)$ coincides
with $\wt(\lambda,k;q,t)$ when $w=u_\lambda$.

\begin{proposition}
\label{two-part-special-case}
For any partition $\mu$ inside a $k \times (n-k)$ rectangle, one
has $\wt(u_\mu;q,t)=\wt(\mu,k;q,t)$.  Consequently,
$$
\qbin{n}{k}{q,t} = \sum_{u_\mu \in \Symm_n^{(k,n-k)}} \wt( u_\mu; q,t).
$$
\end{proposition}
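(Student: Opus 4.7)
The plan is to induct on $n$, comparing the recursive definition \eqref{first-w-recursion} of $\wt(u_\mu;q,t)$ with the recurrence for $\wt(\mu,k;q,t)$ given in Proposition~\ref{lambda-weight-recurrence}. The base case $n=1$ is immediate: $\mu=\varnothing$ and $u_\mu=\mathrm{id}$ both carry weight $1$. The key observation driving the induction is that the two cases of Proposition~\ref{lambda-weight-recurrence}---whether or not $\mu$ has a full first column---correspond precisely to the two qualitatively different ways the recursion \eqref{first-w-recursion} applies to $u_\mu$, controlled by the position of $u_\mu^{-1}(1)$.

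In the first case, when $\mu_k \geq 1$, I will read off from the defining formula for $u_\mu$ that $u_\mu(i)\geq 2$ for all $i\leq k$, forcing $u_\mu^{-1}(1)=k+1$. Thus the integer ``$k$'' appearing in the recursion \eqref{first-w-recursion} agrees with the height of the rectangle, and since $u_\mu$ already lies in $\Symm_n^{(k,n-k)}$, the length-additive factorization $u_\mu = u_\lambda\cdot a\cdot e\cdot b$ forces $\lambda=\mu$ and $a=b=\mathrm{id}$. The recursion then collapses to
$$
\wt(u_\mu;q,t) = t^{q^k-1}\,\frac{k!_{q,t^q}}{k!_{q,t}}\,\wt(u_{\hat\mu};q,t^q),
$$
and matching this against the first branch of Proposition~\ref{lambda-weight-recurrence} after applying the inductive hypothesis to $u_{\hat\mu} \in \Symm_{n-1}^{(k,n-1-k)}$ will give the desired equality.

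In the second case, $\mu_k=0$, one has $u_\mu(1)=1$ and hence $u_\mu^{-1}(1)=1$, so the ``$k$'' in the recursion degenerates to $0$. In this degenerate factorization $u_\lambda$, $a$, and $e$ are all trivial, and $u_\mu$ is carried entirely by $b$, the restriction to $\{2,\ldots,n\}$. After shifting indices down by $1$, $b$ becomes a permutation $b' \in \Symm_{n-1}^{(k-1,n-k)}$, and a direct index chase using the formula for $u_\mu$ shows that $b'$ is the minimum-length coset representative attached to the same partition $\mu$ viewed inside the smaller $(k-1)\times(n-k)$ rectangle. The recursion then yields $\wt(u_\mu;q,t) = \wt(b';q,t^q)$, which by the inductive hypothesis equals $\wt(\mu,k-1;q,t^q)$, matching the second branch of Proposition~\ref{lambda-weight-recurrence}.

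The main obstacle I anticipate is the bookkeeping in this second case: one must verify carefully that $b$ reinterpreted on $\Symm_{n-1}$ really is the coset representative for $\mu$ in the smaller rectangle, and that the Frobenius twist $t\mapsto t^q$ entering the recursion aligns precisely with reducing the row-count $k$ by one in the partition weight. Once the pointwise equality $\wt(u_\mu;q,t)=\wt(\mu,k;q,t)$ is established, the ``consequently'' statement is immediate, since the bijection $\mu \leftrightarrow u_\mu$ transports the identity of Theorem~\ref{box-interpretation-theorem} to the claimed sum over $\Symm_n^{(k,n-k)}$.
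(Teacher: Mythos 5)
Your proposal is correct and takes essentially the same route as the paper: you induct on $n$ by comparing the recursion \eqref{first-w-recursion} for $\wt(u_\mu;q,t)$ against the two branches of Proposition~\ref{lambda-weight-recurrence}, splitting on whether $\mu$ has a full first column (equivalently whether $u_\mu^{-1}(1)=k+1$ or $1$). The paper's proof states these two case identities and leaves the verification to the reader, whereas you fill in the bookkeeping---in particular the reindexing of $b$ to $b'\in\Symm_{n-1}^{(k-1,n-k)}$ in the degenerate $u_\mu^{-1}(1)=1$ case---which is exactly the detail the paper elides with ``one can check.''
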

\begin{proof}
One checks that $\wt(u_\mu;q,t)$ satisfies the same defining
recursion \eqref{lambda-weight-recurrence} as $\wt(\mu;q,t)$.
Temporarily denote
$$
\wt(u_\mu,k;q,t):=\wt(u_\mu; q,t) 
$$
to emphasize the dependence on $k$.  The fact that $w=u_\mu$ lies
in $W^{(k,n-k)}$ implies either $w^{-1}(1)=k+1$ or $1$, 
depending upon whether or not $\mu$ has its first column full of length $k$.
In the former case, one can check that the recursion \eqref{first-w-recursion}
gives
$$
\wt(u_\mu,k;q,t)
= t^{q^k-1} \frac{k!_{q,t^q}}{k!_{q,t}} \cdot \wt(u_{\hat\mu},k;q,t^q) 
$$
and in the latter case that it gives
$$
\wt(u_\mu,k;q,t)
= \wt(u_{\hat\mu},k-1;q,t^q),
$$
as desired.  Thus the equality $\wt(u_\mu,k;q,t)= \wt(\mu,k;q,t)$
follows by induction on $n$.  

The last assertion of the proposition
is simply the restatement of Theorem~\ref{box-interpretation-theorem}.
\end{proof}

In order to generalize Proposition~\ref{two-part-special-case}  
to the $(q,t)$-multinomial, it helps
to have a {\it multinomial $(q,t)$-Pascal relation}.
\begin{proposition} 
\label{multinomial-qtpascal}
For any composition $\alpha=(\alpha_1,\ldots,\alpha_\ell)$ of $n$,
with partial sums $\sigma_s=\sum_{i=1}^s \alpha_s$ (and $\sigma_0:=0$), one has
$$
\qbin{n}{\alpha}{q,t} =
\sum_{i=1}^\ell  
   t^{q^{\sigma_{i-1}}-1} \frac{(\alpha_1,\alpha_2,\ldots,\alpha_{i-1})!_{q,t^q}}
                     {(\alpha_1,\alpha_2,\ldots,\alpha_{i-1})!_{q,t}}
      \qbin{n-1}{\alpha_1,\ldots,\alpha_{i-1},\alpha_i-1,\alpha_{i+1},\ldots,\alpha_n}{q,t^q}
$$
where we recall that
$
\alpha!_{q,t}:=\alpha_1!_{q,t} \, \cdot \, \alpha_2!_{q,t^{q^{\sigma_1}}}  \, \cdot \, 
                \alpha_3!_{q,t^{q^{\sigma_2}}}  \, \cdots  \, \alpha_\ell!_{q,t^{q^{\sigma_{\ell-1}}}}.
$
\end{proposition}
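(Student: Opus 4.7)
The plan is to prove Proposition~\ref{multinomial-qtpascal} by a direct computation in $\Qq(\ttt)$ that reveals a telescoping structure, in the same spirit as the proof of Proposition~\ref{qtpascal} but now exploiting the multiplicative factorization of the multinomial factorial $\alpha!_{q,t}$.

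First I would clear denominators by multiplying both sides by $\alpha!_{q,t}/(n-1)!_{q,t^q}$. Using the basic identity $n!_{q,t} = (1-t^{q^n-1}) \cdot (n-1)!_{q,t^q}$, the left-hand side becomes simply $1-t^{q^n-1}$. On the right, the $i$-th summand becomes
$$
t^{q^{\sigma_{i-1}}-1}\cdot \frac{(\alpha_1,\ldots,\alpha_{i-1})!_{q,t^q}}{(\alpha_1,\ldots,\alpha_{i-1})!_{q,t}}\cdot \frac{\alpha!_{q,t}}{(\alpha_1,\ldots,\alpha_i-1,\ldots,\alpha_\ell)!_{q,t^q}}.
$$

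The crux is to simplify this ratio of three nested factorials. The key observation is that, writing $\alpha'=(\alpha_1,\ldots,\alpha_i-1,\ldots,\alpha_\ell)$ with partial sums $\sigma'_s$, one has $\sigma'_s=\sigma_s$ for $s<i$ and $\sigma'_s=\sigma_s-1$ for $s\ge i$. Since $(t^q)^{q^{\sigma_s-1}}=t^{q^{\sigma_s}}$, the factors of index $s>i$ in the denominator $\alpha'!_{q,t^q}$ coincide exactly with the matching factors of $\alpha!_{q,t}$ in the numerator, and cancel. Then the residual $s<i$ factors of $\alpha!_{q,t}$ cancel with those of $(\alpha_1,\ldots,\alpha_{i-1})!_{q,t}$, while the $s<i$ factors of $(\alpha_1,\ldots,\alpha_{i-1})!_{q,t^q}$ cancel with the $s<i$ factors of $\alpha'!_{q,t^q}$ (they are the same Frobenius shift $\alpha_s!_{q,t^{q^{\sigma_{s-1}+1}}}$). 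What survives is just the single $s=i$ contribution
$$
t^{q^{\sigma_{i-1}}-1}\cdot \frac{\alpha_i!_{q,t^{q^{\sigma_{i-1}}}}}{(\alpha_i-1)!_{q,t^{q^{\sigma_{i-1}+1}}}}.
$$
Now the recurrence $\alpha_i!_{q,u}=(1-u^{q^{\alpha_i}-1})\,(\alpha_i-1)!_{q,u^q}$, applied with $u=t^{q^{\sigma_{i-1}}}$, collapses this ratio to $1-t^{q^{\sigma_i}-q^{\sigma_{i-1}}}$, and multiplying by the prefactor yields
$$
t^{q^{\sigma_{i-1}}-1}\bigl(1-t^{q^{\sigma_i}-q^{\sigma_{i-1}}}\bigr)=t^{q^{\sigma_{i-1}}-1}-t^{q^{\sigma_i}-1}.
$$

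Finally, summing over $i=1,\ldots,\ell$ telescopes to $t^{q^{\sigma_0}-1}-t^{q^{\sigma_\ell}-1}=1-t^{q^n-1}$, matching the reduced left-hand side. The main obstacle is purely notational: tracking the three different Frobenius-shifted multinomial factorials simultaneously and verifying the cancellation. Once the telescoping structure is spotted, the algebra reduces to a single application of the same $\alpha!_{q,u}=(1-u^{q^\alpha-1})(\alpha-1)!_{q,u^q}$ identity that underlies the binomial case.
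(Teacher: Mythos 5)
Your proof is correct, and it takes a genuinely different route from the paper. The paper proves Proposition~\ref{multinomial-qtpascal} by induction on $\ell$: it writes $\alpha = (\alpha_1, \hat\alpha)$, factors the $(q,t)$-multinomial as $\qbin{n}{\alpha_1}{q,t}\varphi^{\alpha_1}\qbin{n-\alpha_1}{\hat\alpha}{q,t}$, applies the first binomial $(q,t)$-Pascal relation from Proposition~\ref{qtpascal} to the outer factor, and then invokes the inductive hypothesis on $\hat\alpha$. Your approach instead clears denominators and computes directly, exposing a clean telescoping identity $\sum_i (t^{q^{\sigma_{i-1}}-1}-t^{q^{\sigma_i}-1})=1-t^{q^n-1}$ that the inductive argument keeps hidden. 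I checked the three-way cancellation among $\alpha!_{q,t}$, $(\alpha_1,\ldots,\alpha_{i-1})!_{q,t^q}/(\alpha_1,\ldots,\alpha_{i-1})!_{q,t}$, and $\alpha'!_{q,t^q}$: using $\sigma'_{s}=\sigma_s$ for $s<i$ and $\sigma'_{s}=\sigma_s-1$ for $s\ge i$, together with $(t^q)^{q^{r}}=t^{q^{r+1}}$, the $s>i$ and $s<i$ factors cancel exactly as you describe, leaving only $\alpha_i!_{q,t^{q^{\sigma_{i-1}}}}/(\alpha_i-1)!_{q,t^{q^{\sigma_{i-1}+1}}} = 1-t^{q^{\sigma_i}-q^{\sigma_{i-1}}}$. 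The trade-off is that the paper's induction reuses Proposition~\ref{qtpascal} as a black box and so is short to state, whereas your direct computation is self-contained, makes the Pascal-type identity a one-step consequence of the factorial recursion $m!_{q,u}=(1-u^{q^m-1})(m-1)!_{q,u^q}$, and simultaneously reproves the binomial case $\ell=2$ rather than depending on it. One small point worth making explicit in a write-up: if some $\alpha_i=0$ then the summand on the right should be read as $0$ (the reduced composition has a negative part), which is consistent with your telescoping difference vanishing since $\sigma_i=\sigma_{i-1}$.
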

\begin{proof}
Induct on $\ell$, with the base case $\ell=2$ being the first of the
two $(q,t)$-Pascal relations from Proposition~\ref{qtpascal}.  In the inductive
step, write $\alpha=(\alpha_1,\hat\alpha)$ where $\hat\alpha:=(\alpha_2,\ldots,\alpha_\ell)$
is a composition of $n-\alpha_1$.  Beginning with
\eqref{multinomial-in-terms-of-binomials}, start manipulating as follows:
$$
\begin{aligned}
\qbin{n}{\alpha}{q,t} 
& =\qbin{n}{\alpha_1}{q,t}
  \varphi^{\alpha_1} \qbin{n-\alpha_1}{\hat\alpha}{q,t}\\
& =\left( \qbin{n-1}{\alpha_1-1}{q,t^q} 
        + t^{q^{\alpha_1}-1} \frac{\alpha_1!_{q,t^q}}{\alpha_1!_{q,t}} \qbin{n-1}{\alpha_1}{q,t^q}
   \right)
  \varphi^{\alpha_1} \qbin{n-\alpha_1}{\hat\alpha}{q,t}\\
& =\qbin{n-1}{\alpha_1-1}{q,t^q} \varphi^{\alpha_1} \qbin{n-\alpha_1}{\hat\alpha}{q,t} +\\
&    \qquad t^{q^{\alpha_1}-1} \frac{\alpha_1!_{q,t^q}}{\alpha_1!_{q,t}} \qbin{n-1}{\alpha_1}{q,t^q}
       \varphi^{\alpha_1} \qbin{n-\alpha_1}{\hat\alpha}{q,t}
\end{aligned}
$$
The first summand is exactly 
$$
\qbin{n-1}{\alpha_1-1,\hat\alpha}{q,t^q}
$$
which is the $i=1$ term in the proposition.  If one applies the inductive hypothesis
to $\qbin{n-\alpha_1}{\hat\alpha}{q,t}$ in the second summand, one obtains a sum of
$\ell-1$ terms.  When multiplied by the other factors in the second summand,
these give the desired remaining terms $i=2,3,\ldots,\ell$ in the proposition.
\end{proof}

\begin{theorem}
\label{multinomial-as-w(q,t)-sum}
For any composition $\alpha$ of $n$, and $W=\Symm_n$,
$$
\qbin{n}{\alpha}{q,t} = \sum_{w \in W^{\alpha}} \wt(w;q,t).
$$
\end{theorem}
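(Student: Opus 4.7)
The plan is to proceed by induction on $n$, partitioning $W^\alpha$ according to the position $w^{-1}(1)$ and matching term-by-term with the multinomial $(q,t)$-Pascal relation of Proposition~\ref{multinomial-qtpascal}. The base case $n=1$ is immediate. For the inductive step, observe that any $w\in W^\alpha$ satisfies $w^{-1}(1)=\sigma_{i-1}+1$ for some $i\in\{1,\ldots,\ell\}$: since $1$ is the minimum value, a descent of $w$ is forced just before its position (unless $w^{-1}(1)=1$), and because $\alpha$ refines $\beta(w)$ that descent must land at one of the breakpoints $\sigma_1,\ldots,\sigma_{\ell-1}$.

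Fix such an $i$ and set $k:=\sigma_{i-1}$. I claim the decomposition $w=u_\lambda\cdot a\cdot e\cdot b$ (in which $\lambda$ automatically has a full first column of length $k$, because $w^{-1}(1)=k+1$) restricts to a bijection between permutations $w\in W^\alpha$ with $w^{-1}(1)=k+1$ and triples $(u_{\hat\lambda},a,b)$ satisfying $u_{\hat\lambda}\in\Symm_{n-1}^{(k,n-1-k)}$, $a\in\Symm_k^{(\alpha_1,\ldots,\alpha_{i-1})}$, and $b\in\Symm_{n-k-1}^{(\alpha_i-1,\alpha_{i+1},\ldots,\alpha_\ell)}$. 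The justification is that $u_\lambda$ is increasing on $\{1,\ldots,k\}$ and on $\{k+2,\ldots,n\}$, so the descent set of $w$ within each of these two blocks coincides with the descent set of $a$ and of $b$ respectively, and the condition ``$\alpha$ refines $\beta(w)$'' decouples into the asserted independent conditions on $a$ and $b$. Using the recursion \eqref{first-w-recursion} and this bijection, the fiber sum factors as
\begin{equation*}
\sum_{\substack{w\in W^\alpha\\ w^{-1}(1)=k+1}}\wt(w;q,t)
\;=\;t^{q^k-1}\,\frac{k!_{q,t^q}}{k!_{q,t}}\,S_1\cdot S_2\cdot S_3,
\end{equation*}
where $S_1=\sum_{u_{\hat\lambda}}\wt(u_{\hat\lambda};q,t^q)$, $S_2=\sum_a \wt(a;q,t)$, and $S_3=\sum_b \wt(b;q,t^{q^{k+1}})$.

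Proposition~\ref{two-part-special-case} identifies $S_1=\qbin{n-1}{k}{q,t^q}$, while the inductive hypothesis applied to the smaller ranks $k$ and $n-k-1$ gives $S_2=\qbin{k}{\alpha_1,\ldots,\alpha_{i-1}}{q,t}$ and $S_3=\qbin{n-1-k}{\alpha_i-1,\alpha_{i+1},\ldots,\alpha_\ell}{q,t^{q^{k+1}}}$. Combining these with the product decomposition \eqref{multinomial-in-terms-of-binomials} applied to $\qbin{n-1}{\alpha_1,\ldots,\alpha_i-1,\ldots,\alpha_\ell}{q,t^q}$ (split at the $k$-th position) and the elementary identity $\qbin{k}{\alpha_1,\ldots,\alpha_{i-1}}{q,t}=k!_{q,t}/(\alpha_1,\ldots,\alpha_{i-1})!_{q,t}$, the fiber sum rewrites as exactly the $i$-th summand on the right side of Proposition~\ref{multinomial-qtpascal}. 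Summing over $i=1,\ldots,\ell$ yields $\qbin{n}{\alpha}{q,t}$ and closes the induction.

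The main obstacle I anticipate is the bookkeeping that links the fiber factorization with the Pascal relation: one must check that the Frobenius twist $t^{q^{k+1}}$ appearing in $S_3$ aligns with the twist $t^q$ demanded by the Pascal expansion of the $(n-1)$-multinomial, and that the ratio $k!_{q,t^q}/k!_{q,t}$ combined with the $S_2$-factor produces the correct prefactor $(\alpha_1,\ldots,\alpha_{i-1})!_{q,t^q}/(\alpha_1,\ldots,\alpha_{i-1})!_{q,t}$ demanded by Proposition~\ref{multinomial-qtpascal}. Both reductions are purely algebraic but require careful tracking of which variables get substituted where; once verified, the combinatorial bijection and the inductive hypothesis mesh cleanly to give the result.
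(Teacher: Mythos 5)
Your proposal is correct and follows essentially the same route as the paper's proof: induction on $n$, partitioning $W^\alpha$ by $w^{-1}(1)=\sigma_{i-1}+1$, factoring the fiber sum via the recursion \eqref{first-w-recursion} into $S_1\cdot S_2\cdot S_3$, evaluating those with Proposition~\ref{two-part-special-case} and the inductive hypothesis, and matching the result to the $i$-th term of Proposition~\ref{multinomial-qtpascal}. Your ``anticipated obstacle'' about the Frobenius twists and factorial ratios is resolved exactly as the paper does, by the factorial identity $(\alpha',\alpha'')!_{q,t^q}=\alpha'!_{q,t^q}\cdot\alpha''!_{q,t^{q^{k+1}}}$.
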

\begin{proof}
Induct on $n$, with the base case $n=1$ being trivial.
If $\alpha=(\alpha_1,\ldots,\alpha_\ell)$ then one can
group the terms in the sum on the right into the subsums
\begin{equation}
\label{subsum}
\sum_{\substack{w \in \Symm_n^{\alpha}:\\ 
        w^{-1}(1) = \sigma_{i-1} + 1}}
       \wt(w;q,t)
\end{equation}
for $i=1,2,\ldots,\ell$.  Introducing the following notations
$$
\begin{aligned}
k &:= \sigma_{i-1}\\
\alpha' &:= (\alpha_1,\alpha_2,\ldots,\alpha_{i-1})\\
\alpha''&:= (\alpha_i-1,\alpha_{i+1},\alpha_{i+2},\ldots,\alpha_{\ell})\\
\end{aligned}
$$
we wish to show that the subsum \eqref{subsum} equals
the following term from the right side of Proposition~\ref{multinomial-qtpascal}:
\begin{equation}
\label{desired-Pascal-term}
t^{q^k-1} \frac{\alpha'!_{q,t^q}}{\alpha'!_{q,t}} 
      \qbin{n-1}{\alpha',\alpha''}{q,t^q}.
\end{equation}

Note that when $w^{-1}(1) = k + 1$,
the recursive definition of $\wt(w;q,t)$ says
$$
\wt(w;q,t) = t^{q^k-1} \frac{k!_{q,t^q}}{k!_{q,t}} \cdot \wt(u;q,t^q) \wt(a; q,t) \wt(b; q,t^{q^{k+1}})
$$
where $u \in \Symm_{n-1}^{(k,n-1-k)}, a \in  \Symm_k^{\alpha'}, b\in  \Symm_{n-1-k}^{\alpha''}.$
Thus one can rewrite \eqref{subsum} as
$$
\begin{aligned}
&t^{q^k-1} \frac{k!_{q,t^q}}{k!_{q,t}}
\sum_{u \in \Symm_{n-1}^{(k,n-1-k)}}  \wt(u;q,t^q)
\sum_{a \in  \Symm_n^{\alpha'} } \wt(a;q,t)
\sum_{b\in  \Symm_n^{\alpha''} } \wt(b;q,t^{q^{k+1}}) \\
&=t^{q^k-1} \frac{k!_{q,t^q}}{k!_{q,t}}
\qbin{n-1}{k,n-1-k}{q,t^q}
\qbin{k}{\alpha'}{q,t}
\qbin{n-1-k}{\alpha''}{q,t^{q^{k+1}}}\\
&=t^{q^k-1} \frac{(n-1)!_{q,t^q}} {\alpha'!_{q,t} \alpha''!_{q,t^{q^{k+1}}}} \\
&=t^{q^k-1} \frac{\alpha'!_{q,t^q}}{\alpha'!_{q,t}} \qbin{n-1}{\alpha',\alpha''}{q,t^q}
\end{aligned}
$$
in which the first equality replaced all three sums;
Proposition~\ref{two-part-special-case} was used to replace the first sum,
while the inductive hypothesis was used to replace the second and third sums.
\end{proof}

\section{Ribbon numbers and descent classes}
\label{Macmahon-section}

Recall that the minimum-length coset representatives $W^\alpha$ for $W/W_\alpha$ are
the permutations $w$ in $W=\Symm_n$ whose descent composition
$\beta(w)$ is refined by $\alpha$.  The set of permutations $w$ for which
$\beta(w)=\alpha$ is sometimes called a {\it descent class}.
We define in terms of these classes
the {\it ribbon}, {\it $q$-ribbon}, and {\it $(q,t)$-ribbon numbers} 
for a composition $\alpha$ of $n$:
\begin{equation}
\label{ribbon-number-definitions}
\begin{aligned}
r_\alpha   &:=| \{w \in W:\alpha = \beta(w)\} |, \\
r_\alpha(q)&:= \sum_{\substack{w \in W:\\ \alpha = \beta(w)}} q^{\ell(w)},\\
r_\alpha(q,t)&:= \sum_{\substack{w \in W:\\ \alpha = \beta(w)}} \wt(w; q,t).\\
\end{aligned}
\end{equation}
Recall that the partial order by refinement on the $2^{n-1}$ compositions $\alpha$ of
$n$ is isomorphic to the partial order by inclusion of their subsets of
partial sums 
$$
\{\alpha_1,\alpha_1+\alpha_2,\ldots,\alpha_1+\cdots+\alpha_{\ell-1}\}.
$$
From \eqref{multinomial-interps} and Theorem~\ref{multinomial-as-w(q,t)-sum} 
it should be clear that there is an inclusion-exclusion relation between
these three kinds of the ribbon numbers and three kinds of
multinomials (ordinary, $q$-, and $(q,t)$-multinomials).

However, it turns out that the inclusion-exclusion formula for the ribbons
collates into a determinantal formula involving factorials.  This determinant
for ribbon numbers goes back to MacMahon, for $q$-ribbon numbers 
to Stanley (see \cite[Examples 2.2.5]{Stanley-EC1}), and for $(q,t)$-ribbon numbers 
is new, although all three are proven in the same way; 
see Stanley \cite[Examples 2.2.4,2.2.5]{Stanley-EC1}).

\begin{proposition}
\label{Macmahon-determinant}
For any composition $\alpha=(\alpha_1,\ldots,\alpha_\ell)$ of $n$, with
partial sums $\sigma_i:=\sum_{j=1}^i \alpha_j$, one has
$$
\begin{aligned}
r_\alpha &= \sum_{\beta \text{ refined by } \alpha} (-1)^{\ell(\alpha)-\ell(\beta)} \binom{n}{\beta}
 = n!    \det\left(  
                    \frac{1}{(\sigma_j-\sigma_{i-1})!}
                  \right)_{i,j=1}^{\ell(\alpha)}\\
r_\alpha(q) &= \sum_{\beta \text{ refined by } \alpha} (-1)^{\ell(\alpha)-\ell(\beta)} \qbin{n}{\beta}{q}
 = [n]!_q    \det\left(  
                    \frac{1}{[\sigma_j-\sigma_{i-1}]!_{q}}
                  \right)_{i,j=1}^{\ell(\alpha)}\\
r_\alpha(q,t) &= \sum_{\beta \text{ refined by } \alpha} (-1)^{\ell(\alpha)-\ell(\beta)} \qbin{n}{\beta}{q,t}
 = n!_{q,t}    \det\left(  
                    \varphi ^{\sigma_{i-1}} \frac{1}{(\sigma_j-\sigma_{i-1})!_{q,t}}
                  \right)_{i,j=1}^{\ell(\alpha)}\\
\end{aligned}
$$
where $[m]!_q:=1(1+q)(1+q+q^2) \cdots (1+q+q^2+\cdots+q^{m-1})$.
\end{proposition}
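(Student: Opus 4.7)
The plan is to prove all three identities in parallel, since the arguments have the same structure. We first establish the middle (inclusion-exclusion) equalities by Möbius inversion, and then convert these into the determinantal expressions via Leibniz expansion combined with a combinatorial lemma.

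\textbf{Step 1 (inclusion-exclusion via Möbius).} From the characterization of $W^\alpha$ as $\{w\in W:\alpha\text{ refines }\beta(w)\}$, one has
$$\binom{n}{\alpha} = \sum_{\alpha \text{ refines } \beta} r_\beta,\quad \qbin{n}{\alpha}{q} = \sum_{\alpha \text{ refines } \beta} r_\beta(q),\quad \qbin{n}{\alpha}{q,t} = \sum_{\alpha \text{ refines } \beta} r_\beta(q,t),$$
the first two from \eqref{multinomial-interps} and the third from Theorem~\ref{multinomial-as-w(q,t)-sum}. The refinement order on compositions of $n$ is isomorphic via partial sums to the Boolean lattice $B_{n-1}$, whose Möbius function is $(-1)^{\ell(\alpha)-\ell(\beta)}$, so Möbius inversion produces the three inclusion-exclusion identities stated in the proposition.

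\textbf{Step 2 (Leibniz expansion).} For the most general $(q,t)$-determinant, the Leibniz formula gives
$$n!_{q,t}\det\left(\varphi^{\sigma_{i-1}}\tfrac{1}{(\sigma_j-\sigma_{i-1})!_{q,t}}\right)_{i,j=1}^{\ell} = n!_{q,t}\sum_{\pi\in \Symm_\ell}\sgn(\pi)\prod_{i=1}^\ell \varphi^{\sigma_{i-1}}\tfrac{1}{(\sigma_{\pi(i)}-\sigma_{i-1})!_{q,t}},$$
with the analogous expansions in the ordinary and $q$-cases. Under the convention $1/m!_{q,t}:=0$ for $m<0$, only $\pi$ with $\pi(i)\ge i-1$ for every $i$ survive, and the rows with $\pi(i)=i-1$ contribute the factor $1/0!_{q,t}=1$. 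The key combinatorial lemma is that such permutations are in bijection with subsets $T=\{0=t_0<t_1<\cdots<t_{m-1}<t_m=\ell\}$, via the \emph{block cyclic shift} $\pi_T$ defined by $\pi_T(t_{k-1}+1)=t_k$ and $\pi_T(i)=i-1$ for all other $i$. This is shown by induction on $\ell$: the constraint $\pi^{-1}(j)\le j+1$ together with bijectivity forces the first block to be $\{1,\ldots,\pi(1)\}$ on which $\pi$ acts as the full cyclic shift, after which one restricts to $\{\pi(1)+1,\ldots,\ell\}$ and invokes the inductive hypothesis.

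\textbf{Step 3 (matching the terms).} Each $T$ determines the coarsening $\beta$ of $\alpha$ whose partial sums are $\sigma_{t_0},\sigma_{t_1},\ldots,\sigma_{t_m}$, with $\ell(\beta)=m$. The cycle on block $[t_{k-1}+1,t_k]$ has length $t_k-t_{k-1}$ and sign $(-1)^{t_k-t_{k-1}-1}$, so $\sgn(\pi_T)=(-1)^{\ell-m}=(-1)^{\ell(\alpha)-\ell(\beta)}$. The nontrivial factors in the product occur only at the block-start rows $i=t_{k-1}+1$, producing $\varphi^{\sigma_{t_{k-1}}}/\beta_k!_{q,t}$; since $\sigma_{t_{k-1}}$ is exactly the $(k{-}1)$-st partial sum of $\beta$, these telescope into $1/\beta!_{q,t}$. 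Multiplying by $n!_{q,t}$ yields $\qbin{n}{\beta}{q,t}$, so the contribution of $\pi_T$ to the determinant is precisely $(-1)^{\ell(\alpha)-\ell(\beta)}\qbin{n}{\beta}{q,t}$, reproducing the inclusion-exclusion sum from Step~1. The classical and $q$-cases are the identical argument with $\varphi$ and the subscript $t$ removed.

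\textbf{The main obstacle} is the combinatorial lemma in Step~2 classifying permutations with $\pi(i)\ge i-1$ as block cyclic shifts; once this is established, the Frobenius twists in the $(q,t)$-case fall into place automatically because the weight $\varphi^{\sigma_{t_{k-1}}}$ is exactly what is needed to build $\beta!_{q,t}$ from its pieces.
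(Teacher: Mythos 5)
Your proposal is correct, and it follows essentially the same approach the paper points to via its citation to Stanley, \textit{Enumerative Combinatorics I}, Examples 2.2.4--2.2.5: establish the inclusion--exclusion identities by M\"obius inversion on the Boolean lattice of coarsenings (using \eqref{multinomial-interps} for the first two cases and Theorem~\ref{multinomial-as-w(q,t)-sum} for the $(q,t)$-case), then expand the determinant by Leibniz, kill the terms with a negative factorial, and classify the surviving permutations as block cyclic shifts so that each contributes exactly one term of the inclusion--exclusion sum. Your verification that the Frobenius twists $\varphi^{\sigma_{t_{k-1}}}$ assemble the pieces $\beta_k!_{q,t}$ into $\beta!_{q,t}$ (because $\varphi$ is a ring automorphism and $\sigma_{t_{k-1}}$ is the $(k{-}1)$-st partial sum of $\beta$) is the one genuinely new ingredient needed to extend Stanley's argument to the $(q,t)$-case, and you handle it correctly; the sign computation $\sgn(\pi_T)=(-1)^{\ell(\alpha)-\ell(\beta)}$ and the classification of permutations with $\pi(i)\ge i-1$ are also sound.
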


By the definition \eqref{ribbon-number-definitions}, it
is clear that $r_\alpha$ is nonnegative, that $r_\alpha(q)$ is a polynomial in
$q$ with nonnegative coefficients, and that for integers $q \geq 2$ one will
have $r_\alpha(q,t)$ a polynomial in $t$ with nonnegative
coefficients.  It should also be clear that
$$
\begin{aligned}
\lim_{q \rightarrow 1} r_\alpha(q) & =r_\alpha \\
\lim_{t \rightarrow 1} r_\alpha(q,t) & = r_\alpha(q) \\
\lim_{q \rightarrow 1} r_\alpha(q,t^{\frac{1}{q-1}}) &= r_\alpha(t).
\end{aligned}
$$
Our goal in the next section will be to interpret these three ribbon numbers homologically.

\section{Homological interpretation of ribbon numbers}
\label{building-section}

The ribbon number $r_\alpha$ has a well-known interpretation 
as the rank of the only non-vanishing homology group in the $\alpha$-rank-selected subcomplex 
$\Delta(W,S)_\alpha$ of the {\it Coxeter complex} $\Delta(W,S)$ for $W=\Symm_n$.
For prime powers $q$, a result of Bj\"orner \cite[Theorem 4.1]{Bjorner} analogously shows that
$r_\alpha(q)$ is the rank of the homology in the $\alpha$-rank-selected subcomplex 
of the {\it Tits building} $\Delta(G,B)$ for $G=GL_n(\FF_q)$.

Here we use Bj\"orner's results to give, in parallel, Hilbert series interpretations
for $r_\alpha(t), r_\alpha(q,t)$.  These interpretations will be
related to graded modules of $\Hom$ spaces between the homology representations on $\Delta(W,S)_\alpha$
or $\Delta(G,B)_\alpha$ and appropriate polynomial rings.
This generalizes work of Kuhn and Mitchell \cite{KuhnMitchell}, 
who dealt with the case where $\alpha=(1,1,\ldots,1)=:1^n$,
in order to determine the (graded) composition multiplicities of the {\it Steinberg character}
of $G$ within the polynomial ring $\FF_q[\xx]$.

\begin{definition}
Let $W:=\Symm_n$ and $G:=GL_n(\FF_q)$.
Given a composition $\alpha$ of $n$, define the virtual sum of induced $\ZZ W$-modules
\begin{equation}
\label{W-virtual-modules}
\chi^\alpha := \sum_{ \beta \text{ refined by }\alpha } (-1)^{\ell(\alpha)-\ell(\beta)} 1_{W_\beta}^{W}
\end{equation}
and $\FF_q G$-modules
\begin{equation}
\label{G-virtual-modules}
\chi^\alpha_q := \sum_{ \beta \text{ refined by }\alpha } (-1)^{\ell(\alpha)-\ell(\beta)} 1_{P_\beta}^{G}.
\end{equation}
\end{definition}
These virtual modules have been considered by Bj\"orner, Bromwich, Curtis,
Mathas, Smith, Solomon, Surowski, and others; see \cite{Bjorner}, \cite{Mathas} and
\cite{Smith} for some of the relevant references.  
In the special case where $\alpha=1^n$ is a single column with 
$n$ cells, $\chi^\alpha$ is the {\it sign} representation of $W$, and
$\chi^\alpha_q$ is the {\it Steinberg representation} of $G$.

For any composition $\alpha$ of $n$, these virtual modules $\chi^\alpha$ and
$\chi^\alpha_q$ turn out to be {\it genuine} $\ZZ W$ and $\ZZ G$-modules.  
They can be defined over the integers because they are the 
representations on the {\it top} homology of the (shellable) simplicial complexes
$\Delta(W,S)_\alpha$ and $\Delta(G,B)_\alpha$,
which are the {\it rank-selection} (or {\it type-selection}) of 
the Tits building $\Delta(G,B)$ to the rank set given by the partial sums 
$\{ \sigma_s \}_{s=1,\ldots,\ell-1}$; see \cite[\S 4]{Bjorner}.  Note that top-dimensional
homology groups are always free as $\ZZ$-modules because they are the group of top-dimensional 
{\it cycles}; there are no boundaries to mod out.

In what follows, we will make several arguments about why certain algebraic complexes
$$
\cdots \rightarrow C_{i+1} \overset{d_{i+1}}{\rightarrow} C_i \overset{d_i}{\rightarrow} C_{i-1} \rightarrow \cdots
$$
are not only acyclic, but actually{\it chain-contractible}, that is, 
there exist maps backward $C_{i+1} \overset{D_i}{\leftarrow} C_i$
for each $i$ with the property that
$$
D_{i-1} d_i + d_{i+1} D_i = 1_{C_i}.
$$
We will use repeatedly the following key fact.
\begin{proposition}
\label{additive-functor-prop}
If one applies an additive functor to a chain-contractible complex, the
result remains chain-contractible.
\end{proposition}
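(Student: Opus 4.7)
The plan is to prove this directly from the defining properties of additive functors, by transporting the chain contraction through the functor $F$.

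First I would fix notation: suppose $(C_\bullet, d_\bullet)$ is a chain-contractible complex with contraction $D_\bullet\colon C_i \to C_{i+1}$ satisfying
\begin{equation*}
D_{i-1} d_i + d_{i+1} D_i = 1_{C_i} \text{ for all } i,
\end{equation*}
and let $F$ be an additive functor. I would then apply $F$ to every object $C_i$ and every morphism $d_i$ and $D_i$, producing the candidate complex $(F(C_\bullet), F(d_\bullet))$ with candidate contraction $F(D_\bullet)$.

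Next I would verify that $F(C_\bullet)$ is indeed a chain complex: this is immediate from $d_i d_{i+1} = 0$ by applying $F$ and using that $F$ preserves composition and the zero morphism (the latter following from additivity applied to $0 + 0 = 0$). The main step is then to apply $F$ to both sides of the contraction identity above. Using that $F$ preserves identity morphisms, preserves composition, and is additive on Hom-sets, one obtains
\begin{equation*}
F(D_{i-1})\, F(d_i) + F(d_{i+1})\, F(D_i) = 1_{F(C_i)},
\end{equation*}
which exhibits $F(D_\bullet)$ as a chain contraction of $F(C_\bullet)$.

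There is no genuine obstacle here; the only thing to be careful about is the interpretation when $F$ is contravariant (as will happen with $\Hom$-functors in one slot). In that case the arrows reverse, the roles of $d$ and $D$ swap degrees accordingly, and one re-indexes so that $F(D_i)\colon F(C_{i+1}) \to F(C_i)$ serves as the contraction of the reversed complex; the same computation then goes through verbatim. Since the subsequent applications in the paper will apply $F = \Hom_{\mathbb{Z}W}(-, \mathbb{Z}[\xx])$ or $F = \Hom_{\mathbb{F}_q G}(-, \mathbb{F}_q[\xx])$ to resolutions built from the building and the Coxeter complex, this functorial preservation of contractibility is exactly the mechanism needed to transfer vanishing/acyclicity statements between the geometric side and the algebraic side.
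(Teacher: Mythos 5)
Your proof is correct and takes essentially the same approach as the paper: apply $F$ to the identity $D_{i-1}d_i + d_{i+1}D_i = 1_{C_i}$ and use additivity and functoriality to conclude $F(D_{i-1})F(d_i) + F(d_{i+1})F(D_i) = 1_{F(C_i)}$, with the contravariant case handled by reversing arrows. The paper's proof is the same computation, stated more briefly.
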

\begin{proof}
If the complex is called $(\mathcal C,d_*)$ and the functor called $F$, then
the maps $F(D_i)$ provide a chain-contraction for $(F({\mathcal C}), F(d_*))$:
additivity and functoriality imply
$$
F(D_{i-1}) F(d_i) + F(d_{i+1}) F(D_i) = 1_{F(C_i)}
$$
if $F$ is covariant, and a similar statement if $F$ is contravariant..
\end{proof}

The following key fact was proven by Kuhn and Mitchell for $\alpha=1^n$;  we
simply repeat their proof for general $\alpha$.

\begin{theorem}
\label{KM}
Given a composition $\alpha$ of $n$, the simplicial chain complex for the type-selection $\Delta(W,S)_\alpha$ or
$\Delta(G,B)_\alpha$ gives rise to chain-contractible complexes of $\ZZ W$ or $\FF_q G$-modules
\begin{equation}
\label{chain-contractible-complexes}
\begin{aligned}
&0 \rightarrow \chi^{\alpha} \rightarrow {\mathcal {C}} \text{ or }\\
&0 \rightarrow \chi^{\alpha}_q \rightarrow {\mathcal {C}}\\
\end{aligned}
\end{equation}
where the typical term in ${\mathcal C}$ takes the form
$$
\begin{aligned}
&\bigoplus_{\substack{\beta \text{ refined by }\alpha:\\ \ell(\beta)=k}} 1_{W_\beta}^{W} \text{ or }\\
&\bigoplus_{\substack{\beta \text{ refined by }\alpha:\\ \ell(\beta)=k}} 1_{P_\beta}^{G}.
\end{aligned}
$$
\end{theorem}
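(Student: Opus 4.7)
The strategy is to recognize $\mathcal{C}$ as (essentially) the augmented simplicial chain complex of Björner's rank-selected subcomplex $\Delta(W,S)_\alpha$ or $\Delta(G,B)_\alpha$, and then extract all three assertions (module structure, identification of $\chi^{\alpha}/\chi^{\alpha}_q$ at the top, and chain-contractibility) from shellability combined with the induced-module description of each chain module.

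First I would identify the chain modules. A $k$-simplex in $\Delta(W,S)_\alpha$ is a strict flag of proper nonempty subsets of $\{1,\ldots,n\}$ whose cardinalities are the partial sums of some composition $\beta$ of $n$ refined by $\alpha$ with $\ell(\beta)=k+2$; the group $W$ permutes the $k$-simplices of type $\beta$ transitively, with stabilizer the Young parabolic $W_\beta$, so the $k$-th simplicial chain module is exactly $\bigoplus_{\beta} 1_{W_\beta}^{W}$. The identical analysis works for $\Delta(G,B)_\alpha$: $G$ acts transitively on flags of $\FF_q$-subspaces of each type $\beta$, with stabilizer $P_\beta$. Appending the augmentation $1_W^W$ (resp. $1_G^G$) at the bottom and the top homology at the top produces the complex $0\to\chi^\alpha\to\mathcal{C}$ (resp. $0\to\chi^\alpha_q\to\mathcal{C}$) of the desired shape.

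Next I would invoke Björner's Theorem 4.1 in \cite{Bjorner}, which says that both $\Delta(W,S)_\alpha$ and $\Delta(G,B)_\alpha$ are shellable. Shellability implies Cohen--Macaulayness over any coefficient ring, so the reduced simplicial chain complex is acyclic except in top dimension, and the top homology is a free $\ZZ$-module (resp.\ $\FF_q$-vector space) since it consists of top-dimensional cycles with no boundaries. Taking Euler characteristics --- or equivalently, forming the alternating sum of virtual modules term-by-term --- reproduces exactly the inclusion-exclusion formulas \eqref{W-virtual-modules} and \eqref{G-virtual-modules}, so the top homology module is identified, as a genuine $\ZZ W$-module (resp. $\FF_q G$-module), with $\chi^\alpha$ (resp. $\chi^\alpha_q$).

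The main obstacle is the last step, promoting acyclicity to chain-contractibility over the non-semisimple rings $\ZZ W$ and $\FF_q G$. Here I would use the projectivity of Steinberg-type modules: the classical modular Steinberg $\chi^{1^n}_q$ is projective over $\FF_q G$ (Curtis--Steinberg), and $\chi^\alpha_q$, which is built by Harish-Chandra induction from Steinberg modules of Levi factors of $P_\alpha$, inherits the projectivity needed to split off $\chi^\alpha_q$ at the top of the acyclic augmented complex. Iterating this splitting --- adapting Kuhn--Mitchell's construction \cite{KuhnMitchell} from $\alpha=1^n$ to general $\alpha$ by replacing the Borel with $P_\alpha$ and Steinberg with $\chi^\alpha_q$ --- yields an explicit equivariant chain contraction $D_\bullet$. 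The $\ZZ W$-analogue proceeds in parallel, using that a shelling realizes $\chi^\alpha$ as a $\ZZ$-direct summand of the top chain module whose complement is again a sum of permutation modules, allowing the inductive splitting to be carried out integrally.
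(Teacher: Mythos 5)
Your first two paragraphs are sound and match the paper's setup: the identification of the chain modules with $\bigoplus_\beta 1_{W_\beta}^W$ or $\bigoplus_\beta 1_{P_\beta}^G$ is correct, and shellability (Bj\"orner) gives the acyclicity that lets you place $\chi^\alpha$ or $\chi^\alpha_q$ at the top. The problem is the final step, where you promote acyclicity to chain-contractibility by appealing to projectivity of $\chi^\alpha_q$.

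That step contains two real errors. First, $\chi^\alpha_q$ is \emph{not} projective over $\FF_q G$ for general $\alpha$. Take $\alpha=(1,2)$ for $G=GL_3(\FF_2)$: then $\chi^\alpha_q$ is the augmentation kernel inside $\FF_2[\mathbb{P}^2(\FF_2)]$, of dimension $q+q^2 = 6$, while a Sylow $2$-subgroup of $GL_3(\FF_2)$ has order $q^3=8$; since the dimension of a projective $\FF_q G$-module must be divisible by the order of a Sylow $p$-subgroup, $\chi^{(1,2)}_q$ cannot be projective. Second, the description of $\chi^\alpha_q$ as the Harish--Chandra induction of the Steinberg module of the Levi $L_\alpha$ is incorrect: for $\alpha=1^n$ the Levi is the torus $T$, whose Steinberg module is trivial, so Harish--Chandra induction would give $1_B^G$, not the Steinberg module $\chi^{1^n}_q$. (Harish--Chandra induction of $\mathrm{St}_{L_\alpha}$ expands as an alternating sum of $1_{P_\gamma}^G$ over $\gamma$ \emph{refining} $\alpha$, whereas $\chi^\alpha_q$ is the alternating sum over $\beta$ \emph{refined by} $\alpha$.) Beyond these two mistakes, even a projective top homology would not by itself contract a complex of length $>2$ whose interior terms $1_{P_\beta}^G$ are non-projective; one would need all the intermediate syzygies to split as well.

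The paper's proof takes a different, more elementary route that you should compare with. It uses the observation (Bj\"orner, Kuhn--Mitchell, Smith) that the shelling order on the facets $bwP_\alpha$ can be chosen $B$-equivariantly, since $B$ fixes the minimal coset representative $w$; this yields chain-contraction maps that are $\ZZ B$-module homomorphisms. Because $[G:B]$ is coprime to $p$, one then passes to $\ZZ_{(p)}$-coefficients and averages these maps over the cosets $G/B$ to obtain a $\ZZ_{(p)}G$-equivariant chain contraction, and finally tensors down to $\FF_q$. This averaging argument is exactly what replaces the (false) projectivity claim in your write-up, and it is uniform in $\alpha$. Your instinct to ``adapt Kuhn--Mitchell from $\alpha=1^n$ to general $\alpha$'' is the right one, but the adaptation goes through the $B$-equivariant shelling and the transfer/averaging map, not through projectivity of $\chi^\alpha_q$.
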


\begin{proof}
We give the proof for the case of the Tits building
$\Delta(G,B)$; the ``$q=1$ case'' for $\Delta(W,S)$ is even easier.

First note that $\chi^\alpha_q$ includes in the first (top) chain group as the kernel
of the top boundary map, setting up the complex of $\ZZ G$-modules in 
\eqref{chain-contractible-complexes}.  It remains to prove that it is
chain-contractible after tensoring with $\FF_q$.

Bj\"orner \cite{Bjorner}, Kuhn and Mitchell \cite{KuhnMitchell}, and Smith \cite{Smith}
have observed that the shelling order which one uses
for the Tits building (or any of its rank-selections) can actually be chosen $B$-equivariant:
one can shell the facets $bwP_\alpha$ in any order that respects the 
ordering by length of the minimal
coset representative $w \in W/W_\alpha$, and the $B$-action never alters this representative
$w$.  This means that the resulting chain-contraction maps 
can be chosen as $\ZZ B$-module maps.

Since $[G:B]$ is coprime to the prime $p$ (= the characteristic of $\FF_q$),
if one tensors the coefficients with the localization $\mathbb{Z}_{(p)}$
at the prime $p$ (i.e. inverting all elements of $\mathbb{Z}$ coprime to $p$), one can start with
these $\mathbb{Z}_{(p)} B$-module maps, and average them over the cosets $G/B$ to obtain
$\mathbb{Z}_{(p)}G$-module maps that still give a chain-contraction.  

Lastly, one can tensor the coefficients
with $\mathbb{F}_q$ and obtain the desired $\mathbb{F}_q G$-module chain-contraction.
\end{proof}

Given an $\FF_q G$-module $\psi$, one can
regard the $\FF_q$-vector space $\Hom_{\FF_q G}(\psi,\FF_q[\xx])$ as an 
$\FF_q[\xx]^{G}$-module:  
given $f$ in $\FF_q[\xx]^G$, and a $G$-equivariant map $h: \psi \rightarrow \FF_q[\xx]$,
the map $fh$ that sends $u \in \psi$ to $f \cdot h(u)$ is also $G$-equivariant.

We come to the main result of this section, whose assertion for $\Symm_n$-representations
is known in characteristic zero;  see the extended Remark~\ref{cell-repn-remark} below.

\begin{theorem}
\label{hilbdet}
Given a composition $\alpha$ of $n$,  
the $\ZZ[\xx]^W$-module 
$$
M:= \Hom_{\ZZ W}(\chi^\alpha, \ZZ[\xx])
$$
is free over $\ZZ[\xx]^W$, with
$$
\Hilb(M/\ZZ[\xx]^W_+ M,t) = r_\alpha(t).
$$
Analogously, for $q$ a prime power, the $\FF_q[\xx]^G$-module
$$
M:= \Hom_{\FF_q G}(\chi^\alpha_q, \FF_q[\xx])
$$
is free over $\FF_q[\xx]^G$, with
$$
\Hilb(M/\FF_q[\xx]^G_+ M,t) = r_\alpha(q,t).
$$
\end{theorem}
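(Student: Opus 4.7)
The plan is to apply the contravariant functor $\Hom_{\FF_q G}(-,\FF_q[\xx])$ (respectively $\Hom_{\ZZ W}(-,\ZZ[\xx])$) to the chain-contractible complex of Theorem~\ref{KM}, and then read off both the freeness of $M$ and its Hilbert series from the resulting split exact sequence. I describe the $G$-case; the $W$-case is formally identical.

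Proposition~\ref{additive-functor-prop} together with Theorem~\ref{KM} yields a chain-contractible (hence split exact) complex of graded $\FF_q[\xx]^G$-modules
$$\cdots \to \Hom_{\FF_q G}(\mathcal{C}_1,\FF_q[\xx]) \to \Hom_{\FF_q G}(\mathcal{C}_0,\FF_q[\xx]) \to M \to 0.$$
By Frobenius reciprocity (tensor-Hom adjunction applied to $1_{P_\beta}^G = \FF_q G \otimes_{\FF_q P_\beta} \FF_q$),
$$\Hom_{\FF_q G}(1_{P_\beta}^G,\FF_q[\xx]) \;\cong\; \Hom_{\FF_q P_\beta}(\FF_q,\FF_q[\xx]) \;=\; \FF_q[\xx]^{P_\beta},$$
so each term in the complex is a finite direct sum of $\FF_q[\xx]^{P_\beta}$ for $\beta$ refining $\alpha$. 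By Corollary~\ref{free}, each $\FF_q[\xx]^{P_\beta}$ is a finitely generated free $\FF_q[\xx]^G$-module, and the splitting of the complex therefore exhibits $M$ as a direct summand of a finitely generated graded free $\FF_q[\xx]^G$-module. A graded Nakayama argument over the positively graded polynomial algebra $\FF_q[\xx]^G$ upgrades this projectivity to freeness: one lifts a graded basis of $M/\FF_q[\xx]^G_+ M$ to a surjection from a matching graded free module $F \twoheadrightarrow M$, and applies graded Nakayama to the kernel (which satisfies $K = \FF_q[\xx]^G_+ K$ since $F \twoheadrightarrow M$ is an isomorphism modulo $\FF_q[\xx]^G_+$).

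For the Hilbert series, the split exactness gives an Euler-characteristic identity
$$\Hilb(M,t) = \sum_{\beta \text{ refined by }\alpha} (-1)^{\ell(\alpha)-\ell(\beta)} \Hilb(\FF_q[\xx]^{P_\beta},t) = r_\alpha(q,t)\cdot \Hilb(\FF_q[\xx]^G,t),$$
where the second equality invokes Corollary~\ref{free} and Proposition~\ref{Macmahon-determinant}. Freeness of $M$ over $\FF_q[\xx]^G$ gives $\Hilb(M,t) = \Hilb(M/\FF_q[\xx]^G_+ M,t)\cdot \Hilb(\FF_q[\xx]^G,t)$, and comparison yields $\Hilb(M/\FF_q[\xx]^G_+ M,t) = r_\alpha(q,t)$. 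The main obstacle I anticipate is the projective-implies-free step in the $W$-case over $\ZZ$: here one must first observe that $M/\ZZ[\xx]^W_+ M$ is $\ZZ$-free (as a $\ZZ$-module summand of $F/\ZZ[\xx]^W_+ F$, which is $\ZZ$-free for any graded free $F$ containing $M$ as a summand) before the graded Nakayama lifting argument produces an honest basis rather than just a $\ZZ$-generating set; once this is in place the rest of the argument is unchanged.
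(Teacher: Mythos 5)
Your proposal is correct and follows essentially the same architecture as the paper's proof: apply $\Hom_{\FF_q G}(-,\FF_q[\xx])$ to the chain-contractible complex of Theorem~\ref{KM}, identify the terms as the free $\FF_q[\xx]^G$-modules $\FF_q[\xx]^{P_\beta}$ via Frobenius reciprocity and Corollary~\ref{free}, and read off the Hilbert series from the Euler characteristic. The one place you genuinely diverge is the freeness argument: the paper tensors the resulting free resolution over $\FF_q[\xx]^G$ with the residue $\FF_q$, invokes Proposition~\ref{additive-functor-prop} once more to see the tensored complex is still chain-contractible, and concludes $\Tor^{\FF_q[\xx]^G}_i(M,\FF_q)=0$ for $i>0$, hence $M$ is free; you instead observe that the contraction splits the surjection onto $M$, so $M$ is a direct summand of a graded free module and hence graded projective, and then run graded Nakayama. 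Both are clean, and your version has the modest advantage of making visible the extra step needed in the $W$-case over $\ZZ$ (namely, that $M/\ZZ[\xx]^W_+M$ is $\ZZ$-free, so that lifting a basis really gives a free cover), a point the paper disposes of with ``analogous and easier.'' Either route works; yours is slightly more explicit about why the argument survives the passage from a field to $\ZZ$.
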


\begin{proof}
As with the previous theorem, we give the proof only for the assertions about $G$;  the proof
for the assertions about $W$ are analogous and easier.

Start with the chain-contractible $\FF_q GL_n$-complex from Theorem 
\ref{KM}.   Applying the functor $\Hom_{\FF_q GL_n}(-,\FF_q[\xx])$ to this, one obtains (via Proposition~\ref{additive-functor-prop}) a 
chain-contractible complex of $\FF_q[\xx]^G$-modules
that looks like
$$
{\mathcal C}' \rightarrow M \rightarrow 0
$$
and where the typical term in ${\mathcal C}'$ is a direct sum of terms of the form
$$
\Hom_{\FF_q G}(1_{P_\beta}^G,\FF_q[\xx]) \cong \FF_q[\xx]^{P_\beta}.
$$
Since every ring $\FF_q[\xx]^{P_\beta}$ is a free $\FF_q[\xx]^G$-module by Corollary \ref{free},
this is actually a free $\FF_q[\xx]^G$-resolution of
$M$.  Thus it can be used to compute $\Tor^{\FF_q[\xx]^G}(M,\FF_q)$:  tensoring ${\mathcal C}'$
over $\FF_q[\xx]^G$ with $\FF_q$ gives (via Proposition~\ref{additive-functor-prop}) a 
chain-contractible complex ${\mathcal C}''$ of $\FF_q$-vector spaces, whose homology
computes this $\Tor$.  But since the complex  
${\mathcal C}''$ is chain-contractible, $\Tor^{\FF_q[\xx]^G}_i(M,\FF_q)$ vanishes
for $i > 0$, that is, $M$ is a free $\FF_q[\xx]^G$-module, giving the first assertion of the theorem.

For the second assertion, note that the resolution  ${\mathcal C}'$ of the $\FF_q[\xx]^G$-module $M$ shows
$$
\Hilb(M,t) = \sum_{\beta \text{ refined by } \alpha} (-1)^{\ell(\alpha)-\ell(\beta)} \Hilb(\FF_q[\xx]^{P_\beta},t).
$$
Since $M$ and every one of the $\FF_q[\xx]^{P_\beta}$'s are all free as $\FF_q[\xx]^G$-modules
by Corollary \ref{free}, one can divide both sides by $\Hilb(\FF_q[\xx]^G,t)$ to obtain
$$
\begin{aligned}
\Hilb(M/\FF_q[\xx]^G_+ M,t)&= \sum_{\beta \text{ refined by } \alpha} 
                                     (-1)^{\ell(\alpha)-\ell(\beta)} 
                          \frac{\Hilb(\FF_q[\xx]^{P_\beta},t)}{\Hilb(\FF_q[\xx]^G,t)}\\
                      &= \sum_{\beta \text{ refined by } \alpha} (-1)^{\ell(\alpha)-\ell(\beta)} 
                            \qbin{n}{\beta}{q,t}\\
                      & = r_\alpha(q,t). \qed 
\end{aligned}
$$
\end{proof}

\begin{remark}
\label{cell-repn-remark}
We sketch here how the assertion in Theorem~\ref{hilbdet} for $W=\Symm_n$ follows from
known results in the literature, when one considers $\CC W$-modules rather than $\ZZ W$-modules;
see Roichman \cite{Roichman} for generalizations and more recent viewpoints on some of these results.

It is known from work of Hochster and Eagon (see \cite[Theorem 3.10]{Stanley-invariants})
that for {\it any} $\CC W$-module $\chi$, the $\Hom$-space
$
M^\chi:=\Hom_{\CC W}(\chi, \CC[\xx])
$
is always free as a $\CC[\xx]^W$-module.  One can compute its Hilbert series via
a {\it Molien series} calculation \cite[Theorem 2.1]{Stanley-invariants} as 
\begin{equation}
\label{Molien-calculation}
\begin{aligned}
\Hilb( M^\chi,t) &= \frac{1}{n!} \sum_{w \in W=\Symm_n} \frac{ \chi(w) }{\det(1 - t w)} \\
&= \frac{1}{n!} \sum_{w \in \Symm_n} \chi(w) \cdot p_{\lambda(w)}(1,t,t^2,\ldots) \\
&= s_\chi(1,t,t^2,\ldots).
\end{aligned}
\end{equation}
Here $\lambda(w)$ denotes the partition of $n$ that gives $w$'s cycle type,
$p_\lambda(x_1,x_2,\ldots)$ is the {\it power sum} symmetric function
corresponding to $\lambda$, and $s_\chi(x_1,x_2,\ldots)$ is the symmetric function
which is the image of the character $\chi$ under the 
{\it Frobenius characteristic map} from
$\Symm_n$-characters to symmetric functions.  
Thus one has
$$
\begin{aligned}
\Hilb( M^\chi/\CC[\xx]^W_+ M^\chi,t) 
 &=  \frac{\Hilb( M^\chi,t) }{\Hilb( \CC[\xx]^W,t) } \\
 &= (t;t)_n \,\,  s_\chi(1,t,t^2,\ldots).
\end{aligned}
$$
When $\chi$ is a {\it skew-character} $\chi^{\lambda/\mu}$ of $\Symm_n$, 
then $s_\chi=s_{\lambda/\mu}$ is a skew Schur function, and
one has the explicit formula \cite[Proposition 7.19.11]{Stanley-EC2}
\begin{equation}
\label{skew-fake-degree}
(t;t)_n \,\, s_{\lambda/\mu}(1,t,t^2,\ldots) 
 = f^{\lambda/\mu}(t) := \sum_{Q} t^{\maj(Q)}
\end{equation}
where $Q$ runs over standard Young tableaux of shape $\lambda/\mu$, and
$\maj(Q)$ is the sum of the entries $i$ in the {\it descent set} defined by
$$
\Des(Q):=\{i \in \{1,2,\ldots,n-1\}: i+1 \text{ appears in a lower row of }Q\text{ than }i\}.
$$
Given a composition $\alpha=(\alpha_1,\ldots,\alpha_\ell)$, the 
$\CC W$-module $\chi^\alpha$ on the top homology of
the subcomplex $\Delta(W,S)_\alpha$ turns out to be the skew-character $\chi^{\lambda/\mu}$
for the ribbon skew shape $\lambda/\mu$ having $\alpha_i$ cells in its $i^{th}$ lowest row: 
Solomon \cite{Solomon} used the Hopf trace formula to express the homology representation
$\chi^\alpha$ as the virtual sum in \eqref{W-virtual-modules}, and this can then be re-intepreted
as the {\it Jacobi-Trudi} formula for the skew-character of this ribbon skew shape.  
Consequently, if $M:=M^{\chi^\alpha}$ then
$$
\Hilb(M/\CC[\xx]^W_+ M,t)
 = \sum_{Q: \lambda(Q)=\alpha} t^{\maj(Q)} 
 = \sum_{\substack{w \in \Symm_n : \\ \beta(w)=\alpha}} t^{\maj(w)} 
  = \sum_{\substack{w \in \Symm_n :\\ \beta(w)=\alpha}} t^{\ell(w)}
  = r_\alpha(t).
$$
Here the second equality uses a well-known bijection that reads a standard tableaux $Q$
filling the ribbon shape and associates to it a permutation $w$ in $W$ having
descent composition $\beta(w)=\alpha$, while
the third equality is a well-known result of MacMahon (see e.g., \cite{FoataSchutzenberger}).
\end{remark}

\section{The coincidence in the case of hooks}
\label{hook-section}

As mentioned in the Introduction,
there is an important coincidence that occurs in the special case of
the principal specialization of $\MacSchurZ_\lambda$ when $\lambda$ is
a {\it hook shape} $(m,1^k)$, leading to a relation with the $(q,t)$-ribbon
number for the reverse hook composition $\alpha=(1^k,m)$.  

We begin with a simplification in the product formula for
the principal specialization when $\lambda$ is a hook.

\begin{proposition} 
\label{qthook}
For $n \geq k$,
$$
\begin{aligned}
\MacSchurZ_{(m,1^k)}(1,t,\cdots, t^n)
& = \qbin{n+m}{n-k}{q,t} \varphi^{n-k}
\prod_{i=1}^k \frac{t^{q^{m+k}} - t^{q^i}}{t^{q^i} - t} \\
&= \qbin{n+m}{n-k}{q,t} \varphi^{n-k} \MacSchurZ_{(m,1^k)}(1,t,\cdots, t^k).
\end{aligned}
$$
\end{proposition}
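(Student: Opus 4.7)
The plan is to evaluate $\MacSchurZ_{(m,1^k)}(1,t,\ldots,t^n)$ directly via the Vandermonde-type product formula \eqref{bialternant-product-formula} and then algebraically match it to the claimed right-hand side.  Pad the hook to a partition of length $n+1$ as $\lambda = (m, 1^k, 0^{n-k})$, and apply the formula with variables $(1, t, \ldots, t^n)$. Setting $a_j := \lambda_{n+1-j} + j$ for $0 \leq j \leq n$, a short case analysis on the three ``regions'' of $\lambda$ gives
\[
(a_0, a_1, \ldots, a_n) \;=\; (0, 1, \ldots, n-k-1,\; n-k+1, \ldots, n,\; m+n),
\]
which is already strictly increasing. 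As a set, $B := \{a_0, \ldots, a_n\} = (A \setminus \{n-k\}) \cup \{m+n\}$, where $A := \{0, 1, \ldots, n\}$. Letting $V_S := \prod_{s < s' \in S}(t^{q^{s'}} - t^{q^s})$, the formula then reads $\MacSchurZ_{(m,1^k)}(1,t,\ldots,t^n) = V_B / V_A$.

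Next I would cancel the factors in $V_B / V_A$ indexed by pairs lying inside $A \cap B = A \setminus \{n-k\}$. What survives in the numerator are only the factors involving the ``new'' element $m+n$, namely $\prod_{b \in A \setminus \{n-k\}}(t^{q^{m+n}} - t^{q^b})$; what survives in the denominator are the factors involving the ``deleted'' element $n-k$, namely $\prod_{a=0}^{n-k-1}(t^{q^{n-k}} - t^{q^a}) \cdot \prod_{a=n-k+1}^{n}(t^{q^a} - t^{q^{n-k}})$. Rewriting the $(q,t)$-binomial using \eqref{recalled-q-t-binomial-definition} as
\[
\qbin{n+m}{n-k}{q,t} \;=\; \prod_{b=0}^{n-k-1} \frac{t^{q^{m+n}} - t^{q^b}}{t^{q^{n-k}} - t^{q^b}},
\]
I would identify it as exactly the sub-ratio indexed by $b \in \{0, 1, \ldots, n-k-1\}$. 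Dividing it out leaves precisely $\prod_{b=n-k+1}^{n} \frac{t^{q^{m+n}} - t^{q^b}}{t^{q^b} - t^{q^{n-k}}}$, which after the substitution $b = i + (n-k)$ equals $\varphi^{n-k} \prod_{i=1}^k \frac{t^{q^{m+k}} - t^{q^i}}{t^{q^i} - t}$, establishing the first equality.

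The second equality is then a consequence of applying the first at $n = k$: there $\qbin{m+k}{0}{q,t} = 1$ and $\varphi^0$ is the identity, so one obtains $\MacSchurZ_{(m,1^k)}(1, t, \ldots, t^k) = \prod_{i=1}^k \frac{t^{q^{m+k}} - t^{q^i}}{t^{q^i} - t}$. Substituting this back into the first equality yields the second.

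The main obstacle I anticipate is purely bookkeeping: tracking the $V_B$ factors involving $m+n$ and the $V_A$ factors involving $n-k$ so that the surviving ratio decomposes cleanly as the $(q,t)$-binomial times the shifted hook product. Fortunately no spurious factor of the form $t^{q^{m+n}} - t^{q^{n-k}}$ ever appears, since $n-k \notin B$ and $m+n \notin A$, so the cancellation is direct once the three index ranges $\{0, \ldots, n-k-1\}$, $\{n-k+1, \ldots, n\}$, and the singleton $\{m+n\}$ are kept straight.
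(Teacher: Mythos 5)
Your proof is correct and takes essentially the same approach as the paper: the paper states that the first equality is "a straightforward consequence of \eqref{bialternant-product-formula}, deduced similarly to the proof of Theorem~\ref{qthomo} and equation~\eqref{qtelem}," which is exactly the direct cancellation of the Vandermonde product that you carry out. You have simply spelled out the bookkeeping (the shifted exponent set $B = (A\setminus\{n-k\})\cup\{m+n\}$ and the surviving factors) that the paper leaves implicit.
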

\begin{proof}
The second equation is a consequence of the first.  The first equation is a
straightforward consequence of \eqref{bialternant-product-formula}, 
deduced similarly to the proof 
of Theorem~\ref{qthomo} and equation~\eqref{qtelem}.
\end{proof}

\noindent
This implies the following relation, that we will use below for an induction.
\begin{corollary}
\label{hookrec}
$$
\begin{aligned}
& \MacSchurZ_{(m,1^k)}(1,t,t^2,\ldots,t^n)+ \MacSchurZ_{(m+1,1^{k-1})}(1,t,t^2,\ldots,t^{n-1})\\
&\qquad = \HZ_m(1,t,t^2,\ldots,t^n) \cdot \EZ_k(1,t,t^2,\ldots,t^{n-1})
\end{aligned}
$$
\end{corollary}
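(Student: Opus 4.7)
The plan is to use Proposition~\ref{qthook} to peel off a common factor $\qbin{n+m}{n-k}{q,t}\varphi^{n-k}$ from both sides of the claimed identity, reducing the corollary to the special case $n=k$, which is then a direct verification from the product formulas.

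For the left-hand side, Proposition~\ref{qthook} immediately gives
$$\MacSchurZ_{(m,1^k)}(1,t,\ldots,t^n) = \qbin{n+m}{n-k}{q,t} \cdot \varphi^{n-k}\MacSchurZ_{(m,1^k)}(1,t,\ldots,t^k),$$
and applying the same proposition with $(m,k,n)$ replaced by $(m+1,k-1,n-1)$ produces the identical prefactor $\qbin{n+m}{n-k}{q,t}\varphi^{n-k}$ for the second summand. On the right-hand side, Theorem~\ref{qthomo} identifies $\HZ_m(1,t,\ldots,t^n)=\qbin{n+m}{n}{q,t}$, while Proposition~\ref{qthook} applied to $\EZ_k=\MacSchurZ_{(1^k)}$ (i.e.\ with $m=1$ and $k$ replaced by $k-1$) gives $\EZ_k(1,t,\ldots,t^{n-1})=\qbin{n}{n-k}{q,t}\varphi^{n-k}\EZ_k(1,t,\ldots,t^{k-1})$. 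A direct factorial cancellation, equivalent to an application of~\eqref{multinomial-in-terms-of-binomials}, shows
$$\qbin{n+m}{n}{q,t}\cdot\qbin{n}{n-k}{q,t} = \qbin{n+m}{n-k,k,m}{q,t} = \qbin{n+m}{n-k}{q,t}\cdot\varphi^{n-k}\qbin{k+m}{k}{q,t},$$
and since $\qbin{k+m}{k}{q,t}=\HZ_m(1,t,\ldots,t^k)$ by Theorem~\ref{qthomo}, the right-hand side factors as $\qbin{n+m}{n-k}{q,t}\cdot\varphi^{n-k}\bigl[\HZ_m(1,t,\ldots,t^k)\cdot\EZ_k(1,t,\ldots,t^{k-1})\bigr]$.

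Canceling the common prefactor reduces the corollary to the $n=k$ case
$$\MacSchurZ_{(m,1^k)}(1,t,\ldots,t^k) + \MacSchurZ_{(m+1,1^{k-1})}(1,t,\ldots,t^{k-1}) = \HZ_m(1,t,\ldots,t^k)\cdot\EZ_k(1,t,\ldots,t^{k-1}).$$
I will verify this directly: Proposition~\ref{qthook} with the top index equal to the number of variables (so $\varphi^0$ and binomial prefactor $1$) expresses each left-hand summand as a product; both share the subproduct $\prod_{i=1}^{k-1}(t^{q^{m+k}}-t^{q^i})/(t^{q^i}-t)$, and the complementary factors sum to $(t^{q^{m+k}}-t^{q^k})/(t^{q^k}-t)+1=(t^{q^{m+k}}-t)/(t^{q^k}-t)$, collapsing the left side to $\prod_{i=1}^k(t^{q^{m+k}}-t^{q^{i-1}})/(t^{q^i}-t)$. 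The right-hand side, computed from the product formula for $\HZ_m=\qbin{k+m}{k}{q,t}$ and from~\eqref{qtelem} for $\EZ_k$, yields the same product after the factors $t^{q^k}-t^{q^{i-1}}$ telescope out.

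The only real obstacle is bookkeeping: ensuring that $\varphi^{n-k}$ commutes past the $(q,t)$-binomials as advertised and that the various products are correctly reindexed. Once the common prefactor is extracted on both sides, the corollary reduces to an elementary rational-function identity, so no new idea beyond Proposition~\ref{qthook} and the multinomial factorization~\eqref{multinomial-in-terms-of-binomials} is needed.
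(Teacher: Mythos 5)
Your proof is correct and uses essentially the same ingredients as the paper's: Proposition~\ref{qthook} to pull off the common prefactor $\qbin{n+m}{n-k}{q,t}\varphi^{n-k}$, the multinomial factorization \eqref{multinomial-in-terms-of-binomials}, the identities from Theorem~\ref{qthomo} and \eqref{qtelem}, and the key telescoping step $(t^{q^{m+k}}-t^{q^k})/(t^{q^k}-t)+1=(t^{q^{m+k}}-t)/(t^{q^k}-t)$. The only difference is organizational: you explicitly state the reduction to the case $n=k$ before verifying it, whereas the paper carries the prefactor along in a single chain of equalities and uses the same telescoping in place.
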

\begin{proof}
Apply Proposition~\ref{qthook} to the left side:
$$
\begin{aligned}
&\MacSchurZ_{(m,1^k)}(1,t,t^2,\ldots,t^n)+ \MacSchurZ_{(m+1,1^{k-1})}(1,t,t^2,\ldots,t^{n-1})\\
&= \qbin{n+m}{n-k}{q,t} \varphi^{n-k} \prod_{i=1}^k \frac{t^{q^{m+k}} - t^{q^i}} {t^{q^i} - t} 
+ \qbin{n+m}{n-k}{q,t} \varphi^{n-k} \prod_{i=1}^{k-1} \frac{t^{q^{m+k}} - t^{q^i}} {t^{q^i} - t} \\
&= \qbin{n+m}{n-k}{q,t} \varphi^{n-k} \left( \left( \frac{t^{q^{m+k}} - t^{q^k}}{t^{q^k} - t} + 1 \right) 
                            \prod_{i=1}^{k-1} \frac{t^{q^{m+k}} - t^{q^i}} {t^{q^i} - t} \right) \\
&= \qbin{n+m}{n-k}{q,t} \varphi^{n-k} \prod_{i=1}^k \frac{t^{q^{m+k}} - t^{q^{i-1}}} {t^{q^i} - t} \\
&= \qbin{n+m}{n-k}{q,t} \varphi^{n-k} \left( \qbin{m+k}{k}{q,t} 
        \prod_{i=1}^k \frac{t^{q^k} - t^{q^{i-1}}} {t^{q^i} - t} \right) \\
&= \qbin{n+m}{n}{q,t} \cdot \qbin{n}{n-k}{q,t} \varphi^{n-k} \prod_{i=1}^k \frac{t^{q^k} - t^{q^{i-1}}} {t^{q^i} - t}\\
&= \HZ_m(1,t,t^2,\ldots,t^n) \cdot \EZ_k(1,t,t^2,\ldots,t^{n-1}).
\end{aligned}
$$
The last equality used \eqref{qtelem}.
\end{proof}

\begin{theorem}
\label{hookhilb}
$$
\MacSchurZ_{(m,1^k)}(1,t,t^2,\ldots,t^n) = \qbin{m+n}{n-k}{q,t} r_{(1^k,m)}(q,t^{q^{n-k}}).
$$                                                                               
\end{theorem}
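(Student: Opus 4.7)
The plan has two stages: a reduction to the case $n=k$, followed by an induction on $k$ whose key algebraic content is an identity among ribbon numbers derived from the MacMahon determinant.

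First I would invoke Proposition~\ref{qthook}, which already expresses
$$\MacSchurZ_{(m,1^k)}(1,t,\ldots,t^n) = \qbin{n+m}{n-k}{q,t}\cdot \varphi^{n-k}\,\MacSchurZ_{(m,1^k)}(1,t,\ldots,t^k).$$
Since $\varphi^{n-k}$ sends $t\mapsto t^{q^{n-k}}$, the entire statement collapses to establishing the case $n=k$, namely $\MacSchurZ_{(m,1^k)}(1,t,\ldots,t^k)=r_{(1^k,m)}(q,t)$.

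The case $n=k$ I would prove by induction on $k$. The base $k=0$ is immediate: $\MacSchurZ_{(m)}(1)=\HZ_m(1)=1$, and the identity permutation is the unique element of $\Symm_m$ with descent composition $(m)$, so $r_{(m)}(q,t)=1$. For the inductive step, specialize Corollary~\ref{hookrec} at $n=k$ to obtain
$$\MacSchurZ_{(m,1^k)}(1,t,\ldots,t^k)+\MacSchurZ_{(m+1,1^{k-1})}(1,t,\ldots,t^{k-1}) = \HZ_m(1,t,\ldots,t^k)\cdot\EZ_k(1,t,\ldots,t^{k-1}).$$
The inductive hypothesis at level $k-1$ handles two of these terms: with $(m',k',n')=(m+1,k-1,k-1)$ it gives $\MacSchurZ_{(m+1,1^{k-1})}(1,t,\ldots,t^{k-1})=r_{(1^{k-1},m+1)}(q,t)$, and with $(m',k',n')=(1,k-1,k-1)$ it gives (noting $(1,1^{k-1})=(1^k)$) that $\EZ_k(1,t,\ldots,t^{k-1})=r_{(1^k)}(q,t)$. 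Combined with Theorem~\ref{qthomo} ($\HZ_m(1,t,\ldots,t^k)=\qbin{m+k}{k}{q,t}$), the inductive step then reduces to the purely algebraic identity
$$r_{(1^k,m)}(q,t)+r_{(1^{k-1},m+1)}(q,t)=\qbin{m+k}{k}{q,t}\cdot r_{(1^k)}(q,t).$$

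For this ribbon identity I would expand the MacMahon determinant from Proposition~\ref{Macmahon-determinant} along its last row. For $\alpha=(1^k,m)$ the partial sums are $(0,1,\ldots,k,k+m)$, so the matrix $M$ is $(k+1)\times(k+1)$ and its bottom row, whose entries are $\varphi^k\bigl(1/(\sigma_j-k)!_{q,t}\bigr)$, vanishes for $j<k$, equals $1$ at $j=k$, and equals $1/m!_{q,t^{q^k}}$ at $j=k+1$. Cofactor expansion yields
$$\det M = -\det M^{(k+1,k)} + \tfrac{1}{m!_{q,t^{q^k}}}\det M^{(k+1,k+1)}.$$
A direct inspection shows $M^{(k+1,k)}$ is the MacMahon matrix for $(1^{k-1},m+1)$ inside $\Symm_{k+m}$ and $M^{(k+1,k+1)}$ is the MacMahon matrix for $(1^k)$ inside $\Symm_k$; multiplying back by $(k+m)!_{q,t}$ and using $(k+m)!_{q,t}/(k!_{q,t}\,m!_{q,t^{q^k}})=\qbin{m+k}{k}{q,t}$ yields the desired identity.

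The main obstacle is really that last bookkeeping step: the two submatrices $M^{(k+1,k)}$ and $M^{(k+1,k+1)}$ come from ribbons living in different symmetric groups ($\Symm_{k+m}$ and $\Symm_k$), and one must carefully match partial sums and Frobenius powers to identify each submatrix with the appropriate MacMahon determinant. Once that identification is clear, the rest of the argument is assembly.
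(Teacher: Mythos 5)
Your proposal is correct, and the overall structure matches the paper's proof exactly: reduce to the case $n=k$ via the second equation in Proposition~\ref{qthook}, then induct on $k$ using Corollary~\ref{hookrec} and the base case $k=0$. The one place where you diverge is the verification of the ribbon recurrence
$$r_{(1^k,m)}(q,t)+r_{(1^{k-1},m+1)}(q,t)=\qbin{m+k}{k}{q,t}\,r_{(1^k)}(q,t).$$
You derive it by cofactor expansion of the MacMahon determinant from Proposition~\ref{Macmahon-determinant} along its last row, correctly identifying the two nonvanishing cofactors as the MacMahon matrices for $(1^{k-1},m+1)$ in $\Symm_{k+m}$ and $(1^k)$ in $\Symm_k$. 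The paper instead manipulates the inclusion-exclusion form $r_\alpha(q,t)=\sum_{\beta \text{ refined by }\alpha}(-1)^{\ell(\alpha)-\ell(\beta)}\qbin{n}{\beta}{q,t}$ directly, classifying the compositions $\beta$ refined by $(1^k,m)$ according to whether their last part equals $m$ (which, after stripping that part, gives $\qbin{m+k}{k}{q,t}\cdot r_{(1^k)}(q,t)$) or exceeds $m$ (which gives $-r_{(1^{k-1},m+1)}(q,t)$). These are really the same cancellation viewed from two angles: expanding along the bottom row of the MacMahon matrix is precisely sorting the $\beta$'s by the position of their penultimate partial sum. Your determinantal phrasing has the mild advantage of making the bookkeeping mechanical once the submatrices are identified, while the paper's is marginally more self-contained since it bypasses the determinantal form entirely and argues straight from the definition of $r_\alpha(q,t)$ as a signed multinomial sum.
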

\begin{proof}
By the second equation in Proposition~\ref{qthook}, it suffices to prove this
in the case $n=k$, that is,
\begin{equation}
\label{k-equal-n-hook-coincidence}
\MacSchurZ_{(m,1^k)}(1,t,t^2,\ldots,t^k) = r_{(1^k,m)}(q,t).
\end{equation}                    
Let $LHS(m,k), RHS(m,k)$ denote the left, right sides in \eqref{k-equal-n-hook-coincidence}.
We will show they are equal by induction on $k$; in the base case $k=0$
both are easily seen to equal $1$.

For the inductive step, note that Corollary \ref{hookrec} 
gives the following recurrence on $k$ for $LHS(m,k)$:
$$
LHS(m,k) = - LHS(m+1,k-1) + \qbin{m+k}{k}{q,t} LHS(1,k-1).
$$
To show $RHS(m,k)$ satisfies the same recurrence,
start with the summation expression for $RHS(m,k)$ given in Theorem \ref{hilbdet}:
$$
RHS(m,k) = \sum_{\beta \text{ refined by } (1^k, m)} (-1)^{k+1-\ell(\beta)} \qbin{m+k}{\beta}{q,t}.
$$
Classify the terms indexed by $\beta$ in this sum 
according to whether the composition $\beta$ ends in a last part strictly 
larger than $m$, or equal to $m$.  The former terms correspond
to compositions $\beta \text{ refined by } (1^{k-1}, m+1)$, and their sum gives rise to the 
desired first term $-RHS(m+1,k-1)$ in the
recurrence.  The latter terms correspond, by removing the last part of $\beta$ of size $m$, 
to compositions $\hat\beta$ refined by $1^k$, and
their sum is 
$$
\begin{aligned}
&\sum_{ \substack{\beta \text{ refined by }(1^k, m) \\ \text{ ending in }m} } 
            (-1)^{k+1-\ell(\beta)} \qbin{m+k}{\beta}{q,t} \\
 &\quad =  \qbin{m+k}{k}{q,t} \cdot \sum_{ \hat\beta \text{ refined by }1^k } (-1)^{k-\ell(\hat\beta)} 
               \qbin{k}{\hat\beta}{q,t} \\
 &\quad =\qbin{m+k}{k}{q,t} \cdot RHS(1,k-1),
\end{aligned}
$$
that is, the desired second term in the recurrence.
\end{proof}

\begin{remark}
Note that equation \eqref{k-equal-n-hook-coincidence}, together with Theorem~\ref{hilbdet},
gives the principal specialization $\MacSchurZ_{(m,1^k)}(1,t,t^2,\ldots,t^n)$ an algebraic
interpretation in the special case $n=k$.  However, this generalizes in a straightforward
fashion when $n \geq k$, as the same methods that prove Theorem~\ref{hilbdet}
can be used to prove the following.

Let $A:=\FF_q[x_1,\ldots,x_{m+n}]$ with its usual action of $G_{m+n}:=GL_{m+n}(\FF_q)$.
Given $\alpha$ a composition of $n$, consider the induced $\FF_q G_{m+n}$-module 
$$
\chi^{\alpha,m+n}_q := \Ind_{P_{(m,n)}}^{G_{m+n}} \chi^\alpha_q
$$
where one considers the homology representation $\chi^\alpha_q$ for
$GL_n(\FF_q)$ as also a representation for the parabolic subgroup $P_{(m,n)}$,
via the homomorphism $P_{(m,n)} \rightarrow GL_n(\FF_q)$ that ignores all
but the lower right $n \times n$ submatrix.

\begin{theorem}
\label{hilbdet-general}
In the above situation, the $A^{G_{m+n}}$-module
$$
M:= \Hom_{\FF_q G_{m+n}}(\chi^{\alpha,m+n}_q, A)
$$
is free over $A^{G_{m+n}}$, with
$$
\Hilb(M/A^{G_{m+n}}_+ M,t) 
= \qbin{m+n}{n-k}{q,t} r_{\alpha}(q,t^{q^{n-k}}).
$$
\end{theorem}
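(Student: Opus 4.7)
The plan is to transport the argument of Theorem~\ref{hilbdet} from $GL_n(\FF_q)$ up to $G_{m+n}$. Begin with the chain-contractible complex $0\to\chi^\alpha_q\to\mathcal{C}$ of $\FF_q GL_n(\FF_q)$-modules from Theorem~\ref{KM}, inflate it through the surjection $P_{(m,n)}\twoheadrightarrow GL_n(\FF_q)$, and then apply $\Ind_{P_{(m,n)}}^{G_{m+n}}(-)$. Both operations are additive, so Proposition~\ref{additive-functor-prop} implies that the resulting complex
$$0\to\chi^{\alpha,m+n}_q\to\Ind\mathcal{C}$$
of $\FF_q G_{m+n}$-modules remains chain-contractible. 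A typical term is obtained by inflating $1_{P_\beta}^{GL_n(\FF_q)}$ to $P_{(m,n)}$, yielding $1_{\tilde P_\beta}^{P_{(m,n)}}$ where $\tilde P_\beta$ is the preimage of $P_\beta$ under the projection; this preimage is precisely the parabolic $P_{(m,\beta)}\subset G_{m+n}$ of composition type $(m,\beta_1,\ldots,\beta_{\ell(\beta)})$, so transitivity of induction rewrites the term as $1_{P_{(m,\beta)}}^{G_{m+n}}$.

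Next, apply the contravariant additive functor $\Hom_{\FF_q G_{m+n}}(-,A)$; Proposition~\ref{additive-functor-prop} again delivers a chain-contractible complex $\mathcal{C}'\to M\to 0$ whose typical term is $\Hom_{\FF_q G_{m+n}}(1_{P_{(m,\beta)}}^{G_{m+n}},A)\cong A^{P_{(m,\beta)}}$ by Frobenius reciprocity. Each $A^{P_{(m,\beta)}}$ is free over $A^{G_{m+n}}$ by Corollary~\ref{free}, so $\mathcal{C}'$ is a free $A^{G_{m+n}}$-resolution of $M$. Tensoring over $A^{G_{m+n}}$ with $\FF_q$ preserves chain-contractibility (Proposition~\ref{additive-functor-prop} once more), which forces $\Tor^{A^{G_{m+n}}}_i(M,\FF_q)=0$ for $i>0$ and shows that $M$ itself is free over $A^{G_{m+n}}$.

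Freeness lets the Hilbert series be read off the resolution, and dividing by $\Hilb(A^{G_{m+n}},t)$ yields
$$\Hilb(M/A^{G_{m+n}}_+M,t)=\sum_{\beta\text{ refined by }\alpha}(-1)^{\ell(\alpha)-\ell(\beta)}\qbin{m+n}{(m,\beta)}{q,t}.$$
Apply the multinomial splitting \eqref{multinomial-in-terms-of-binomials}
$$\qbin{m+n}{(m,\beta)}{q,t}=\qbin{m+n}{m}{q,t}\cdot\varphi^m\qbin{n}{\beta}{q,t},$$
pull the $\beta$-independent factor out of the sum, and recognize the remaining alternating sum as the inclusion-exclusion expression for $r_\alpha(q,t)$ from Proposition~\ref{Macmahon-determinant}:
$$\Hilb(M/A^{G_{m+n}}_+M,t)=\qbin{m+n}{m}{q,t}\cdot\varphi^m r_\alpha(q,t)=\qbin{m+n}{m}{q,t}\,r_\alpha(q,t^{q^m}),$$
which is the claimed formula under the identification $n-k=m$ (consistent with the hook specialization of Theorem~\ref{hookhilb}).

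The main obstacle, such as it is, is the verification in Step~1 that the chain-contracting maps of Theorem~\ref{KM} survive inflation and induction. They were built as $GL_n(\FF_q)$-equivariant $\FF_q$-linear maps via a $B$-equivariant shelling averaged over $G/B$: inflation keeps them equivariant for $P_{(m,n)}$ automatically (any $GL_n(\FF_q)$-map pulls back through the projection), and induction preserves chain-contractibility formally because $\Ind$ is additive. Once this lifting is justified, the rest is a direct adaptation of the proof of Theorem~\ref{hilbdet}, with the only new combinatorial ingredient being the factorization \eqref{multinomial-in-terms-of-binomials} that isolates the $\qbin{m+n}{m}{q,t}$ prefactor.
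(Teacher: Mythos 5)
Your proposal is correct and carries out exactly the adaptation of Theorem~\ref{hilbdet} that the paper gestures at when it says ``the same methods that prove Theorem~\ref{hilbdet} can be used to prove the following.'' The chain of steps --- inflate the chain-contractible complex of Theorem~\ref{KM} through $P_{(m,n)}\twoheadrightarrow GL_n(\FF_q)$, induce to $G_{m+n}$, identify the preimage of $P_\beta$ as $P_{(m,\beta)}$ and use transitivity of induction, then apply $\Hom_{\FF_q G_{m+n}}(-,A)$ and the $\Tor$-vanishing argument --- is precisely how the paper's methods transfer, and each appeal to Proposition~\ref{additive-functor-prop}, Frobenius reciprocity, and Corollary~\ref{free} (applied with $m+n$ variables and the composition $(m,\beta)$) is justified. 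The final factorization via \eqref{multinomial-in-terms-of-binomials} is the right combinatorial step, and you were right to flag the appearance of $k$ in the theorem's stated formula: since $\alpha$ is an arbitrary composition of $n$, the symbol $k$ is undefined there, and your computed answer $\qbin{m+n}{m}{q,t}\,r_\alpha(q,t^{q^m})$ is the correct general form, with the paper's ``$n-k$'' matching ``$m$'' only after one specializes to the hook $\alpha=(1^k,m)$, where $n=k+m$.
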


\noindent
When $\alpha=(1^k,m)$, the right side above is
$\MacSchurZ_{(m,1^k)}(1,t,t^2,\ldots,t^n)$, by Theorem~\ref{hookhilb}.

\end{remark}

\section{Questions}
\label{questions-section}

%
%
%
%
\subsection{Bases for the quotient rings and Schubert calculus}
\label{quotient-basis-question}

Is there a simple explicit basis one can write down for $\FF_q[\xx]^{P_\alpha}/(\FF_q[\xx]_+^G)$?
By analogy to Schubert polynomial theory, it would be desirable to have
a basis when $\alpha=1^n$, containing the basis for 
any other $\alpha$ as a subset.

\subsection{The meaning of the principal specializations}
\label{principal-specialization-question}

What is the algebraic (representation-theoretic, Hilbert series?) meaning of 
$\MacSchurZ_{\lambda/\mu}(1,t,\ldots,t^n)$, or perhaps just the non-skew special case where
$\mu=\varnothing$? Is there an algebraic meaning to 
the elements $\MacSchurZ_{\lambda/\mu}(x_1,x_2,\ldots,x_n)$ lying in $\Qq[\xx]$?

\subsection{A $(q,t)$ version of the fake degrees?}
\label{qt-fake-degree-question}

The sum appearing in \eqref{skew-fake-degree}
is a skew generalization of the usual {\it fake-degree} polynomial
\begin{equation}
\label{q-hook-formula}
f^{\lambda}(t) = \sum_{Q} t^{\maj(Q)} 
= q^{b(\lambda)} \frac{[n]!_q}{\prod_{x} [h(x)]_q}
\end{equation}
where $b(\lambda)=\sum_{i \geq 1} \binom{\lambda'_i}{2}$,
and $h(x)$ is the {\it hook length} of the cell $x$ of $\lambda$;
see \cite[Corollary 7.12.5]{Stanley-EC2}.  The fake degree polynomial
$f^{\lambda}(q)$ has a different meaning when $q$ is a prime power, giving
the dimension of the complex {\it unipotent representations}
of $GL_n(\FF_q)$ considered originally by Steinberg \cite{Steinberg}; see \cite{James, Olsson}.

Is there a $(q,t)$-fake degree polynomial $f^{\lambda/\mu}(q,t) \in \Qq(\ttt)$
having any or all of the following properties (a)-(f)?:

\begin{enumerate}
\item[(a)] $\lim_{t \rightarrow 1} f^{\lambda/\mu}(q,t) = f^{\lambda/\mu}(q)$.
\item[(b)] $\lim_{q \rightarrow 1} f^{\lambda/\mu}(q,t^{\frac{1}{q-1}}) = f^{\lambda/\mu}(t)$.
\item[(c)] Better yet, a summation formula generalizing \eqref{skew-fake-degree} of the form
$$
f^{\lambda/\mu}(q,t) = \sum_{Q} \wt(Q;q,t)
$$
where $Q$ runs over all standard Young tableau of shape $\lambda/\mu$.
Here $\wt(Q;q,t)$ should be an element of $\Qq(\ttt)$ with a product formula that
shows it is a polynomial in $t$ with nonnegative coefficients
for integers $q \geq 2$, and that
$$
\lim_{t \rightarrow 1} \wt(Q;q,t) = q^{\maj(Q)} \text{ and }
\lim_{q \rightarrow 1} \wt(Q;q,t^{\frac{1}{q-1}}) = t^{\maj(Q)}.
$$
(Note that property (c) would imply properties (a), (b)).
\item[(d)] A Hilbert series interpretation $q$-analogous to \eqref{skew-fake-degree} of the form
\begin{equation}
\label{fake-degree-as-hilb-numerator}
\begin{aligned}
\Hilb( M , t)&=\frac{ f^{\lambda/\mu}(q,t) }{(1-t^{q^n-1})(1-t^{q^n-q}) \cdots(1-t^{q^n-q^{n-1}}) } \\
&= f^{\lambda/\mu}(q,t) \cdot \Hilb( K[\xx]^G, t)  
\end{aligned}
\end{equation}
Here $M$ should be a graded $K[\xx]^G$-module (not necessarily free),
where $K$ is some extension field of $\FF_q$ and $G=GL_n(\FF_q)$ acts in
the usual way on $K[\xx]:=K[x_1,\ldots,x_n]$.
Equivalently this would mean that 
$$
f^{\lambda/\mu}(q,t) = \sum_{ i \geq 0 }(-1)^i \Hilb( \Tor^{K[\xx]^G}_i(M,K), t).
$$
\item[(e)] A reinterpretation of the power series in \eqref{fake-degree-as-hilb-numerator}
as a principal specialization of some symmetric function in an infinite variable set,
generalizing \eqref{skew-fake-degree}.

\item[(f)]  When $\mu = \varnothing$, a product {\it $(q,t)$-hook length formula} for 
$f^{\lambda}(q,t)$ generalizing \eqref{q-hook-formula}.
\end{enumerate}

When $\lambda=(m,1^k)$ is a hook shape, one can 
define $f^{\lambda}(q,t):=r_\alpha(q,t)$ for $\alpha=(1^k,m)$.  Then our previous
results on $r_\alpha(q,t)$ can be loosely re-interpreted as verifying properties (a),(b),(c),(d),(f)
(but not, as far as we know, (e)). 
Here the desired module $M$ is $\Hom_{\FF_q G}(\chi^\alpha_q,\FF_q[\xx])$, where
$\chi^\alpha_q$ was the homology representation on the $\alpha$-type-selected subcomplex of the
Tits building $\Delta(G,B)$ considered in Section~\ref{building-section}.  
It is known that this homology representation $\chi^\alpha_q$ is
an integral lift of the complex unipotent character considered by Steinberg in this case.

\subsection{An equicharacteristic $q$-Specht module for $\FF_q GL_n(\FF_q)$?}
\label{q-Specht-section}

The discussion in Section~\ref{qt-fake-degree-question} and Remark~\ref{cell-repn-remark}
perhaps suggests the existence of a generalization of the 
$\FF_q G$-module $\chi^\alpha_q$ for $G=GL_n(\FF_q)$
from ribbon shapes $\alpha$ to {\it all} skew shapes $\lambda/\mu$.

\begin{question}
\label{overarching-question}
Can one find a field extension $K$ of $\FF_q$
and a $K G$-module $\chi^{\lambda/\mu}_q$ 
which is a $q$-analogue of the skew Specht module $\chi^{\lambda/\mu}$ for $\Symm_n$,
generalizing the homology representation $\chi_q^\alpha = \chi^{\lambda/\mu}_q$ when 
$\alpha$ is a ribbon skew shape as in Remark~\ref{cell-repn-remark},
and playing the following three roles?
\end{question}

\vskip .1 in
\noindent
{\sf Role 1.}
Let $\lambda/\mu$ be an arbitrary skew shape.
It is known from work of James and Peel \cite{JamesPeel}
that any skew Specht modules $\chi^{\lambda/\mu}$ for $W=\Symm_n$ 
has a characteristic-free {\it Specht series}, that is,
a filtration in which each factor is isomorphic to a {\it non-skew}
Specht module $\chi^{\nu}$, and where the number of factors
isomorphic to $\chi^{\nu}$ is equal to the Littlewood-Richardson
number $c^{\nu}_{\lambda/\mu} = c^{\lambda}_{\mu,\nu}$.

\begin{question} 
Does the hypothesized $q$-skew Specht $KG$-module $\chi^{\lambda/\mu}_q$ from 
Question~\ref{overarching-question} have a $K G$-module filtration 
in which each factor is isomorphic to one of the non-skew 
$q$-Specht modules $\chi^{\nu}_q$, and where the number of factors
isomorphic to $\chi^{\nu}_q$ is equal to the Littlewood-Richardson
number $c^{\nu}_{\lambda/\mu}$?
\end{question}

%
%

\noindent
In particular, this would force $\dim_K \left( \chi_q^{\lambda/\mu} \right) = f^{\lambda/\mu}(q)$,
and would answer a question asked by Bj\"orner \cite[\S6, p. 207]{Bjorner}.
It suggests that perhaps there is a construction
of such a $\chi^{\lambda}_q$ in the spirit of the {\it cross-characteristic} 
$q$-analogue of Specht modules defined by James \cite{James}, which
also has dimension given by $f^{\lambda}(q)$.

\vskip .1 in
\noindent
{\sf Role 2.} 
Let $\lambda/\mu$ be an arbitrary skew shape.

\begin{question}
Does the hypothesized module $\chi^{\lambda/\mu}_q$ from Question~\ref{overarching-question} 
allow one to define the module
\begin{equation}
\label{module-via-q-Specht}
M:=\Hom_{\FF_q G}(\chi^{\lambda/\mu}_q,\FF_q[\xx])
\end{equation}
giving a definition for the $(q,t)$-fake degree $f^{\lambda/\mu}(q,t)$
as in part (d) of Section~\ref{qt-fake-degree-question}?
\end{question}


\vskip .1 in
\noindent
{\sf Role 3.}
Let $\lambda/\mu$ be an arbitary skew shape.
It follows from Schur-Weyl duality that one can 
re-interpret the usual Schur function principal specialization
as a Hilbert series in the following way:
\begin{equation}
\label{hilb-prin-spec}
s_{\lambda/\mu}(1,t,\ldots,t^{N-1}) := 
\Hilb\left( 
  \Hom_{\CC W}(\chi^{\lambda/\mu}, V^{\otimes n})
,t \right).
\end{equation}
Here $V=\CC^N$ is viewed as a graded vector space having
basis elements in degrees $(0,1,2,\ldots,N-1)$, inducing a grading
on the $n$-fold tensor space $V^{\otimes n}$, and 
$W=\Symm_n$ acts on $V^{\otimes n}$ by permuting tensor positions.

\begin{question}
Does the hypothesized $KG$-module $\chi^{\lambda/\mu}_q$ from 
Question~\ref{overarching-question}, playing the role of $\chi^{\lambda/\mu}$ in 
\eqref{hilb-prin-spec},  have a hypothesized accompanying
graded $KG$-module $V(N,n,q)$, playing the role of
$V^{\otimes n}$ in \eqref{hilb-prin-spec}, so that
$$
\MacSchurZ_{\lambda/\mu}(1,t,\ldots,t^{N-1}) := 
\Hilb\left( 
  \Hom_{K G}(\chi^{\lambda/\mu}_q, V(N,n,q))
,t \right) ?
$$
\end{question}

\section*{Acknowledgements}
The authors thank Peter Webb for helpful conversations, and thank
two anonymous referees for their suggestions.


\begin{thebibliography}{70}

\bibitem{Benson}
D.J. Benson, 
Polynomial invariants of finite groups. 
{\it London Mathematical Society Lecture Note Series} {\bf 190}.
Cambridge University Press, Cambridge, 1993.

\bibitem{Bjorner}
A. Bj\"orner,
Some combinatorial and algebraic properties of Coxeter complexes and Tits buildings.
{\it Adv. in Math.} {\bf 52} (1984), no. 3, 173--212. 

\bibitem{BRSW}
A. Broer, V. Reiner, L. Smith, and P. Webb,
Extending the coinvariant theorems of Chevalley-Shephard-Todd and Springer, in preparation.

\bibitem{FoataSchutzenberger}
D. Foata and M.-P. Sch\"utzenberger,
Major index and inversion number of permutations. {\it Math. Nachr.} {\bf 83}  (1978), 143--159.


\bibitem{Hewett}
T.J. Hewett, 
Modular invariant theory of parabolic subgroups of ${\rm GL}\sb n(  F\sb q)$ and 
the associated Steenrod modules.  
{\it Duke Math. J.}  {\bf 82}  (1996),  no. 1, 91--102.

\bibitem{James}
G.D. James, 
Representations of general linear groups. 
{\it London Mathematical Society Lecture Note Series} {\bf 94}. 
Cambridge University Press, Cambridge, 1984. 

\bibitem{JamesPeel}
G.D. James and M.H. Peel,
Specht series for skew representations of symmetric groups.
{\it J. Algebra} {\bf 56} (1979), 343--364. 

\bibitem{KacCheung}
V. Kac and P. Cheung,
Quantum calculus.
{\it Universitext.} Springer-Verlag, New York, 2002.

\bibitem{KuhnMitchell}
N. Kuhn and S. Mitchell,
The multiplicity of the Steinberg representation of ${\rm GL}\sb n F\sb q$ in the symmetric algebra.
{\it Proc. Amer. Math. Soc.} {\bf 96} (1986), no. 1, 1--6.

\bibitem{Macdonald-book}
I.G. Macdonald,
Symmetric functions and Hall polynomials. Second edition. 
With contributions by A. Zelevinsky. 
Oxford Mathematical Monographs. Oxford Science Publications. 
The Clarendon Press, Oxford University Press, New York, 1995. 

\bibitem{Macdonald}
I.G. Macdonald, 
Schur functions: theme and variations. 
{\it S\'eminaire Lotharingien de Combinatoire} (Saint-Nabor, 1992), 5--39,
{\it Publ. Inst. Rech. Math. Av.} {\bf 498}, Univ. Louis Pasteur, Strasbourg, 1992.

\bibitem{Mathas}
A. Mathas, 
A $q$-analogue of the Coxeter complex.  
{\it J. Algebra}  {\bf 164}  (1994),  no. 3, 831--848.

\bibitem{Mui}
H. Mui,
Modular invariant theory and cohomology algebras of symmetric groups.  
{\it J. Fac. Sci. Univ. Tokyo Sect. IA Math.}  {\bf 22}  (1975), no. 3, 319--369


\bibitem{Olsson}
J.B. Olsson,  
On the blocks of $GL(n,q)$. I.  
{\it Trans. Amer. Math. Soc.}  {\bf 222}  (1976), 143--156.


\bibitem{RSW}
V.~Reiner, D.~Stanton, and D.~White,
The cyclic sieving phenomenon.
{\it J. Combin. Theory Ser. A} {\bf 108}  (2004), 17--50.


\bibitem{Roichman}
Y. Roichman, 
On permutation statistics and Hecke algebra characters.  
Combinatorial methods in representation theory (Kyoto, 1998),  287--304, 
{\it Adv. Stud. Pure Math.} {\bf 28}, Kinokuniya, Tokyo, 2000.

\bibitem{Smith}
S.D. Smith, 
On decomposition of modular representations from Cohen-Macaulay geometries.
{\it J. Algebra}  {\bf 131}  (1990),  no. 2, 598--625.

\bibitem{Solomon}
L. Solomon,
A decomposition of the group algebra of a finite Coxeter group.  
{\it J. Algebra}  {\bf 9} (1968), 220--239.

\bibitem{Springer}
T.A. Springer,
Regular elements of finite reflection groups.
{\it Invent. Math.} {\bf 25} (1974), 159--198.

\bibitem{Stanley-invariants}
R.P. Stanley,
Invariants of finite groups and their applications to combinatorics.
{\it Bull. Amer. Math. Soc. (N.S.)} {\bf 1} (1979), 475--511.

\bibitem{Stanley-EC1} 
R.~P.~Stanley, 
Enumerative Combinatorics, Volume 1.
{\it Cambridge Studies in Advanced Mathematics} {\bf 49}. 
Cambridge University Press, Cambridge, 1997.

\bibitem{Stanley-EC2} 
R.~P.~Stanley, 
Enumerative Combinatorics, Volume 2.
{\it Cambridge Studies in Advanced Mathematics} {\bf 62}. 
Cambridge University Press, Cambridge, 1999.


\bibitem{Steinberg}
R. Steinberg, 
A geometric approach to the representations of the 
full linear group over a Galois field.  
{\it Trans. Amer. Math. Soc.}  {\bf 71} (1951), 274--282. 


\end{thebibliography}
\end{document}